\providecommand{\U}[1]{\protect\rule{.1in}{.1in}}
\newtheorem{rem}{Remark}
\newtheorem{theo}{Theorem}
\newtheorem{lemma}{Lemma}
\newtheorem{example}{Example}
\newtheorem{algorithm}{Algorithm}
\newtheorem{procedure}{Procedure}
\newtheorem{corollary}{Corollary}
\newenvironment{hypA}[1]{\begin{itemize}
\item[({\bf A#1})]}{\end{itemize}}
\newcounter{hypB}
\begin{document}

\begin{frontmatter}

\title{Parameter Estimation for Hidden Markov Models with Intractable Likelihoods}
\runtitle{Estimation of Intractable HMMs}

\begin{aug}
  \author{\fnms{Thomas. A.}  \snm{Dea$\textrm{n}^{1,}$}\corref{}\thanksref{t2}\ead[label=e1]{tad36@cam.ac.uk}},
  \author{\fnms{Sumeetpal S.} \snm{Sing$\textrm{h}^{1,}$}\thanksref{t2}\ead[label=e2]{sss40@cam.ac.uk}},
  \author{\fnms{Ajay} \snm{Jasra}\ead[label=e3]{ajay.jasra@ic.ac.uk}}
  \and
  \author{\fnms{Gareth w.}  \snm{Peters} \ead[label=e4]{garethpeters@unsw.edu.au}}

  \thankstext{t2}{T.A. Dean and S.S. Singh's research is funded by the Engineering and Physical Sciences Research Council (EP/G037590/1) whose support is gratefully acknowledged. 
This}

  \runauthor{Dean et al.}

  \affiliation{University of Cambridg$\textrm{e}^1$, Imperial College London and University of New South Wales}

  \address{T.A.Dean, S.S. Singh, \\
  Department of Engineering, \\
  University of Cambridge, \\
  Cambridge, \\
  CB2 1PZ, UK\\
          \printead{e1,e2}}

  \address{A. Jasra, \\
  Department of Mathematics, \\
  Imperial College London, \\
  London, \\
  SW7 2AZ, UK \\
          \printead{e3}}

   \address{G.W. Peters, \\
  School of Mathematics and Statistics, \\
  University of New South Wales, \\
  Sydney, \\
  2052, Aus \\
          \printead{e4}}

\end{aug}

\begin{abstract}
Approximate Bayesian computation (ABC) is a popular technique for approximating likelihoods and is often used in parameter estimation when the likelihood functions are analytically intractable.  Although
the use of ABC is widespread in many fields, there has been little investigation of the theoretical properties of the resulting estimators.  In this paper we give a
theoretical analysis of the asymptotic properties of ABC based maximum likelihood parameter estimation for hidden Markov models.  In particular, we derive results analogous to those
of consistency and asymptotic normality for standard maximum likelihood estimation.
We also discuss how Sequential Monte Carlo methods provide a natural
method for implementing likelihood based ABC procedures.
\end{abstract}

\begin{keyword}[class=AMS]
\kwd[Primary ]{62M09}
\kwd[; secondary ]{62B99}
\kwd{62F12}
\kwd{65C05}
\end{keyword}

\begin{keyword}
\kwd{Parameter Estimation}
\kwd{Hidden Markov Model}
\kwd{Maximum Likelihood}
\kwd{Approximate Bayesian Computation}
\kwd{Sequential Monte Carlo}
\end{keyword}

\end{frontmatter}

\bigskip


\thispagestyle{plain}

\section{Introduction}

\let\thefootnote\relax\footnotetext{First version: Cambridge University Engineering Department Technical Report 660, 1 October 2010} 

The hidden Markov model (HMM) is an important statistical model in many fields including
Bioinformatics (e.g.~\cite{dureddkromit1998}), Econometrics (e.g.~\cite{kimshechi1998}) and Population genetics (e.g.~\cite{felchu1996});
see also \cite{caprydmou2005} for a recent overview.   Often one has a range of HMMs parameterised by a parameter vector $\theta$ taking values in some compact subset $\Theta$ of Euclidian space.  Given a sequence of
observations $\hat{Y}_{1} , \ldots, \hat{Y}_{n}$ the objective is to find the parameter
vector $\theta^{\ast} \in\Theta$ that corresponds to the particular HMM from which the
data were generated.

A common approach to estimating $\theta^{\ast}$ is maximum likelihood
estimation (MLE). The parameter estimate, denoted $\hat{\theta}_{n}$, is obtained via
maximizing the log-likelihood of the observations:
\[
\hat{\theta}_{n} = {\arg\max}_{\theta\in\Theta} l_{n}(\theta)
\]
where
\[
\label{eq:ll}l_{n}(\theta) :=  \log p_{\theta} \left(  \hat{Y}_{1} ,
\ldots, \hat{Y}_{n} \right)  = \sum\limits_{i=1}^{n}\log p_{\theta}( \hat{Y}_{i}
\vert \hat{Y}_{1} , \ldots, \hat{Y}_{i-1}) .
\]
Unless the model is simple, e.g. linear Gaussian or when $\mathcal{X}$ is a finite set, one can seldom evaluate the likelihood analytically. There
are a variety of techniques, for example sequential Monte Carlo (SMC), for numerically estimating the likelihood. However, in a wide range of applications these
methods cannot be used, for example when the conditional
density of the observed state of the HMM given the hidden state is intractable, by which we mean that this density 
cannot be evaluated analytically and has no unbiased Monte Carlo estimator.  Despite this, one is often
still able to generate samples from the corresponding processes for different values of the parameter $\theta$ (e.g.~\cite{jassinmarmcc2010}).
This has led to the development of methods in which $\theta^{\ast}$ is
estimated by taking the value of $\theta$ which maximizes some principled
approximation of the likelihood which is itself
estimated using Monte Carlo simulation.

One such approach is the convolution particle filter of \cite{camros2009}. Another technique which can be applied to this class of problems is
indirect inference; see \cite{goumonren1993} and \cite{hegfri2004}. However in the context of HMMs, when one does not adopt a linear
Gaussian approximation of the filtering density (which can be very inaccurate,
as in extended Kalman filter approximations), this method is likely to be very
expensive. A third method which has recently
received a great deal of attention is approximate Bayesian computation (ABC).  A non-exhaustive list of references includes:
\cite{mckcoodea2009,petwutshe2010,priseiperfel1999,ratandwiuric2009,tavbalgridon1997}. See also \cite{sisfan2010} for a review
on computational methodology.

In the standard ABC approach (omitting for the moment the possible use of summary statistics) one assumes that a data set $\hat{Y}_{1} , \ldots , \hat{Y}_{n}$ is
given and approximates the likelihood function $p_{\theta} \left( \hat{Y}_{1} , \ldots , \hat{Y}_{n} \right)  $ via probabilities of the form
\begin{equation}
\label{nonHMMABCdef}\mathbb{P}_{\theta} \left(  d \left(  Y_{1} , \ldots,
Y_{n} ; \hat{Y}_{1} , \ldots , \hat{Y}_{n} \right)  \leq\epsilon\right)
\end{equation}
where $\left\{ Y_{k} \right\}_{k \geq 1}$ denotes the observed state of the HMM, $d( \cdot; \cdot)$ is some suitable metric on the n-fold product space
$\mathbb{R}^{m} \times\cdots\times\mathbb{R}^{m}$ and $\epsilon> 0$ is a
constant which reflects the accuracy of the approximation.   In practice these probabilities are themselves estimated
using Monte Carlo techniques.  

The intuitive justification for the ABC approximation is that for sufficiently small
$\epsilon$
\[
\frac{ \mathbb{P}_{\theta} \left(  d \left(  Y_{1} , \ldots, Y_{n} ; \hat{Y}_{1} , \ldots , \hat{Y}_{n} \right)  \leq\epsilon\right)  }{ V^{\epsilon}_{\hat{Y}_{1} , \ldots , \hat{Y}_{n}} }
\approx  p_{\theta} \left(  \hat{Y}_{1} , \ldots , \hat{Y}_{n} \right)
\]
where $V^{\epsilon}_{\hat{Y}_{1} , \ldots , \hat{Y}_{n}}$ denotes the volume of the $d$-ball of radius $\epsilon$ around the points $\hat{Y}_{1} , \ldots , \hat{Y}_{n}$.  Thus the probabilities \eqref{nonHMMABCdef} will provide a good
approximation to the likelihood, up to the value of some
renormalising factor which is independent of $\theta$ and hence can be ignored.  However in general  it is not at all clear in what sense an approximation to the likelihood must be `good' in order for the resulting inference procedures to be well behaved.  The purpose of this paper is to resolve this issue by directly investigating the effect of the parameter $\epsilon$, not on the quality of the approximations \eqref{nonHMMABCdef}, but on the behaviour of the resulting ABC based parameter estimators.

We note that in \eqref{nonHMMABCdef} we have implicitly assumed that one is working with the entire data set rather than a summary statistic of it as is usually done in practice, especially when the observations $\left\{ Y_{k} \right\}_{k \geq 0}$ take values in some high dimensional space.  For ease of exposition we shall persist with this assumption throughout the rest of the paper, noting where appropriate the conditions under which the results we derive will continue to hold when summary statistics are used (see in particular the remarks at the ends of Sections \ref{sec:standABC} and \ref{sec:noisyABC}).

\subsection{Contribution and Structure}

In this paper we investigate the behaviour of ABC when used to estimate the parameters of HMMs for which the conditional densities of the observations given the hidden state are intractable.  We shall use a specialization, first proposed in \cite{jassinmarmcc2010}, of the standard ABC likelihood approximation \eqref{nonHMMABCdef}  for when the observations are generated by a HMM.  Specifically we approximate the likelihood of a given sequence of observations $\hat{Y}_{1} , \ldots , \hat{Y}_{n}$ from a HMM with the probability
\begin{equation}
\mathbb{P}_{\theta} \left(  Y_{1} \in B^{\epsilon}_{\hat{Y}_{1}} , \ldots, Y_{n} \in
B^{\epsilon}_{\hat{Y}_{n}} \right)  \, \label{introABCllh}%
\end{equation}
where $B^{\epsilon}_{y}$ denotes the ball of radius $\epsilon$ centered around
the point $y$. The benefit of this approach is that it retains the Markovian
structure of the model. This facilitates both simpler Markov chain Monte Carlo (MCMC) (e.g.~\cite{mckcoodea2009}) and sequential Monte Carlo (SMC) (e.g.~\cite{jassinmarmcc2010}) implementation of the ABC
approximation. Furthermore our experience suggests that this approximation is competitive, from an
accuracy perspective, with a wide range of competing methods; see the two
afore mentioned references for a deeper discussion of this point.

One could use the approximate likelihoods \eqref{introABCllh}  to estimate the parameters of a HMM in one of two ways.  Firstly one could take a Bayesian approach and use \eqref{introABCllh} to construct an approximation to the posterior.  This is the approach most commonly taken in the literature.  Alternatively, as we shall do in this paper, one could take a frequentist approach and  estimate the parameters of the HMM with the value of the parameter vector which maximizes the corresponding approximate likelihood \eqref{introABCllh} of the observations.  We shall henceforth term this procedure approximate Bayesian computation maximum likelihood estimation (ABC MLE).

Although the use of ABC has become commonplace there has to date
been little investigation of the theoretical properties of its use in parameter estimation in either the Bayesian or frequentist context.  In particular the following questions remain to be answered.  Is ABC MLE consistent?  Do ABC based posterior distributions concentrate around the true value of the parameter vector?  Indeed do ABC based estimators converge to anything at all?  Although these questions may seem abstract it is well known that even the mighty MLE can fail to converge in practice, see \cite{fer1982}.  Thus before ABC can be placed on firm mathematical foundations the questions raised above need  to be addressed.

The purpose of this paper is to bridge this theoretical gap in the context of maximum likelihood estimation.  In particular we develop a theoretical justification of the ABC MLE procedure based on its large sample properties analogous to that provided for MLE by standard results concerning asymptotic consistency and normality.  Our approach to this problem is based on the novel observation that ABC MLE can be considered as performing MLE using the likelihoods of a collection of perturbed HMMs. This implies that the ABC MLE
should in some sense inherit its behaviour from the standard MLE.  Using this
observation we first show that unlike the MLE, which is
asymptotically consistent, the ABC MLE estimator has an innate asymptotic bias.  Secondly we show that this bias can be made arbitrarily small by choosing sufficiently small values of $\epsilon$.  Together these results show that asymptotically the ABC MLE will converge to the true parameter value with a margin of error which can be made arbitrarily small by taking a suitable choice of $\epsilon$.  Thus our results allow us to develop a rigorous formulation of the intuitive justification of ABC and in doing so to provide a firm mathematical basis for performing ABC based inference.

We complete the picture by analysing the so called noisy variant (see e.g.~\cite{feapra2010}) of ABC MLE.  We show that unlike the ABC MLE the noisy ABC MLE is always asymptotically consistent.  This raises the question: does noisy ABC provide us with a `free pass' when performing parameter estimation?  Unfortunately the answer in general is no.   We show that under
reasonable conditions the Fisher information of the noisy ABC MLE is
strictly less than that of the standard MLE. As a result we show that the
noisy ABC suffers from a relative loss of information and hence statistical efficiency.

As part of these investigations we establish a
novel asymptotic missing information principle for HMMs with observations
perturbed by additive uniform noise which may in itself be of independent
interest to the reader.  Finally we remark that although this study is theoretical it is our belief that the results presented herein will help provide guidance for future methodological developments in the field.

This paper is structured as follows. In Section \ref{sec:notassump} the
notation and assumptions are given. In Section \ref{sec:standABC} we establish
some approximate asymptotic consistency type results for the standard ABC MLE.
In Section \ref{sec:noisyABC} results concerning the asymptotic consistency
and normality of the noisy ABC estimator are presented.  An extension of the ABC method using probability kernels is discussed in Section \ref{secExtensionToKernels} and an overview of the
use of SMC methods to provide a practical way of implementing
ABC is presented in Section \ref{sec:simos}. An example is given in
Section \ref{sec:numex} which provides a qualitative demonstration of the behaviour
of the ABC estimator predicted in Sections \ref{sec:standABC} and
\ref{sec:noisyABC}. The article is summarized in Section \ref{sec:summary}.
Supporting technical lemmas and proofs of some of the theoretical results are housed
in the two appendices.

\section{Notation and Assumptions}

\label{sec:notassump}

\subsection{Notation and Main Assumptions} \label{subsecNotandassumptions}

Throughout this paper we shall use lower case letters $x,y,z$ to denote dummy variables and upper case letters $X,Y,Z$ to denote random variables.  Observations of a random variable will be denoted by $\hat{Y}$.

We shall frequently have to refer to various kinds of both finite, infinite and doubly infinite sequences.  For brevity the following shorthand notations are used.  For any pair of integers $k \leq n$, $Y_{k:n}$ denotes the sequence of random variables $Y_{k} , \ldots ,Y_{n}$; $Y_{-\infty:k}$ denotes the sequence $\ldots ,Y_{k}$; $Y_{n:\infty}$ denotes the sequence $Y_{n} , \ldots $ and $Y_{-\infty:k;n:\infty}$ denotes the sequence $\ldots, Y_{k} ; Y_{n}, \ldots$.  Given a sequence of integers $\ldots , j_{-1} , j_{0} , j_{1} , \ldots$ and indicies $r < s$ we shall let $j_{r:s}$ denote $j_{r} , j_{r+1}, \ldots , j_{s-1}, j_{s}$; $j_{-\infty:r}$ denote $\ldots , j_{r-1}, j_{r}$ and $j_{s:\infty}$ denote $j_{s}, j_{s+1} , \ldots$ respectively.  Further we shall also use $j_{-\infty:\infty}$ to denote the full sequence $\ldots , j_{-1} , j_{0} , j_{1} , \ldots$.
The two notations defined above will be combined in the following manner.  Given a doubly infinite sequence of random variables $\ldots , Y_{-1}, Y_{0} , Y_{1}, \ldots$, a doubly infinite sequence of integers $\ldots , j_{-1} , j_{0} , j_{1} , \ldots$ and indicies $r < s$ we shall let $Y_{j_{r:s}}$ denote the sequence $Y_{j_{r}} , Y_{j_{r+1}}, \ldots , Y_{j_{s-1}} , Y_{j_{s}}$.  The sequences $Y_{j_{-\infty:r}}$, $Y_{j_{s:\infty}}$ and $Y_{j_{-\infty:\infty}}$ are defined analogously.  Lastly given a measure $\mu$ on a Polish space $\mathcal{X}$ we let $\int \cdot \, \mu(d x_{1:n})$ denote integration w.r.t. the n-fold product measure $\mu^{\otimes n}$ on the n-fold product space $\mathcal{X}^{n}$.  Moreover, given a function $f(x_{1} , \ldots , x_{n}): \mathcal{X}^{n} \to \mathbb{R}$ and integers $1 \leq k \leq l \leq n$, we shall let $\int_{\mathcal{X}^{n}} f ( \cdot ) \mu ( d x_{1:k;l:n} )$ denote the partial integrals $\int_{\mathcal{X}^{n}} f ( \cdot ) \mu ( d x_{1} )$ $\cdots \mu ( d x_{k} ) \mu ( d x_{l} ) \cdots \mu ( d x_{n} )$.

The essence of our approach is to show that in some sense the ABC MLE
inherits the properties of the standard MLE. Thus we shall operate under assumptions on the HMMs that are sufficient to ensure asymptotic consistency and
normality of the MLE.

It is assumed that the Markov chain $\left\{  X_{k} \right\}_{k\geq0}$
is time-homogenous and takes values in a compact Polish space $\mathcal{X}$
with associated Borel $\sigma$-field $\mathcal{B} \left(  \mathcal{X} \right)
$.  Throughout it will be assumed that we have a collection of HMMs all defined on the same state space and parametrised by some vector $\theta$ taking values in a compact set $\Theta \in \mathbb{R}^{d}$.  Furthermore we shall reserve $\theta^{\ast}$ to denote the `true' value of the parameter vector.  For each $\theta\in\Theta$ we let $Q_{\theta} \left(  x , \cdot\right)  $
denote the transition kernel of the corresponding Markov chain and for each
$x\in\mathcal{X}$ and $\theta\in\Theta$ we assume that $Q_{\theta} \left(  x ,
\cdot\right)  $ has a density $q_{\theta} \left(  x , \cdot\right)  $
w.r.t.~some common finite dominating measure $\mu$ on $\mathcal{X}$. The
initial distribution of the hidden state will be denoted by $\pi_{0}$.

We also assume that the observations $\left\{  Y_{k} \right\}_{k\geq0}$ take values in a
state space $\mathcal{Y} \subset\mathbb{R}^{m}$ for some $m\geq1$.
Furthermore, for each $k$ we assume that the random variable $Y_{k}$ is conditionally independent of $
X_{-\infty:k-1;k+1:\infty}$ and $Y_{-\infty:k-1;k+1:\infty}$ given $X_{k}$ and that
the conditional laws have densities $g_{\theta} \left(  y \vert x \right)  $
w.r.t.~some common finite dominating measure $\nu$. We further assume that for
every $\theta$ the joint chain $\left\{  X_{k} , Y_{k} \right\}  _{k \geq0}$
is positive Harris recurrent and has a unique invariant distribution
$\pi_{\theta}$. We shall write $\overline{\mathbb{P}}_{\theta}$ to denote the laws
of the corresponding stationary processes and $\overline{\mathbb{E}}_{\theta}$ to
denote expectations with respect to the stationary laws $\bar{\mathbb{P}%
}_{\theta}$.

Given any $\epsilon> 0$ and $y
\in\mathbb{R}^{m}$ let $B^{\epsilon}_{y}$ denote the closed ball of radius $\epsilon$
centered on the point $y$
and let $\mathcal{U}_{B^{\epsilon}_{y}}$ denote the uniform distribution
on $B^{\epsilon}_{y}$.
For any $A \subset\mathbb{R}^{m}$, let $\mathbb{I}_{A}$ denote the
indicator function of $A$.  Additionally, for any square matrix $M \in \mathbb{R}^{m \times m}$, we shall let $\left\Vert M \right\Vert$ denote the Frobenius norm $\left\Vert M \right\Vert^{2} = \sum_{j,k = 1}^{m} M_{j,k}^{2}$.

For any two probability measures $\mu_{1},\mu_{2}$ on a measurable
space $(E,\mathscr{E})$ we let $\|\mu_{1}-\mu_{2}\|_{TV}$ denote the total
variation distance between them. For all $p \in [1,\infty )$ we let
$L_{p}(\mu)$ denote the set of real valued measurable functions satisfying
$\int\left\vert f(x)\right\vert ^{p}\mu(dx)<\infty$.

Finally, we note that the asymptotic results that we prove for the ABC MLE and its noisy variant hold independently of the initial condition of the hidden state process $\left\{ X_{k} \right\}_{k \geq 0}$.  Thus, in order to keep the presentation as concise as possible we shall suppress the presence of the initial condition of the hidden state except in those instances where it needs to be referred to explicitly.

\subsection{Particular Assumptions}

\label{subsecParticularAssumptions}

In addition to the assumptions above, the following assumptions are made at
various points in the article. Assumptions (A1)-(A3) below are sufficient to
guarantee asymptotic consistency of the MLE and (A4)-(A5) ensure the existence
of an asymptotic Fisher information matrix, denoted $I(\theta^{\ast})$.
Further, if the asymptotic Fisher information $I(\theta^{\ast})$ is invertible
then under assumptions (A1)-(A5) the MLE will be asymptotically normal, see \cite{doumouryd2004} for more details.

\begin{hypA}
1The parameter vector $\theta^{\ast}$\ belongs to the interior of
$\Theta$ and $\theta=\theta^{\ast}$ if and only if $\overline{\mathbb{P}%
}_{\theta} ( \ldots , Y_{-1} , Y_{0} , Y_{1} , \ldots )  =\overline{\mathbb{P}}_{\theta^{\ast}} ( \ldots , Y_{-1} , Y_{0} , Y_{1} , \ldots ) $.
\end{hypA}

\begin{hypA}
2For all $y\in\mathcal{Y}$, $x,x^{\prime}\in\mathcal{X}$, the mappings
$\theta\rightarrow q_{\theta}(x,x^{\prime})$ and $\theta\rightarrow g_{\theta
}(\left.  y\right\vert x)$ are continuous w.r.t. $\theta$.
\end{hypA}

\begin{hypA}
3There exist constants $\underline{c}_{1},\overline{c}_{1}\in(0,\infty)$
such that for every $y\in\mathcal{Y}$, $x,x^{\prime}\in\mathcal{X}$,
$\theta\in\Theta$
\[
\underline{c}_{1}\leq q_{\theta}(x,x^{\prime}),\,g_{\theta}(\left.
y\right\vert x)\leq\overline{c}_{1}.
\]

\end{hypA}

For the remaining assumptions we assume that there exists an open
ball $G \subset\Theta$ centered at $\theta^{\ast}$ such that

\begin{hypA}
4For all $y\in\mathcal{Y}$, $x,x^{\prime}\in\mathcal{X}$, the mappings
$\theta\rightarrow q_{\theta}(x,x^{\prime})$ and $\theta\rightarrow g_{\theta
}(\left.  y\right\vert x)$ are twice continuously differentiable on $G$.
\end{hypA}

\begin{hypA}
5There exists a constant $\overline{c}_{2}\in(0,\infty)$ such that for
every $y\in\mathcal{Y}$, $x,x^{\prime}\in\mathcal{X}$, $\theta\in G$
\begin{equation*}
\begin{gathered}
\left\vert \nabla_{\theta}\log q_{\theta}(x,x^{\prime})\right\vert , \left\vert
\nabla_{\theta}\log g_{\theta}\left(  y|x\right)  \right\vert ,\vert \nabla_{\theta}
^{2}\log q_{\theta}(x,x^{\prime}) \vert , \\
\vert \nabla_{\theta}^{2}\log
g_{\theta}\left(  y|x\right)  \vert \leq\overline{c}_{2}.
\end{gathered}
\end{equation*}

\end{hypA}

\begin{rem}
In general assumptions (A3) and (A5) hold when the state space $\mathcal{X}$ is compact and when the conditional laws of the observed state given the hidden state are heavy tailed, see for example Section \ref{sec:numex}.  However we expect that the behaviours predicted by Theorems \ref{thmmisspecconv}, \ref{theo:prob_conv}, \ref{theo:asympSolnBias} and \ref{asymnormthm} will provide a good qualitative guide to the behaviour of ABC MLE in practice even in cases where the underlying HMMs do not satisfy these assumptions.    
\end{rem}

Assumptions (A1)-(A5) are sufficient to show that in some sense the ABC MLE inherits the
its asymptotic properties from  the standard MLE. The Lipschitz assumptions below will be used to
establish quantitative bounds on the relative performance of the ABC MLE estimator with respect to that of the MLE.

\begin{hypA}
6There exists an $L\in(0,\infty)$ such that for all, $x\in\mathcal{X}$,
$y,y^{\prime}\in\mathcal{Y}$, $\theta\in\Theta$
\[
\left\vert g_{\theta}(y|x)-g_{\theta}(y^{\prime}|x)\right\vert \leq
L|y-y^{\prime}|.
\]

\end{hypA}

\begin{hypA}
7There exists an $L\in(0,\infty)$ such that for all, $x\in\mathcal{X}$,
$y,y^{\prime}\in\mathcal{Y}$, $\theta\in\Theta$
\[
\left\vert \nabla_{\theta} g_{\theta}(y|x)-\nabla_{\theta} g_{\theta}(y^{\prime}|x)\right\vert
\leq L|y-y^{\prime}|.
\]

\end{hypA}

\section{Approximate Bayesian Computation}

\label{sec:standABC}

\subsection{Estimation Procedure} \label{ABCProcedureApproxDiscussionSubSection}

Following \cite{jassinmarmcc2010} we consider the ABC approximation to the likelihood of a sequence of observations $\hat{Y}_{1} , \ldots , \hat{Y}_{n}$ for some fixed $\theta\in\Theta$ given by,
\begin{align}
\lefteqn{ \mathbb{P}_{\theta}\left(  Y_{1} \in B_{\hat{Y}_{1}}^{\epsilon} , \ldots,
Y_{n} \in B_{\hat{Y}_{n}}^{\epsilon} \right) } \notag \\
&&  = \int_{\mathcal{X}^{n+1}\times\mathcal{Y}^{n}} \bigg[\prod_{k=1}^{n}
q_{\theta} (x_{k-1}, x_{k})\mathbb{I}_{B^{\epsilon}_{\hat{Y}_{k}}}(y_{k})
g_{\theta} (y_{k} \vert x_{k}) \bigg] \pi_{0} (d x_{0}) \, \mu( d
x_{1:n} ) \nu( d y_{1:n} ) . \notag \\
&& \label{eq:approxnewver1}
\end{align}
The purpose of this paper is to analyse the asymptotic properties of the ABC parameter estimator for HMMs defined by
\begin{procedure}
[ABC MLE]\label{algABCMLE} Given $\epsilon>0$ and data $\hat{Y}_{1},\ldots,\hat{Y}_{n}$, estimate $\theta^{\ast}$ with
\begin{equation} \label{ABCstandestimatorargsupdiffeq999999999999111}
\hat{\theta}^{\epsilon}_{n} = \arg \max_{\theta\in\Theta} \mathbb{P}_{\theta}\left(  Y_{1} \in B_{\hat{Y}_{1}}^{\epsilon} , \ldots,
Y_{n} \in B_{\hat{Y}_{n}}^{\epsilon} \right) .
\end{equation}
\end{procedure}
The key to our analysis is the following observation which is, to our knowledge, original;
\begin{eqnarray}
\lefteqn{\int_{\mathcal{X}^{n+1}\times\mathcal{Y}^{n}} \bigg[\prod_{k=1}^{n}
q_{\theta} (x_{k-1}, x_{k})\mathbb{I}_{B^{\epsilon}_{\hat{Y}_{k}}}(y_{k})
g_{\theta} (y_{k} \vert x_{k}) \bigg] \pi_{0} (d x_{0}) \, \mu( d
x_{1:n} ) \nu( d y_{1:n} )  \notag } \\
&&  \qquad \qquad   \qquad \qquad \propto\int_{\mathcal{X}^{n+1}} \bigg[\prod_{k=1}^{n} q_{\theta} (x_{k-1},
x_{k}) g^{\epsilon}_{\theta} (\hat{Y}_{k} \vert x_{k})\bigg] \pi_{0} (d x_{0}) \mu(d
x_{1:n})  \label{eq:approx}%
\end{eqnarray}
where
\begin{equation}
\label{EqnPertCondLaw} g^{\epsilon}_{\theta} (y \vert x) = \frac{1}{ \nu \left(
B^{\epsilon}_{y} \right) } \int_{B^{\epsilon}_{y}} g_{\theta} (y^{\prime}
\vert x) \, \nu( d y^{\prime} )
\end{equation}
and where we note that by Lemma \ref{lemnullprobprob} the quantity in \eqref{EqnPertCondLaw} is well defined $\nu$ a.s..

The crucial point is that the quantity $g^{\epsilon}_{\theta} ( y \vert x )$ defined in \eqref{EqnPertCondLaw} is the density of the measure obtained by convolving the measure corresponding to $g_{\theta} ( y \vert x )$ with  $\mathcal{U}_{B^{\epsilon}_{0}}$ where the density is taken w.r.t.~the new dominating measure obtained by convolving $\nu$ with $\mathcal{U}_{B^{\epsilon}_{0}}$ .  One can then immediately see that the quantities $q_{\theta} (x, x^{\prime})$ and $g^{\epsilon}_{\theta} (y \vert x)$ appearing in \eqref{eq:approx} are the
transition kernels and conditional laws respectively for a perturbed HMM
$\left\{  X_{k} , Y^{\epsilon}_{k} \right\}  _{k \geq0}$ defined such that
it is equal in law to the process 
\begin{equation}  \label{alg1pertHMM}
\left\{  X_{k} , Y_{k} +
\epsilon Z_{k} \right\}  _{k \geq0}
\end{equation}
where $\left\{  X_{k} , Y_{k} \right\}
_{k \geq0}$ is the original HMM and the $\left\{  Z_{k} \right\}  _{k \geq0}$ are
an i.i.d. sequence of $\mathcal{U}_{B^{1}_{0}}$ distributed random variables.  Crucially the constant of proportionality in \eqref{eq:approx},
 which by definition is equal to $\nu \left( B^{\epsilon}_{\hat{Y}_{1}} \right) \times \cdots \times  \nu \left( B^{\epsilon}_{\hat{Y}_{n}} \right)$, is by Lemma \ref{lemnullprobprob} non-zero $\nu^{\otimes n}$ a.s. and is \emph{independent} of the parameter value $\theta$.  Thus it follows that \eqref{ABCstandestimatorargsupdiffeq999999999999111} is statistically identical to the estimator
\begin{equation} \label{ABCstandestimatorargsupdiffeq999999999999}
\hat{\theta}_{n}^{\epsilon}=\arg\sup_{\theta\in\Theta}p_{\theta}^{\epsilon
}\left(  \hat{Y}_{1},\ldots, \hat{Y}_{n}\right)
\end{equation}
where $p_{\theta}^{\epsilon
}\left(  \cdots \right)$ denotes the likelihood of the observations w.r.t.~the law of the perturbed process $\left\{  X_{k} , Y^{\epsilon}_{k} \right\}  _{k \geq0}$.   The value of expressing the ABC estimator \eqref{ABCstandestimatorargsupdiffeq999999999999111} in the mathematically equivalent form \eqref{ABCstandestimatorargsupdiffeq999999999999} is that \eqref{ABCstandestimatorargsupdiffeq999999999999} reveals the underlying mathematical structure of the estimator and furthermore, as we shall see in the next section, expresses it in a form which is particularly tractable to analysis.  

We note that our observations \eqref{eq:approx} and \eqref{EqnPertCondLaw} are similar in spirit to those made in \cite{wil2008}.  However in that paper the author takes the point of view that the original collection of HMMs for which we are trying to perform inference is itself misspecified.

\subsection{Theoretical Results}

It follows from the previous section that performing ABC MLE is equivalent to estimating the parameter by taking a data set
generated by one of the original HMMs $\left\{  X_{k} , Y_{k} \right\}  _{k
\geq0}$ and finding the value of $\theta$ which maximises the likelihood of
that data set under the corresponding perturbed HMM $\left\{  X_{k} ,
Y^{\epsilon}_{k} \right\}  _{k \geq0}$. Thus the ABC MLE estimator will
effectively suffer from the problem of model mis-specification. This raises
the question of whether the resulting estimator will still be asymptotically
consistent. As the following example shows one must expect that, in general,
the answer to this question will be no.

\begin{example}
For each $\theta\in\left[  0 , 1 \right]  $ let $\left\{  X_{k} \right\}  _{k
\geq0}$ be a directly observed sequence of i.i.d. random variables with common law%

\[
X_{k} = \left\{
\begin{array}
[c]{cc}%
\theta & \text{ w.p. } 0.5\\
-\theta & \text{ w.p. } 0.5
\end{array}
\right.
\]
and let $\theta^{\ast}$ denote the true value of the model parameter. Then for any $\epsilon > 0$ the ABC MLE will not be asymptotically consistent even though the MLE estimator is asymptotically
consistent for any value of $\theta^{\ast}$.  Furthermore for
$2 \theta^{\ast} > \epsilon> \theta^{\ast} > 0$ the ABC approximation to the likelihood is maximized at $\theta = 0$ for any sequence of observations.
\end{example}

Although the ABC MLE estimator is no longer asymptotically consistent we show
the following below. Almost surely the ABC MLE will converge, with increasing sample size, to a given point in parameter space (more generally the set of accumulation points will belong to a given subset of parameter space).  Further, we show that these accumulation points must lie in some neighbourhood of
the true parameter value and that the size of this neighbourhood shrinks to zero as $\epsilon$ goes
to zero (Theorem \ref{theo:prob_conv}). Finally we show that under certain
Lipschitz conditions one can obtain a rate for the decrease in the size of
these neighbourhoods (Theorem \ref{theo:asympSolnBias}).  We note that these results are very much misspecified MLE results  in the spirit of, for example, \cite{whi1982}.  However because the dominating measures of the original and perturbed HMMs are no longer necessarily mutually absolutely continuous with respect to each other they can no longer be interpreted in terms of minimising Kullback-Leibler
distances.

Before we present our results we first discuss some technical issues that arise in their proofs.  It is tempting to try and understand the behaviour of the ABC MLE by extending the parameter space $\Theta$ to include $\epsilon$ and then applying standard results from the theory of MLE.  Unfortunately the existing theory of MLE requires that the perturbed likelihoods $g^{\epsilon}_{\theta}(y \vert x)$ (see \eqref{EqnPertCondLaw}) be continuous w.r.t.~$\epsilon$ which is not true for general dominating measures $\nu$.  The essence of our method is show that despite this certain asymptotic quantities associated with the  likelihoods of the perturbed processes \eqref{alg1pertHMM} are still sufficiently continuous as functions of $\epsilon$.  In order to do this we need to establish that in some probabalistic sense the order of the operations of differentiating and taking asymptotic limits can be interchanged.  It it this that constitutes the bulk of Appendix B.

In order to state and prove these results it is convenient to make the following definitions.  For any $\theta\in$ $\Theta$ and $\epsilon>0$, let
\begin{equation}
l^{\epsilon}(\theta)=\bar{\mathbb{E}}_{\theta^{\ast}}\left[  \log p_{\theta
}^{\epsilon}(Y_{1} \vert Y_{-\infty:0})\right]  \label{eq:asympLogLik_epsilon}%
\end{equation}
where $p_{\theta}^{\epsilon}(\cdot|\cdot)$ denotes the conditional laws of the observations of the perturbed processes (\ref{alg1pertHMM}) given the infinite past and the expectations are taken with respect to the stationary measure of the unperturbed HMM with parameter $\theta^{\ast}$. Further for $\epsilon=0$ we let
\begin{equation}
l^{0}(\theta)=l(\theta)=\bar{\mathbb{E}}_{\theta^{\ast}}\left[  \log
p_{\theta}(Y_{1} \vert Y_{-\infty:0})\right]  \label{eq:asympLogLik} .
\end{equation}
Our first result shows that the ABC MLE is asymptotically biased

\begin{theo}
\label{thmmisspecconv} Assume (A2)-(A3). Then for every $\epsilon>0$,
$\sup_{\theta\in\Theta}l^{\epsilon}(\theta)$ is achieved. Further let
\[
\label{AccumSetDef}\mathcal{T}^{\epsilon}=\left\{  \theta^{\prime}\in
\Theta:l^{\epsilon}(\theta^{\prime})=\sup_{\theta\in\Theta}l^{\epsilon}%
(\theta)\right\}
\]
be the set of these maximizers, then for any initial distribution $\pi_{0}$
we have that almost surely every accumulation point of the sequence of
estimators $\hat{\theta}_{1}^{\epsilon},\ldots$ defined in Procedure
\ref{algABCMLE} belongs to $\mathcal{T}^{\epsilon}$.
\end{theo}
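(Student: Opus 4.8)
The plan is to exploit the equivalence, established in the previous subsection, between the ABC MLE $\hat{\theta}_n^{\epsilon}$ and the maximum likelihood estimator of the \emph{perturbed} HMM $\{X_k, Y_k^{\epsilon}\}_{k\geq 0}$ whose transition density is $q_{\theta}$ and whose observation density is $g_{\theta}^{\epsilon}$ (see \eqref{EqnPertCondLaw}, \eqref{eq:approx}). The first observation is that this perturbed HMM inherits the structural content of (A2)--(A3): since $g_{\theta}^{\epsilon}(y\vert x)$ is, by \eqref{EqnPertCondLaw}, a $\nu$-weighted average of the values $g_{\theta}(y^{\prime}\vert x)$ over $y^{\prime}\in B_{y}^{\epsilon}$, the two-sided bound $\underline{c}_{1}\leq g_{\theta}(y^{\prime}\vert x)\leq\overline{c}_{1}$ of (A3) passes directly to $g_{\theta}^{\epsilon}$, and continuity in $\theta$ from (A2) transfers by dominated convergence using this same bound. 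Consequently the perturbed model falls within the scope of the standard uniform law of large numbers for HMM log-likelihoods (as developed in \cite{doumouryd2004}). The essential difference from the classical consistency theorem is that the data $\hat{Y}_{1:n}$ are generated by the \emph{unperturbed} law $\bar{\mathbb{P}}_{\theta^{\ast}}$ while the likelihood being maximised is that of the perturbed model; this misspecification is exactly why the limit $l^{\epsilon}$ of \eqref{eq:asympLogLik_epsilon} need not be maximised at $\theta^{\ast}$, and why accumulation points can only be guaranteed to lie in the maximiser set $\mathcal{T}^{\epsilon}$.

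The core of the argument is to show that, almost surely and uniformly in $\theta\in\Theta$,
\[
\frac{1}{n}\log p_{\theta}^{\epsilon}(\hat{Y}_{1:n}) = \frac{1}{n}\sum_{k=1}^{n} \log p_{\theta}^{\epsilon}(\hat{Y}_{k}\vert\hat{Y}_{1:k-1}) \longrightarrow l^{\epsilon}(\theta).
\]
First I would replace each finite-past conditional density $p_{\theta}^{\epsilon}(\hat{Y}_{k}\vert\hat{Y}_{1:k-1})$ by its infinite-past counterpart $p_{\theta}^{\epsilon}(\hat{Y}_{k}\vert\hat{Y}_{-\infty:k-1})$. The lower bound in (A3), inherited by $q_{\theta}$ and $g_{\theta}^{\epsilon}$, furnishes a Doeblin-type minorisation of the perturbed filtering recursion, so that the filter forgets its initialisation at a geometric rate that is uniform over observation sequences and over $\theta$; this both controls the approximation error and shows that the infinite-past predictive densities are well defined. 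Integrating $g_{\theta}^{\epsilon}$ against the (probability) predictive law of the hidden state also yields $\vert\log p_{\theta}^{\epsilon}(\cdot\vert\cdot)\vert\leq\max(\vert\log\underline{c}_{1}\vert,\vert\log\overline{c}_{1}\vert)$, so that $l^{\epsilon}$ is finite. Since $\{\hat{Y}_{k}\}$ is stationary and ergodic under $\bar{\mathbb{P}}_{\theta^{\ast}}$ (ergodicity being a consequence of the assumed positive Harris recurrence) and the infinite-past terms form a bounded stationary ergodic functional of this sequence, Birkhoff's ergodic theorem gives the pointwise a.s.\ limit $l^{\epsilon}(\theta)$; independence of the limit from the initial law $\pi_{0}$ follows from the same forgetting estimate, now applied on the data side to couple the $\pi_{0}$-started chain with its stationary version.

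It remains to upgrade this to uniform convergence and to establish continuity of $l^{\epsilon}$. Continuity of $\theta\mapsto l^{\epsilon}(\theta)$ follows from the continuity of $g_{\theta}^{\epsilon}$ and $q_{\theta}$ in $\theta$, propagated through the forgetting estimate to the infinite-past predictive density, together with dominated convergence under the uniform bound; uniformity of the convergence then follows by a standard equicontinuity and covering argument over the compact set $\Theta$. Continuity of $l^{\epsilon}$ and compactness of $\Theta$ immediately give that $\sup_{\theta\in\Theta}l^{\epsilon}(\theta)$ is attained, so $\mathcal{T}^{\epsilon}\neq\emptyset$. Finally, writing $M_{n}(\theta):=n^{-1}\log p_{\theta}^{\epsilon}(\hat{Y}_{1:n})$, the estimator $\hat{\theta}_{n}^{\epsilon}$ maximises $M_{n}$, so if $\hat{\theta}_{n_{j}}^{\epsilon}\to\theta^{\prime}$ along a subsequence then uniform convergence and continuity give
\[
l^{\epsilon}(\theta^{\prime})=\lim_{j} M_{n_{j}}(\hat{\theta}_{n_{j}}^{\epsilon})=\lim_{j}\sup_{\theta\in\Theta}M_{n_{j}}(\theta)=\sup_{\theta\in\Theta}l^{\epsilon}(\theta),
\]
whence $\theta^{\prime}\in\mathcal{T}^{\epsilon}$, as required.

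The step I expect to be the main obstacle is the uniform (in both the observation sequence and in $\theta$) geometric forgetting of the perturbed filter and the attendant uniform convergence of the normalised log-likelihood; this is where the two-sided bound (A3) does the real work, and it is this analysis that would occupy the bulk of the appendix. A conceptual subtlety, flagged in the discussion preceding the theorem, is that the unperturbed and perturbed dominating measures need not be mutually absolutely continuous, so the limit cannot be phrased through a Kullback--Leibler divergence; fortunately this theorem requires only the existence, finiteness and continuity of the ergodic limit, all of which are secured by boundedness rather than by any absolute-continuity structure.
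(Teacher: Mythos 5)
Your proposal is correct and takes essentially the same route as the paper: both view the ABC MLE as the (misspecified) MLE of the perturbed HMM, note that (A2)--(A3) transfer to $g^{\epsilon}_{\theta}$, use the two-sided bound in (A3) to obtain uniform filter forgetting and hence uniform convergence of the finite-past conditional log-likelihoods, continuity of $l^{\epsilon}$, and attainment of the supremum by compactness of $\Theta$, and then dispose of the accumulation-point claim by the standard consistency machinery of \cite{doumouryd2004}. The only difference is one of explicitness: you spell out the ergodic-theorem, uniform-convergence and argmax steps that the paper delegates to the reader via the citation of \cite{doumouryd2004}.
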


\begin{proof}
It follows from (A2) and (A3) that for the perturbed HMM defined in \eqref {alg1pertHMM} the conditional laws $p^{\epsilon}_{\theta
}(y_{1} \vert y_{-n:0})$ are continuous w.r.t.~$\theta$. Further
it follows from (A3) and (\ref{lemFiltStabStandardeq2}) that the conditional
laws $p^{\epsilon}_{\theta}(y_{1} \vert y_{-n:0})$ converge
uniformly to the conditional laws $p^{\epsilon}_{\theta}(y_{1} \vert
y_{-\infty:0})$ and are uniformly bounded, both above and away from zero. It then
follows that the conditional log-likelihood functions $\log p^{\epsilon
}_{\theta}(y_{1} \vert y_{-n:0})$ are continuous, uniformly
bounded and converge uniformly to $\log p^{\epsilon}_{\theta}(y_{1}
\vert y_{-\infty:0})$ and hence that the expected values $\bar{\mathbb{E}%
}_{\theta^{\ast}} \left[  \log p^{\epsilon}_{\theta} (Y_{1} \vert Y_{-\infty: 0})\right]  $ are also continuous functions of $\theta\in\Theta$. The first
part of the theorem then follows from the compactness of $\Theta$.

The second part of the result now follows from (A2) and (A3) by using the same
arguments as used by \cite{doumouryd2004} to prove the asymptotic consistency of
the MLE. We leave it to the reader to check the details.
\end{proof}

Although Theorem \ref{thmmisspecconv} shows that the ABC MLE is asymptotically
biased, the following result shows that this error can be made arbitrarily
small by choosing a sufficiently small $\epsilon$.

\begin{theo}
\label{theo:prob_conv} Assume (A1)-(A3). Then
\begin{equation} \label{thmprobconvstateeq}
\lim_{\epsilon\rightarrow0}\quad\sup_{\theta\in\mathcal{T}^{\epsilon}%
}\left\vert \theta-\theta^{\ast}\right\vert =0.
\end{equation}
\end{theo}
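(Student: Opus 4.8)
The plan is to reduce \eqref{thmprobconvstateeq} to two ingredients: (i) the limiting contrast $l^{0}=l$ of \eqref{eq:asympLogLik} is maximised \emph{uniquely} at $\theta^{\ast}$, and (ii) the map $(\epsilon,\theta)\mapsto l^{\epsilon}(\theta)$ is jointly continuous on $[0,\bar\epsilon]\times\Theta$ for some $\bar\epsilon>0$, where $l^{\epsilon}$ is as in \eqref{eq:asympLogLik_epsilon} and the value at $\epsilon=0$ is $l^{0}$. Granting (i)--(ii), the theorem follows by a routine argmax argument. If \eqref{thmprobconvstateeq} failed there would exist $\delta>0$, $\epsilon_{n}\downarrow0$ and $\theta_{n}\in\mathcal{T}^{\epsilon_{n}}$ with $|\theta_{n}-\theta^{\ast}|\ge\delta$; by compactness of $\Theta$ I may pass to a subsequence with $\theta_{n}\to\bar\theta$, $\bar\theta\ne\theta^{\ast}$. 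Optimality of $\theta_{n}$ for $l^{\epsilon_{n}}$ gives $l^{\epsilon_{n}}(\theta_{n})\ge l^{\epsilon_{n}}(\theta^{\ast})$; letting $n\to\infty$ and using the joint continuity (ii) on both sides yields $l^{0}(\bar\theta)\ge l^{0}(\theta^{\ast})$, which contradicts (i).

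For (i) I would run the classical identification argument for the MLE. Writing
\[
l^{0}(\theta^{\ast})-l^{0}(\theta)=\bar{\mathbb{E}}_{\theta^{\ast}}\Big[\log\frac{p_{\theta^{\ast}}(Y_{1}\vert Y_{-\infty:0})}{p_{\theta}(Y_{1}\vert Y_{-\infty:0})}\Big],
\]
this relative entropy rate is non-negative by the information inequality and vanishes exactly when the stationary observation laws $\bar{\mathbb{P}}_{\theta^{\ast}}$ and $\bar{\mathbb{P}}_{\theta}$ agree, which by the identifiability assumption (A1) happens only at $\theta=\theta^{\ast}$. Under (A2)--(A3) the ergodicity and integrability needed to make this rigorous are in force, and this is precisely the step used to establish consistency of the MLE in \cite{doumouryd2004}.

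The crux, and the step I expect to be the main obstacle, is the joint continuity (ii), in particular continuity in $\epsilon$ at $\epsilon=0$. The difficulty is that $\epsilon\mapsto g^{\epsilon}_{\theta}(y\vert x)$ of \eqref{EqnPertCondLaw} need not be continuous at $0$ for a general dominating measure $\nu$; the correct statement is instead the measure-differentiation limit $g^{\epsilon}_{\theta}(y\vert x)=\nu(B^{\epsilon}_{y})^{-1}\int_{B^{\epsilon}_{y}}g_{\theta}(y'\vert x)\,\nu(dy')\to g_{\theta}(y\vert x)$, valid for $\nu$-almost every $y$ (Lebesgue/Besicovitch differentiation, applicable since $\mathcal{Y}\subset\mathbb{R}^{m}$ and, by (A3), $g_{\theta}(\cdot\vert x)\in L_{1}(\nu)$). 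I would first establish this almost-everywhere convergence of the one-step laws and then transport it to the asymptotic contrast. Since the perturbed model \eqref{alg1pertHMM} inherits the bounds of (A3) with the same constants uniformly in $\epsilon$ — $g^{\epsilon}_{\theta}$ being an average of $g_{\theta}$ satisfies $\underline{c}_{1}\le g^{\epsilon}_{\theta}\le\overline{c}_{1}$ — the forgetting estimate (\ref{lemFiltStabStandardeq2}) approximates $p^{\epsilon}_{\theta}(y_{1}\vert y_{-\infty:0})$ by finite-history versions $p^{\epsilon}_{\theta}(y_{1}\vert y_{-m:0})$ uniformly in $\epsilon$ and $\theta$, and each such version is an integral of products of $q_{\theta}$ and $g^{\epsilon}_{\theta}$. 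Feeding the a.e.\ convergence of the one-step laws into these finite integrals and invoking dominated convergence — legitimate because the integrands are bounded above and away from zero by (A3), so $\log p^{\epsilon}_{\theta}$ is uniformly bounded — gives continuity of $\epsilon\mapsto l^{\epsilon}(\theta)$ at $0$ for each fixed $\theta$, despite the discontinuity of the densities themselves. The continuity in $\theta$ (for each $\epsilon$, including $\epsilon=0$) is that already obtained in the proof of Theorem \ref{thmmisspecconv}; combining it with the $\epsilon$-continuity and the compactness of $\Theta$ upgrades the pointwise statement to the joint continuity, equivalently to the uniform convergence $\sup_{\theta}|l^{\epsilon}(\theta)-l^{0}(\theta)|\to0$ required in (ii). The genuinely delicate bookkeeping — interchanging the $\epsilon\to0$ limit with the infinite-horizon expectation defining $l^{\epsilon}$, uniformly over histories and over $\theta$ — is the technical content I would defer to the appendix.
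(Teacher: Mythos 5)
Your proposal is correct and follows essentially the same route as the paper: the same compactness-and-contradiction argmax argument, reducing the theorem to (a) the identifiability fact from \cite{doumouryd2004} that (A1)--(A3) imply $l(\theta^{\ast})>l(\theta)$ for $\theta\neq\theta^{\ast}$, and (b) a joint continuity/right-continuity lemma for $(\theta,\epsilon)\mapsto l^{\epsilon}(\theta)$, which is exactly the paper's Lemma \ref{lemexpllhcont}. Your sketch of that lemma's proof (uniform-in-$\epsilon$ bounds inherited from (A3), filter forgetting to reduce to finite histories, Lebesgue differentiation to handle the discontinuity of $g^{\epsilon}_{\theta}$ at $\epsilon=0$, then dominated convergence) likewise mirrors the paper's Appendix B argument.
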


\begin{rem}
Theorems \ref{thmmisspecconv} and \ref{theo:prob_conv} provide a theoretical justification for the ABC MLE procedure analogous to that provided for the standard MLE procedure by the classical notion of asymptotic consistency.  In particular they show that an arbitrary degree of accuracy in the parameter estimate can be achieved given sufficient data and a sufficiently small $\epsilon$.
\end{rem}

In order to prove Theorem \ref{theo:prob_conv} we need the following Lemma whose proof is relegated to Appendix B.

\begin{lemma} \label{lemexpllhcont}
Assume (A2)-(A3).  Then the mapping $(\theta,\epsilon)\in\Theta\times\lbrack
0,\infty)\rightarrow l^{\epsilon}(\theta)$ is continuous in $\theta$ and right continuous in $\epsilon$ in the sense that for all pairs of sequences $\theta_{n} \to \theta$ and $\epsilon_{n} \searrow \epsilon$ we have that
\begin{equation*}
l^{\epsilon_{n}}(\theta_{n}) \to l^{\epsilon}(\theta) .
\end{equation*}
\end{lemma}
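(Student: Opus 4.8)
The plan is to reduce the statement to an almost-sure convergence of conditional log-likelihoods plus a dominated convergence argument. Fix sequences $\theta_n \to \theta$ and $\epsilon_n \searrow \epsilon$. Because (A3) bounds $q_\theta$ and $g_\theta$ uniformly above and away from zero, the averaged densities $g^\epsilon_\theta$ inherit the same bounds $\underline{c}_1 \le g^\epsilon_\theta \le \overline{c}_1$, and these propagate through the filtering recursion to show that $\log p^\epsilon_\theta(Y_1 \vert Y_{-\infty:0})$ is bounded by a constant independent of $\theta$ and $\epsilon$. Hence it suffices to show that the integrand converges $\bar{\mathbb{P}}_{\theta^{\ast}}$-a.s., i.e. $\log p^{\epsilon_n}_{\theta_n}(Y_1 \vert Y_{-\infty:0}) \to \log p^\epsilon_\theta(Y_1 \vert Y_{-\infty:0})$, and then to invoke dominated convergence against the probability measure $\bar{\mathbb{P}}_{\theta^{\ast}}$.

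The core building block, and the hard part, is the right-continuity of the observation density $g^\epsilon_\theta(y\vert x)$ in $\epsilon$. The key observation is that the closed balls satisfy $\bigcap_{\epsilon' > \epsilon} B^{\epsilon'}_y = B^\epsilon_y$, so that as $\epsilon_n \searrow \epsilon$ we have $B^{\epsilon_n}_y \searrow B^\epsilon_y$ and $\mathbb{I}_{B^{\epsilon_n}_y}(y') \to \mathbb{I}_{B^\epsilon_y}(y')$ for every $y'$ (the sphere $\{|y'-y|=\epsilon\}$ causes no trouble, since it already lies in the closed limiting ball). Continuity from above of the finite measure $\nu$ then gives $\nu(B^{\epsilon_n}_y) \to \nu(B^\epsilon_y)$, which is strictly positive $\nu$-a.s. by Lemma \ref{lemnullprobprob}, and dominated convergence (using (A2) for $\theta_n \to \theta$ together with the (A3) bound) gives $\int_{B^{\epsilon_n}_y} g_{\theta_n}\,\nu \to \int_{B^\epsilon_y} g_\theta\,\nu$. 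Hence $g^{\epsilon_n}_{\theta_n}(y\vert x) \to g^\epsilon_\theta(y\vert x)$ for $\nu$-a.e. $y$. This is precisely where the one-sidedness originates: for $\epsilon_n \nearrow \epsilon$ the indicators disagree on the sphere, so left-continuity can fail when $\nu$ charges spheres, whereas right-continuity always holds.

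With this in hand I would establish the a.s. convergence of the conditional densities in two stages. First, for each fixed horizon $N$, the finite-past predictive density $p^\epsilon_\theta(y_1\vert y_{-N:0})$ is a ratio of integrals over the compact state space $\mathcal{X}$ of finite products of the kernels $q_\theta$ and the finitely many factors $g^\epsilon_\theta(y_k\vert x_k)$; since numerator and denominator converge by dominated convergence jointly in $(\theta,\epsilon)$ via the building block above, and the denominator is bounded below by (A3), we get $p^{\epsilon_n}_{\theta_n}(y_1\vert y_{-N:0}) \to p^\epsilon_\theta(y_1\vert y_{-N:0})$ for $\nu$-a.e. observation block. Second, I would upgrade this to the infinite past by a uniform forgetting argument: the geometric filter-stability bound \eqref{lemFiltStabStandardeq2}, which by (A3) holds uniformly in $\theta$, $\epsilon$ and the observations, controls $\lvert\log p^\epsilon_\theta(y_1\vert y_{-N:0}) - \log p^\epsilon_\theta(y_1\vert y_{-\infty:0})\rvert$ by $C\rho^N$. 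A standard triangle-inequality interchange of limits (choose $N$ large to make the two forgetting errors uniformly small, then send $n\to\infty$ on the fixed-$N$ term) yields $p^{\epsilon_n}_{\theta_n}(Y_1\vert Y_{-\infty:0}) \to p^\epsilon_\theta(Y_1\vert Y_{-\infty:0})$, $\bar{\mathbb{P}}_{\theta^{\ast}}$-a.s. Taking logarithms (the densities are bounded away from zero) and applying the dominated convergence step from the first paragraph completes the proof. The only additional care needed is to check that the various $\nu$-null exceptional sets are genuinely negligible under $\bar{\mathbb{P}}_{\theta^{\ast}}$, which holds because the stationary observation law is absolutely continuous w.r.t. $\nu$ with density bounded by $\overline{c}_1$.
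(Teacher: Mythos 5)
Your overall architecture is essentially the paper's: reduce to finite-past conditional likelihoods via the uniform filter-stability bound \eqref{lemFiltStabStandardeq2}, prove pointwise convergence of the perturbed observation densities, and conclude by dominated convergence under the uniform (A3) bounds. However, there is a genuine gap: your key building block breaks down at $\epsilon=0$, which lies in the domain $[0,\infty)$ of the lemma and is precisely the case that matters downstream.

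Concretely, for $\epsilon_n\searrow\epsilon>0$ your argument (closed balls decreasing to the closed ball, continuity from above of the finite measure $\nu$, positivity of $\nu(B^{\epsilon}_{y})$ from Lemma \ref{lemnullprobprob}, then dominated convergence) is sound. But when $\epsilon_n\searrow 0$ you have $B^{\epsilon_n}_{y}\searrow\{y\}$, so continuity from above gives $\nu(B^{\epsilon_n}_{y})\to\nu(\{y\})$, which equals zero for $\nu$-a.e.\ $y$ whenever $\nu$ is nonatomic (e.g.\ Lebesgue measure); Lemma \ref{lemnullprobprob} is stated only for $\epsilon>0$ and gives nothing here. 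Your ratio therefore tends to an indeterminate $0/0$, and the argument does not deliver $g^{\epsilon_n}_{\theta_n}(y\vert x)\to g_{\theta}(y\vert x)$, which is what is required because $l^{0}(\theta)=l(\theta)$ is defined in \eqref{eq:asympLogLik} through the \emph{unperturbed} densities. This is exactly the obstruction the paper flags (the perturbed densities \eqref{EqnPertCondLaw} are not continuous in $\epsilon$ for general $\nu$), and its proof resolves it by invoking the Lebesgue differentiation theorem to get $\nu(B^{\epsilon_n}_{y})^{-1}\int_{B^{\epsilon_n}_{y}}g_{\theta}(y'\vert x)\,\nu(dy')\to g_{\theta}(y\vert x)$ for $\nu$-a.e.\ $y$, together with a telescoping identity and the modulus of continuity $\Delta g^{\delta}_{\theta}$ (to which the differentiation theorem is applied again) to decouple the simultaneous limit $\theta_n\to\theta$ from $\epsilon_n\searrow 0$. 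Since the proof of Theorem \ref{theo:prob_conv} applies the lemma along sequences $\epsilon_n\searrow 0$, the missing case is the one the lemma exists to cover; as written, your proof establishes the conclusion only on $\Theta\times(0,\infty)$.
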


\begin{proof}[Proof of Theorem \ref{theo:prob_conv}]
In order to prove \eqref{thmprobconvstateeq}, given that by Lemma \ref{lemexpllhcont} the mapping $(\theta,\epsilon)\in\Theta\times\lbrack
0,\infty)\rightarrow l^{\epsilon}(\theta) - l^{\epsilon}(\theta^{\ast})$ is continuous, it is sufficient to show that for any $\delta>0$ there exists an
$\epsilon^{\prime}>0$ such that $\mathcal{T}^{\epsilon}\subset B_{\theta
^{\ast}}^{\delta}$ for all $\epsilon\leq\epsilon^{\prime}$. Suppose that this
property does not hold. Then, by the compactness of $\Theta$, there must exist $\delta > 0$ and
sequences $\epsilon_{n}\searrow0$ and $\theta_{n}\rightarrow\theta\in\left\{
\theta^{\prime}:\left\vert \theta^{\prime}-\theta^{\ast}\right\vert \geq
\delta\right\}  $ such that
\[
l^{\epsilon_{n}}(\theta_{n}) - l^{\epsilon_{n}}(\theta^{\ast}) \geq 0
\]
for all $n$.  However it would then follow from the continuity of $l^{\epsilon}(\theta) - l^{\epsilon}( \theta^{\ast} )$\ that $l( \theta) \geq l( \theta^{\ast} )$ which violates  (A1).  (In \cite{doumouryd2004} it is shown that under (A2) and (A3) that (A1) is equivalent to having that $l(\theta^{\ast}) > l(\theta)$ for all $\theta \neq \theta^{\ast}$.)
\end{proof}

The next result shows that, under some additional assumptions, we can
characterise the rate at which the asymptotic error in the ABC MLE decreases
with $\epsilon$.

\begin{theo}
\label{theo:asympSolnBias} Assume (A1)-(A7) and that the asymptotic Fisher
information matrix $I(\theta^{\ast})$ is invertible. Then there exist finite
positive constants $C,\overline{\epsilon}$ such that for all $\epsilon
\leq\overline{\epsilon}$
\[
\sup_{\theta\in\mathcal{T}^{\epsilon}}\left\vert \theta-\theta^{\ast
}\right\vert \leq C\epsilon.
\]
\end{theo}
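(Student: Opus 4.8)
The plan is to exploit the local quadratic structure of the limiting log-likelihood at $\theta^{\ast}$ together with an $O(\epsilon)$ control on the perturbation of its gradient, and then invert a strongly concave quadratic. The starting point is Theorem \ref{theo:prob_conv}, which guarantees $\sup_{\theta\in\mathcal{T}^{\epsilon}}|\theta-\theta^{\ast}|\to 0$ as $\epsilon\searrow 0$; hence there is an $\overline{\epsilon}_{0}>0$ such that for every $\epsilon\leq\overline{\epsilon}_{0}$ each maximiser $\theta^{\epsilon}\in\mathcal{T}^{\epsilon}$ lies in the open ball $G$ on which (A4)-(A5) hold, and in particular in the interior of $\Theta$. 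Granted the differentiability established in the next step, each such $\theta^{\epsilon}$ then satisfies the first-order stationarity condition $\nabla_{\theta}l^{\epsilon}(\theta^{\epsilon})=0$.

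First I would establish that, on a neighbourhood of $\theta^{\ast}$, the map $\theta\mapsto l^{\epsilon}(\theta)$ is twice continuously differentiable with Hessian converging, uniformly on that neighbourhood, to $\nabla_{\theta}^{2}l(\theta)$ as $\epsilon\searrow 0$. Since $-\nabla_{\theta}^{2}l(\theta^{\ast})=I(\theta^{\ast})$ is positive definite by hypothesis, continuity then yields a radius $r>0$ and a threshold $\epsilon_{1}>0$ so that $-\nabla_{\theta}^{2}l^{\epsilon}(\theta)\succeq\tfrac{1}{2}I(\theta^{\ast})$ for all $|\theta-\theta^{\ast}|\leq r$ and $\epsilon\leq\epsilon_{1}$. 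This is a uniform strong-concavity estimate: any Hessian averaged along a segment inside the ball is invertible, with the norm of its inverse bounded by $2/\lambda_{\min}(I(\theta^{\ast}))$.

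Second I would bound the score bias $|\nabla_{\theta}l^{\epsilon}(\theta^{\ast})|\leq C'\epsilon$. Because the data remain generated under the unperturbed law $\overline{\mathbb{P}}_{\theta^{\ast}}$ while $l^{\epsilon}$ scores them under the perturbed model, the correctly-specified identity $\nabla_{\theta}l(\theta^{\ast})=0$ (the expected conditional score vanishing under $\bar{\mathbb{E}}_{\theta^{\ast}}$) no longer holds exactly at $\epsilon>0$; the residual is precisely the discrepancy between the perturbed and unperturbed stationary expectations of the perturbed score. Since the perturbed observation process is $\{X_{k},Y_{k}+\epsilon Z_{k}\}$, displacing each observation by at most $\epsilon$, the Lipschitz assumptions (A6)-(A7) convert this displacement into an $O(\epsilon)$ bound on the difference of the two expectations, yielding the linear rate.

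Combining these, for $\epsilon\leq\overline{\epsilon}:=\min(\overline{\epsilon}_{0},\epsilon_{1})$ I would expand the stationarity condition about $\theta^{\ast}$:
\[
0=\nabla_{\theta}l^{\epsilon}(\theta^{\epsilon})=\nabla_{\theta}l^{\epsilon}(\theta^{\ast})+H_{\epsilon}\,(\theta^{\epsilon}-\theta^{\ast}),\qquad H_{\epsilon}:=\int_{0}^{1}\nabla_{\theta}^{2}l^{\epsilon}\bigl(\theta^{\ast}+t(\theta^{\epsilon}-\theta^{\ast})\bigr)\,dt,
\]
so that $|\theta^{\epsilon}-\theta^{\ast}|\leq\|H_{\epsilon}^{-1}\|\,|\nabla_{\theta}l^{\epsilon}(\theta^{\ast})|\leq(2/\lambda_{\min}(I(\theta^{\ast})))\,C'\epsilon=:C\epsilon$, uniformly over $\theta^{\epsilon}\in\mathcal{T}^{\epsilon}$, which is the assertion. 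The main obstacle is the differentiability and uniform-in-$\epsilon$ control of $l^{\epsilon}$ underlying the first two steps: as noted before Lemma \ref{lemexpllhcont}, the perturbed conditional densities $g_{\theta}^{\epsilon}$ are not continuous in $\epsilon$ with respect to the fixed dominating measure, so neither the convergence of the asymptotic Hessian (an asymptotic missing-information statement for the Fisher information) nor the $O(\epsilon)$ score bias can be obtained by naive dominated convergence. Both instead require interchanging differentiation with the infinite-past asymptotic limit, carried out in Appendix B, with (A6)-(A7) supplying the quantitative $\epsilon$-dependence.
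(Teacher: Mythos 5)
Your overall architecture --- stationarity of the maximiser, an $O(\epsilon)$ perturbation bound on the score, and inversion of a locally strongly concave quadratic --- matches the paper's, and your Step 2 is sound: the bound $|\nabla_{\theta}l^{\epsilon}(\theta^{\ast})|\le C'\epsilon$ is exactly the $\theta=\theta^{\ast}$ instance of \eqref{eq:uniformGradlnApprox1} in Lemma \ref{lemABCGradApprox}, which the paper proves by precisely the coupling-plus-(A6)--(A7) argument you sketch. The genuine gap is in your Step 1 and the expansion that uses it: you expand the \emph{perturbed} objective, writing $0=\nabla_{\theta}l^{\epsilon}(\theta^{\epsilon})=\nabla_{\theta}l^{\epsilon}(\theta^{\ast})+H_{\epsilon}(\theta^{\epsilon}-\theta^{\ast})$ with $H_{\epsilon}=\int_{0}^{1}\nabla_{\theta}^{2}l^{\epsilon}\bigl(\theta^{\ast}+t(\theta^{\epsilon}-\theta^{\ast})\bigr)dt$, so you need $\nabla_{\theta}^{2}l^{\epsilon}$ to exist and to converge to $\nabla_{\theta}^{2}l$ uniformly near $\theta^{\ast}$, with a concavity constant independent of $\epsilon$. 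The paper never establishes this: Lemma \ref{lemABCGradApprox} asserts existence of $\nabla_{\theta}l^{\epsilon}$, $\nabla_{\theta}l$ and $\nabla_{\theta}^{2}l$ only, i.e.\ second-derivative control is obtained solely for the unperturbed limit. Your claim that the missing piece is ``carried out in Appendix B'' as ``an asymptotic missing-information statement'' conflates two different objects: $I^{\epsilon}(\theta^{\ast})$ in Lemma \ref{asymmissinf} and Theorem \ref{asymnormthm} is the Fisher information of the perturbed model evaluated on \emph{perturbed} data $Y^{\epsilon}$ (a correctly specified pair, for which the information identity is valid), whereas $\nabla_{\theta}^{2}l^{\epsilon}(\theta^{\ast})$ is the Hessian of the misspecified limit \eqref{eq:asympLogLik_epsilon}, in which the perturbed model scores \emph{unperturbed} data; for that pair $-\nabla_{\theta}^{2}l^{\epsilon}(\theta^{\ast})\ne I^{\epsilon}(\theta^{\ast})$ in general (were the two linked in the usual way, there would be no asymptotic bias for the theorem to quantify). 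So part 3 of Theorem \ref{asymnormthm} cannot substitute for the uniform Hessian convergence you require; proving it would demand a second-order analogue of the Appendix B machinery, and any quantitative version would call for Lipschitz control of $\nabla_{\theta}^{2}g_{\theta}(y|x)$ in $y$, an assumption not contained in (A6)--(A7).

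The repair is the paper's own proof, which is the mirror image of yours and needs no property of $\nabla_{\theta}^{2}l^{\epsilon}$ at all: since $\nabla_{\theta}l^{\epsilon}(\hat{\theta}^{\epsilon})=0$ and $\nabla_{\theta}l(\theta^{\ast})=0$, the uniform bound \eqref{eq:uniformGradlnApprox1} gives $|\nabla_{\theta}l(\hat{\theta}^{\epsilon})|\le R\epsilon$; one then expands the \emph{unperturbed} gradient, $\nabla_{\theta}l(\hat{\theta}^{\epsilon})=\left(\int_{0}^{1}\nabla_{\theta}^{2}l\left(\theta^{\ast}+t(\hat{\theta}^{\epsilon}-\theta^{\ast})\right)dt\right)(\hat{\theta}^{\epsilon}-\theta^{\ast})$, and lower-bounds the averaged Hessian by $T/2$ using the continuity of $\nabla_{\theta}^{2}l$ near $\theta^{\ast}$ and the identity $\nabla_{\theta}^{2}l(\theta^{\ast})=I(\theta^{\ast})$ (both supplied by Lemma \ref{lemABCGradApprox}), the invertibility of $I(\theta^{\ast})$, and Theorem \ref{theo:prob_conv} to guarantee $\hat{\theta}^{\epsilon}$ lies in the relevant neighbourhood for small $\epsilon$. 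Every ingredient is one the paper actually controls, and the conclusion follows with $C=2R/T$.
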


The proof of Theorem \ref{theo:asympSolnBias} relies on the following lemma
whose proof is given in Appendix B.

\begin{lemma}
\label{lemABCGradApprox} Assume (A1)-(A7). Then $\nabla_{\theta} l^{\epsilon}$, $\nabla_{\theta} l$ and $\nabla_{\theta}^{2} l$ exist for all $\theta \in G$ where $G$ is as in (A4) and (A5).  Furthermore
\begin{equation}
\sup_{\theta\in G}\left\vert \nabla_{\theta} l^{\epsilon}(\theta)-\nabla_{\theta} l(\theta
)\right\vert \leq R\epsilon\label{eq:uniformGradlnApprox1}%
\end{equation}
for some $R>0$ and $\nabla_{\theta}^{2} l (\theta^{\ast}) = I(\theta^{\ast})$.
\end{lemma}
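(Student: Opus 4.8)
The plan is to exploit the structure emphasised in \eqref{eq:approx}: the perturbed process \eqref{alg1pertHMM} is itself an HMM with the \emph{same} transition density $q_\theta$ and emission density $g^\epsilon_\theta$ from \eqref{EqnPertCondLaw}, so the standard HMM machinery applies to it uniformly in $\epsilon$. First I would record the elementary consequences of the assumptions for $g^\epsilon_\theta$. Differentiating \eqref{EqnPertCondLaw} under the integral sign (legitimate by (A4)--(A5) and dominated convergence) gives $\nabla_\theta g^\epsilon_\theta(y|x)=\nu(B^\epsilon_y)^{-1}\int_{B^\epsilon_y}\nabla_\theta g_\theta(y'|x)\,\nu(dy')$, and similarly for the second derivative, so $g^\epsilon_\theta$ is twice continuously differentiable on $G$. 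Since $g^\epsilon_\theta$ is an average of values of $g_\theta$, the bounds (A3) and (A5) are inherited by $g^\epsilon_\theta$ with constants independent of $\epsilon$ (in particular $g^\epsilon_\theta\geq\underline{c}_1$ and $|\nabla_\theta g^\epsilon_\theta|\leq \overline{c}_1\overline{c}_2$). Moreover (A6)--(A7) give, directly from \eqref{EqnPertCondLaw} and $|y'-y|\le\epsilon$ on $B^\epsilon_y$, the pointwise estimates $|g^\epsilon_\theta(y|x)-g_\theta(y|x)|\le L\epsilon$ and $|\nabla_\theta g^\epsilon_\theta(y|x)-\nabla_\theta g_\theta(y|x)|\le L\epsilon$; combined with the bounds above, a quotient-rule estimate yields $|\nabla_\theta\log g^\epsilon_\theta(y|x)-\nabla_\theta\log g_\theta(y|x)|\le R_0\epsilon$ uniformly in $x,y,\theta$.

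Existence of $\nabla_\theta l^\epsilon$, $\nabla_\theta l$ and $\nabla^2_\theta l$ on $G$ then follows by the route used for the MLE in \cite{doumouryd2004}. The conditional densities $p^\epsilon_\theta(y_1|y_{-n:0})$ are finite-horizon ratios of integrals of the smooth, uniformly bounded quantities $q_\theta$ and $g^\epsilon_\theta$, hence twice differentiable in $\theta$; the real work is to show that their normalised derivatives converge, as $n\to\infty$, to the derivatives of the infinite-horizon quantity and that one may interchange differentiation with both the expectation $\bar{\mathbb{E}}_{\theta^\ast}$ and the asymptotic limit. This interchange is the main obstacle, and it is precisely the content flagged in the discussion preceding the lemma: it is driven by the exponential forgetting of the filter and smoother guaranteed by (A3) (the stability estimate \eqref{lemFiltStabStandardeq2}), which makes the score and observed-information series converge geometrically and uniformly on $G$, with domination supplied by (A5) and its $g^\epsilon_\theta$-analogue. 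Because these bounds are uniform in $\epsilon$, the same argument delivers $\nabla_\theta l^\epsilon$ for every $\epsilon\ge 0$, and in particular $\nabla_\theta l=\nabla_\theta l^0$ and $\nabla^2_\theta l$.

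To obtain \eqref{eq:uniformGradlnApprox1} I would write both $\nabla_\theta l^\epsilon(\theta)$ and $\nabla_\theta l(\theta)$ through the same smoother-weighted (Fisher-identity) representation of the score, evaluated on a single stationary realisation of the \emph{unperturbed} process at $\theta^\ast$; note that the gradient of a log-density is insensitive to the $\theta$-independent choice of dominating measure, so the two scores are genuinely comparable even though $l^\epsilon$ and $l$ themselves are not. The difference then splits into two geometrically summable families of terms: one in which the emission scores $\nabla_\theta\log g^\epsilon_\theta$ and $\nabla_\theta\log g_\theta$ differ, controlled by the $R_0\epsilon$ estimate of the first paragraph, and one in which the smoothing distributions differ because the filtering recursion uses $g^\epsilon_\theta$ in place of $g_\theta$ on the same observations. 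Since $g^\epsilon_\theta/g_\theta=1+O(\epsilon)$ and the recursions forget their past geometrically, the stability-of-the-filter perturbation bound shows these smoothers differ by $O(\epsilon)$ in total variation uniformly in the horizon; summing both $O(\epsilon)$ contributions against the geometric weights gives $\sup_{\theta\in G}|\nabla_\theta l^\epsilon(\theta)-\nabla_\theta l(\theta)|\le R\epsilon$, with $R$ depending only on $\underline{c}_1,\overline{c}_1,\overline{c}_2,L$ and the forgetting rate. Finally, $\nabla^2_\theta l(\theta^\ast)=I(\theta^\ast)$ records the standard fact that, under (A4)--(A5), the Hessian of the asymptotic contrast $l$ at its maximiser equals the asymptotic Fisher information of \cite{doumouryd2004}; this follows from differentiating $l$ twice under the expectation and invoking the information-matrix identity at $\theta=\theta^\ast$. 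Once the forgetting estimates deliver uniform convergence of the derivative series, this $\epsilon$-comparison is routine, if lengthy, bookkeeping of geometrically decaying perturbations.
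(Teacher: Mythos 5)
Your proposal is correct in outline, and the first paragraph (the $\epsilon$-uniform inheritance of (A3), (A5) by $g^\epsilon_\theta$ from \eqref{EqnPertCondLaw}, and the bounds $|g^\epsilon_\theta-g_\theta|\le L\epsilon$, $|\nabla_\theta g^\epsilon_\theta-\nabla_\theta g_\theta|\le L\epsilon$ from (A6)--(A7)) is exactly the raw material the paper uses; likewise your treatment of existence of $\nabla_\theta l$, $\nabla_\theta l^\epsilon$, $\nabla^2_\theta l$ and of $\nabla^2_\theta l(\theta^\ast)=I(\theta^\ast)$ (uniform geometric convergence, interchange of limit and derivative, the information identity and \cite{doumouryd2004}) matches the paper's route. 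Where you genuinely diverge is in how you reach \eqref{eq:uniformGradlnApprox1}. The paper telescopes the difference of the two full-data scores through \emph{hybrid} observation sequences $Y_{1:i};Y^{\epsilon}_{i+1:n}$ (its display \eqref{lemABCGradApproxeq1}, then specialised to $Y^\epsilon_i=Y_i$): the point of the hybrid telescoping is that each increment is a difference of two conditional scores conditioned on \emph{identical} data, so the smoothing law $p_\theta(x_i|Y_{1:i-1};Y^\epsilon_{i+1:n};X_1=x)$ and its $\theta$-gradient are literally the same in both terms and cancel structurally; only the single-site quantities $g_\theta(Y_i|\cdot)$ versus $g^\epsilon_\theta(Y_i|\cdot)$ and their gradients differ, and these are $O(\epsilon)$ by (A6)--(A7). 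The price is that this representation of the conditional score involves the gradient of the filter, so the paper leans on the filter-gradient bound \eqref{lemGradFiltStabeq4}. You instead compare the two scores directly via the Fisher-identity (smoother-weighted) representation, splitting into (a) the emission-score difference, handled by your $R_0\epsilon$ estimate, and (b) the discrepancy between the smoothing laws of the perturbed and unperturbed models on the same data. Your form avoids filter gradients, but (b) hinges on a quantitative lemma you assert rather than prove: that an $O(\epsilon)$ multiplicative perturbation of every emission density moves the pairwise smoothing marginals by at most $O(\epsilon)$ in total variation, \emph{uniformly} in the horizon and the time index. This is true under (A3) --- forgetting damps the per-site perturbations so that $\sum_k \rho^{|i-k|}O(\epsilon)=O(\epsilon)$ --- but it is not among the paper's auxiliary results, and its cleanest proof is itself a site-by-site swap of $g^\epsilon_\theta$ for $g_\theta$ followed by a forgetting argument, i.e.\ essentially the paper's hybrid telescoping transplanted to the smoother level. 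So both routes work; the paper's applies the telescoping once, directly to the score, and thereby never needs to quantify how the smoother moves, whereas yours buys a simpler (Fisher-identity) score representation at the cost of an extra, nontrivial filter-robustness lemma.
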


\begin{proof}[Proof of Theorem \ref{theo:asympSolnBias}]
Since by assumption $I(\theta^{\ast})$ is invertible and thus positive definite it follows
that there exists some $T>0$ such that
\begin{equation}
\inf_{v:\left\vert v\right\vert >0}\frac{\left\vert I(\theta^{\ast
})v\right\vert }{\left\vert v\right\vert }\geq T.\label{eq:matPerturbation1}%
\end{equation}
By Lemma \ref{lemABCGradApprox}, $l(\theta)$ is twice
continuously differentiable on $G$ and so there exists a constant $\delta>0$
such that
\begin{equation}
\sup_{\left\vert \theta-\theta^{\ast}\right\vert \leq\delta}\left\Vert
\nabla_{\theta}^{2} l (\theta)-I(\theta^{\ast})\right\Vert \leq\frac{T}{2}%
.\label{eq:matPerturbation}%
\end{equation}
By Theorem \ref{theo:prob_conv} there exists a constant $\bar{\epsilon}>0$
such that for all $\epsilon\leq\bar{\epsilon}$,
\begin{equation}
\sup_{\theta\in\mathcal{T}^{\epsilon}}\left\vert \theta-\theta^{\ast
}\right\vert \leq\delta.\label{eq:distTheta_nThetaStar}%
\end{equation}
Consider any $\hat{\theta}^{\epsilon}\in\mathcal{T}^{\epsilon}$. By Lemma \ref{lemABCGradApprox} both
$\nabla_{\theta} l^{\epsilon}(\hat{\theta}^{\epsilon})$ and $\nabla_{\theta} l(\theta^{\ast})$ exist and clearly they must both be equal to zero and hence by (\ref{eq:uniformGradlnApprox1})
\begin{equation}
\left\vert \nabla_{\theta} l(\hat{\theta}^{\epsilon})\right\vert \leq R\epsilon
.\label{thmABCMLEquant1}%
\end{equation}
Further by the fundamental theorem of calculus
\begin{equation}
\label{thmABCMLEquant2}\nabla_{\theta} l(\hat{\theta}^{\epsilon})=\nabla_{\theta} l(\theta
^{\ast})+\left(  \int_{0}^{1}\nabla_{\theta}^{2}l\left(  \theta^{\ast}+t(\hat{\theta
}^{\epsilon}-\theta^{\ast})\right)  dt\right)  (\hat{\theta}^{\epsilon}%
-\theta^{\ast}).
\end{equation}
By (\ref{eq:matPerturbation1}), (\ref{eq:matPerturbation}) and
(\ref{eq:distTheta_nThetaStar}) it now follows that
\begin{equation} \label{thmABCMLEquant3}
\left\vert \left(  \int_{0}^{1}\nabla_{\theta}^{2}l\left(  \theta^{\ast}+t(\hat{\theta
}^{\epsilon}-\theta^{\ast})\right)  dt\right)  (\hat{\theta}^{\epsilon}%
-\theta^{\ast})\right\vert \geq\frac{T}{2}\left\vert \hat{\theta}^{\epsilon
}-\theta^{\ast}\right\vert
\end{equation}
The result now follows from \eqref{thmABCMLEquant1}, \eqref{thmABCMLEquant2} and \eqref{thmABCMLEquant3}.
\end{proof}

\begin{rem} \label{remsufficentstatstandardABCMLE}
In many cases the complete data sequence $\hat{Y}_{1} , \ldots , \hat{Y}_{n}$ is too high-dimensional 
and instead one performs inference using a summary statistic $\mathcal{S} ( \hat{Y}_{1} , \ldots , \hat{Y}_{n} )$ where $\mathcal{S} ( \cdots )$ is some mapping from $\mathbb{R}^{m} \times \cdots \mathbb{R}^{m}$ to a lower dimensional Euclidean space, e.g. see \cite{tavbalgridon1997}.  In general this mapping will destroy the Markovian structure of the data and the results derived in this section will not be applicable to ABC based parameter inference conducted using the corresponding summary statistic.

However in practice it is often the case that the mapping $\mathcal{S} ( \cdots )$ is of the form $\mathcal{S} ( \hat{Y}_{1} , \ldots , \hat{Y}_{n} ) = S(\hat{Y}_{1}), \ldots , S(\hat{Y}_{n})$ for some function $S(\cdot)$ that maps from $\mathbb{R}^{m}$ to a space $\mathbb{R}^{m^{\prime}}$ of lower dimension.  When this is true it is easy to see that the Markovian structure of the data is preserved.  Moreover suppose that assumptions (A1)-(A7) hold for the underlying HMM.  If the mapping $S(\cdot)$ preserves the identifiability of the system, that is to say if assumption (A1) also holds for the HMMs with observations $S(Y_{1}), S(Y_{2}) , \ldots $, then it is trivial to see that assumptions (A2)-(A7) will also be preserved for all reasonable choices of $S(\cdot)$ and thus that Theorems \ref{thmmisspecconv}, \ref{theo:prob_conv} and \ref{theo:asympSolnBias} will also hold for ABC MLE performed using the summary statistic.
\end{rem}

\section{Noisy Approximate Bayesian Computation}

\label{sec:noisyABC}

\subsection{Estimation Procedure} \label{SubSectionNoisyABCMLEProcDef}

In the previous section we showed that performing ABC MLE is equivalent to
estimating the parameter by choosing the value of the maximizer of the likelihoods of the perturbed HMMs $\left\{  X_{k} ,
Y^{\epsilon}_{k} \right\}  _{k \geq0}$ defined in (\ref{alg1pertHMM}).
Since
the likelihoods over which we maximise are misspecified with respect to the law of the process that is
generating the data the resulting estimator has an inherent asymptotic bias.

Suppose now that a sequence of observations $\hat{Y}_{1} , \ldots, \hat{Y}_{n}$ from the unperturbed HMM corresponding to some $\theta^{\ast} \in\Theta$ is given.  The sequence of noisy observations $\hat{Y}_{1} + \epsilon Z_{1} , \ldots, \hat{Y}_{n} +
\epsilon Z_{n}$ where $Z_{k}\overset{\text{i.i.d.}}{\sim}\mathcal{U}_{B^{1}_{0}}$, $k\geq1$ has the same law as a sample from the corresponding perturbed HMM defined in \eqref{alg1pertHMM}.  As a result estimating $\theta^{\ast}$ by applying the  ABC MLE estimator \eqref{ABCstandestimatorargsupdiffeq999999999999111} to the {\em noisy} observations $\hat{Y}_{1} + \epsilon Z_{1} , \ldots, \hat{Y}_{n} + \epsilon Z_{n}$ in place of $\hat{Y}_{1} , \ldots, \hat{Y}_{n}$, is statistically equivalent to estimating $\theta^{\ast}$ by applying standard MLE to the perturbed HMMs \eqref{alg1pertHMM}.  Clearly one would expect that the resulting estimator would inherit the properties
of MLE, in particular that it would be asymptotically consistent.
In light of the discussion and remarks immediately following the definition of Procedure \ref{algABCMLE} these observations lead one to the following noisy ABC MLE procedure:

\begin{procedure}
[Noisy ABC MLE]\label{algNoisyABCMLE} Given $\epsilon> 0$ and data $\hat{Y}_{1} ,
\ldots, \hat{Y}_{n}$ estimate $\theta^{\ast}$ with
\begin{equation} \label{ProcNoisyABCestimator1defeq}
\tilde{\theta}^{\epsilon}_{n} = \arg\sup_{\theta\in\Theta} \mathbb{P}_{\theta}\left(  Y^{\epsilon}_{1} \in B_{\hat{Y}_{1} + \epsilon \hat{Z}_{1}}^{\epsilon} , \ldots,
Y^{\epsilon}_{n} \in B_{\hat{Y}_{n} + \epsilon \hat{Z}_{n}}^{\epsilon} \right) .
\end{equation}
\end{procedure}

\begin{rem}
Procedure \ref{algNoisyABCMLE} is a likelihood-based version of the noisy ABC method in \cite{feapra2010}.
\end{rem}

\subsection{Theoretical Results}

In this section we investigate mathematically the noisy ABC MLE procedure defined in Section \ref{SubSectionNoisyABCMLEProcDef}.   In particular we
show that under the assumptions made in Section
\ref{subsecParticularAssumptions} that the noisy ABC MLE inherits the properties of
asymptotic consistency and normality from the MLE. Further we provide an analysis of the performance of the noisy ABC MLE  relative to the standard MLE by comparing their asymptotic variances. It is first shown that the asymptotic Fisher information of the ABC MLE is strictly less than that of the
MLE and hence that the asymptotic variance of the ABC MLE estimator is
strictly greater.  Thus it follows that the noisy ABC MLE procedure comes at the
cost of a loss in accuracy relative to that of the standard ABC procedure. Finally we show that this loss in
accuracy can be made arbitrarily small by choosing $\epsilon$ small enough.

The first result establishes that under (A1)-(A3) the noisy ABC MLE inherits
the property of asymptotic consistency.

\begin{theo}
\label{thmNoisyABCAsyCon} Assume (A1)-(A3). Then Procedure
\ref{algNoisyABCMLE} is asymptotically consistent.
\end{theo}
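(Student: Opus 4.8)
The key observation, established in Section \ref{sec:noisyABC}, is that the noisy ABC MLE $\tilde{\theta}^{\epsilon}_{n}$ is statistically equivalent to applying the \emph{standard} MLE to data generated by the perturbed HMM $\{X_{k}, Y^{\epsilon}_{k}\}_{k \geq 0}$ defined in \eqref{alg1pertHMM}. That is, feeding the noisy observations $\hat{Y}_{k} + \epsilon \hat{Z}_{k}$ into the estimator is the same as performing genuine maximum likelihood estimation on a correctly-specified model, namely the perturbed process itself. Consequently, the entire problem reduces to verifying that the perturbed HMM satisfies the standard hypotheses under which the classical MLE consistency theorem of \cite{doumouryd2004} applies. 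So the plan is: state that equivalence precisely, then check that the perturbed HMM inherits the analogues of (A1)--(A3), and finally invoke the existing consistency theorem as a black box.

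\textbf{Key steps.} First I would confirm the identifiability analogue of (A1) for the perturbed family. Here one must show that $\theta \mapsto \bar{\mathbb{P}}_{\theta}$ remains injective after convolution with the uniform noise: distinct stationary laws of $\{X_{k}, Y_{k}\}$ must yield distinct stationary laws of $\{X_{k}, Y_{k} + \epsilon Z_{k}\}$. Since convolution with a fixed probability measure is injective on finite measures (the characteristic function of $\mathcal{U}_{B^{1}_{0}}$ is not identically zero, so it can be divided out on a dense set of frequencies), identifiability is preserved and (A1) carries over to the perturbed model. Second, I would check that the perturbed conditional densities $g^{\epsilon}_{\theta}(y \vert x)$ defined in \eqref{EqnPertCondLaw} satisfy the continuity and uniform-boundedness analogues of (A2)--(A3). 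Continuity in $\theta$ follows from (A2) together with dominated convergence in the defining integral, and the two-sided bound $\underline{c}_{1} \leq g^{\epsilon}_{\theta}(y \vert x) \leq \overline{c}_{1}$ follows immediately from (A3), since $g^{\epsilon}_{\theta}$ is an average of values of $g_{\theta}$ each lying in $[\underline{c}_{1}, \overline{c}_{1}]$; the transition density $q_{\theta}$ is unchanged, so its hypotheses transfer verbatim.

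\textbf{The main obstacle.} The delicate point is the identifiability step, i.e.\ establishing (A1) for the perturbed process rather than merely for the original one. The hypothesis (A1) as stated is a statement about the \emph{original} observation laws, and one needs to argue that it implies the corresponding statement for the \emph{convolved} laws. This requires that the noise convolution does not collapse two distinct parameter values onto a common perturbed stationary law; injectivity of convolution gives this, but care is needed because the equality of laws in (A1) is a statement about the full doubly-infinite observation sequence, so the argument must be run at the level of finite-dimensional distributions and then lifted to the whole process. Once identifiability is secured, the remaining verifications are routine consequences of (A2)--(A3) as indicated above, and the theorem follows directly from the consistency result for well-specified HMMs in \cite{doumouryd2004}, exactly as invoked in the proof of Theorem \ref{thmmisspecconv}.
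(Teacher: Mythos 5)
Your proposal follows essentially the same route as the paper's own proof: reduce the noisy ABC MLE to a correctly specified MLE for the perturbed HMM \eqref{alg1pertHMM}, verify that (A3) transfers verbatim (since $g^{\epsilon}_{\theta}$ averages values of $g_{\theta}$), that (A2) follows by dominated convergence, and that (A1) is preserved because convolution with the uniform-ball noise is injective (the paper packages this as Lemma \ref{pertdistdifferability}, applied to finite-dimensional laws exactly as you describe), before invoking the consistency theorem of \cite{doumouryd2004}. The only cosmetic difference is that your injectivity justification should appeal to the fact that the characteristic function of $\mathcal{U}_{B^{1}_{0}}$ vanishes only on a Lebesgue-null set (it is analytic, being the transform of a compactly supported measure), rather than merely to its being not identically zero; this is precisely the hypothesis of the paper's lemma.
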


\begin{proof}
It is sufficient to show that if (A1)-(A3) hold for the original HMM then they
also hold for the perturbed HMM.
Recall, for the perturbed HMM, the transitions are as for the original
HMM and the likelihood is as \eqref{EqnPertCondLaw}. Thus (A3) for the original model immediately implies (A3) for the perturbed model.

In order to establish that (A2) holds for the perturbed model it is sufficient
to observe that continuity of the mapping $\theta\to g^{\epsilon}_{\theta} ( y
\vert x)$ for any $x \in\mathcal{X}$, $y \in\mathcal{Y}$ follows from
continuity of the mapping $\theta\to g_{\theta} ( y \vert x)$, uniform
boundedness of $g_{\theta} ( y \vert x)$ (ie. (A3)) and the dominated convergence theorem.

It remains to show that (A1) is also inherited by the perturbed model. This
assumption is equivalent to demanding that for every $\theta^{\prime}
\neq\theta$ there exists some $r$ such that
\begin{equation}
\label{NoisyABCHMMA5}\mathcal{L}_{\theta} \left(  Y_{1}, \ldots, Y_{r}
\right)  \neq\mathcal{L}_{\theta^{\prime}} \left(  Y_{1}, \ldots, Y_{r}
\right)
\end{equation}
where $\mathcal{L}_{\theta} \left(  \cdot\right)  $ denotes the law of the
process $\left\{  Y_{k} \right\}  _{k \geq0}$. However by applying Lemma
\ref{pertdistdifferability} it immediately follows that (\ref{NoisyABCHMMA5})
holds if and only if
\[
\mathcal{L}_{\theta} \left(  Y^{\epsilon}_{1}, \ldots, Y^{\epsilon}_{r}
\right)  \neq\mathcal{L}_{\theta^{\prime}} \left(  Y^{ \epsilon}_{1}, \ldots,
Y^{\epsilon}_{r} \right)
\]
for all $\epsilon$ and so (A1) holds for the original HMMs if and only if it
also holds for the perturbed HMMs.
\end{proof}

Next we consider the question of asymptotic normality.  In \cite{doumouryd2004} it was shown that under conditions (A1)-(A5) the MLE for HMMs has asymptotic Fisher information matrix $I(\theta^{\ast})$ where

\[
I(\theta^{\ast})=\bar{\mathbb{E}}_{\theta^{\ast}}\left[  \nabla_{\theta}\log p_{\theta^{\ast
}}\left(  Y_{1} \vert Y_{-\infty:0}\right)  \nabla_{\theta}\log p_{\theta^{\ast}}\left(
Y_{1} \vert Y_{-\infty:0},\right)  ^{T}\right]  .
\]
Further it was shown that if $I(\theta^{\ast})$ is invertible then the MLE is
asymptotically normal with asymptotic variance equal to $I(\theta^{\ast}%
)^{-1}$. It follows from the proof of Theorem \ref{thmNoisyABCAsyCon} that if
(A1)-(A3) hold for the original HMM then they also hold for the perturbed HMM.
Further if (A4) and (A5) hold for the original HMM then a simple application
of the dominated convergence theorem shows that they also hold for the
perturbed HMM. Thus, under assumptions (A1)-(A5) the asymptotic Fisher
information matrix of the noisy ABC MLE exists and is equal to
$I^{\epsilon}(\theta^{\ast})$ where
\[
I^{\epsilon}(\theta^{\ast})=\bar{\mathbb{E}}_{\theta^{\ast}}\left[  \nabla_{\theta}\log
p_{\theta^{\ast}}^{\epsilon}\left(  Y^{\epsilon}_{1} \vert Y_{-\infty:0}^{\epsilon}\right)
\nabla_{\theta}\log p_{\theta^{\ast}}^{\epsilon}\left(  Y_{1}^{\epsilon} \vert Y_{-\infty:0}^{\epsilon}\right)  ^{T}\right]  .
\]
Moreover if $I^{\epsilon}(\theta^{\ast})$ is invertible then the noisy ABC MLE estimator will be asymptotically normal with asymptotic variance equal to $I^{\epsilon}(\theta^{\ast})^{-1}$.  Using these results we can analyze the asymptotic performance of the noisy ABC MLE estimator relative to that of the standard MLE estimator by comparing the two Fisher information matrices.  Unfortunately one cannot in general make any explicit quantitative comparisons
between these two quantities, however the
following result establishes some qualitative relations between the two.

\begin{theo}
\label{asymnormthm} Assume (A1)-(A5). Then:

\begin{enumerate}
[1.]

\item $I ( \theta^{\ast} ) \geq I^{\epsilon}( \theta^{\ast} )$. Further if $\nu$ is connected and $I(\theta^{\ast}) \neq 0$ (see Section \ref{subsecNotandassumptions}) then the inequality is strict.

\item $I^{\epsilon}( \theta^{\ast} ) \to0$ as $\epsilon\to\infty$.

\item $I^{\epsilon}( \theta^{\ast} ) \to I ( \theta^{\ast} )$ as $\epsilon
\to0$. Hence for epsilon sufficiently small the ABC MLE is asymptotically
normal with asymptotic variance equal to $I^{\epsilon}( \theta^{\ast} )^{-1}$.

\item If (A6) and (A7) hold then $\left\Vert I( \theta^{\ast} ) - I^{\epsilon
}( \theta^{\ast} ) \right\Vert = O ( \epsilon^{2} )$.
\end{enumerate}
\end{theo}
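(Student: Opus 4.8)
The unifying idea is that the perturbation $Y^{\epsilon}_{k}=Y_{k}+\epsilon Z_{k}$ in \eqref{alg1pertHMM} adds \emph{parameter-independent} noise, so the perturbed observations are a coarsening of the clean ones. For any finite block the perturbed likelihood is the clean likelihood convolved with the $\theta$-free noise kernel (the per-observation instance of this being \eqref{EqnPertCondLaw}); differentiating this relation in $\theta$ and dividing by the perturbed likelihood expresses the perturbed score as a conditional expectation of the clean score. Passing this from finite blocks to the infinite-history asymptotic scores, and justifying the interchange of differentiation with the asymptotic limit via the filter-stability estimates of Appendix B, I would show that the perturbed asymptotic conditional score satisfies $S^{\epsilon}:=\nabla_{\theta}\log p^{\epsilon}_{\theta^{\ast}}(Y^{\epsilon}_{1}\vert Y^{\epsilon}_{-\infty:0})=\bar{\mathbb{E}}_{\theta^{\ast}}[\,S\mid\mathcal{F}^{\epsilon}\,]$, where $S:=\nabla_{\theta}\log p_{\theta^{\ast}}(Y_{1}\vert Y_{-\infty:0})$ is the clean asymptotic score and $\mathcal{F}^{\epsilon}$ is generated by the perturbed observations. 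Since both scores are centred under $\bar{\mathbb{P}}_{\theta^{\ast}}$, the law of total covariance then yields the \emph{asymptotic missing information decomposition}
\begin{equation*}
I(\theta^{\ast})=I^{\epsilon}(\theta^{\ast})+\Delta^{\epsilon},\qquad \Delta^{\epsilon}:=\bar{\mathbb{E}}_{\theta^{\ast}}\big[\,\mathrm{Cov}_{\theta^{\ast}}(S\mid\mathcal{F}^{\epsilon})\,\big]\succeq 0,
\end{equation*}
from which all four claims are read off.

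Part 1 is immediate from $\Delta^{\epsilon}\succeq 0$. For strictness I would show $\Delta^{\epsilon}$ and $I(\theta^{\ast})$ share the same kernel: if $v^{T}\Delta^{\epsilon}v=0$ then the directional score $v^{T}S$ equals its projection $v^{T}S^{\epsilon}$, i.e.\ it is $\mathcal{F}^{\epsilon}$-measurable, and conversely $v^{T}I(\theta^{\ast})v=0$ forces both PSD summands to vanish on $v$. Because $Y^{\epsilon}$ blurs $Y$ by an independent uniform of radius $\epsilon$, connectedness of the support of $\nu$ lets one deconvolve: a directional score recoverable after such averaging must be $\nu$-a.e.\ constant in the smoothed argument, whence $v^{T}I(\theta^{\ast})v=\mathrm{Var}_{\theta^{\ast}}(v^{T}S)=0$. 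Thus $I(\theta^{\ast})\neq 0$ supplies a $v$ with $v^{T}\Delta^{\epsilon}v>0$, giving $I(\theta^{\ast})\neq I^{\epsilon}(\theta^{\ast})$. I expect this deconvolution/constancy step to be the main obstacle, since it is the only place where the geometry of $\nu$, rather than soft projection arguments, genuinely enters.

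Parts 2 and 3 concern the two limits of $\epsilon$. As $\epsilon\to\infty$ the averaging ball in \eqref{EqnPertCondLaw} exhausts $\mathcal{Y}$, so $g^{\epsilon}_{\theta}(y\vert x)\to\nu(\mathcal{Y})^{-1}\int g_{\theta}(y'\vert x)\,\nu(dy')=\nu(\mathcal{Y})^{-1}$, independent of both $\theta$ and $x$; differentiating under the integral (legitimate by (A2)--(A3) and (A5)) gives $\nabla_{\theta}g^{\epsilon}_{\theta}(y\vert x)\to 0$. Hence the perturbed emissions become asymptotically uninformative, $S^{\epsilon}\to 0$, and dominated convergence with the uniform bounds of (A3) yields $I^{\epsilon}(\theta^{\ast})\to 0$. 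For the limit $\epsilon\to 0$ I would instead show $\Delta^{\epsilon}\to 0$: as the noise vanishes the perturbed observations converge to the clean ones and $S^{\epsilon}=\bar{\mathbb{E}}_{\theta^{\ast}}[S\mid\mathcal{F}^{\epsilon}]\to S$ in $L_{2}(\bar{\mathbb{P}}_{\theta^{\ast}})$, this convergence following from the continuity in $\epsilon$ of the perturbed scores underlying Lemmas \ref{lemexpllhcont} and \ref{lemABCGradApprox}. Then $\Delta^{\epsilon}=\bar{\mathbb{E}}_{\theta^{\ast}}|S-S^{\epsilon}|^{2}\to 0$, so $I^{\epsilon}(\theta^{\ast})\to I(\theta^{\ast})$; if $I(\theta^{\ast})$ is invertible, continuity makes $I^{\epsilon}(\theta^{\ast})$ invertible for all small $\epsilon$, and the asymptotic-normality result recorded before the theorem applies with variance $I^{\epsilon}(\theta^{\ast})^{-1}$.

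Finally, Part 4 quantifies the rate under (A6)--(A7). Combining the Lipschitz bounds with the lower bound (A3) through the quotient rule $\nabla_{\theta}\log g_{\theta}=\nabla_{\theta}g_{\theta}/g_{\theta}$ shows the clean conditional score $S$ is Lipschitz in its observation arguments. Since $S^{\epsilon}$ is the $L_{2}$-best $\mathcal{F}^{\epsilon}$-measurable approximation of $S$, I would bound $\|S-S^{\epsilon}\|_{L_{2}}\leq\|S-\tilde{S}\|_{L_{2}}$, where $\tilde{S}$ is the clean score function evaluated at the \emph{noisy} inputs $Y^{\epsilon}$ (an $\mathcal{F}^{\epsilon}$-measurable competitor). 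The inputs differ by $\epsilon Z$, so Lipschitzness gives $\|S-\tilde{S}\|_{L_{2}}=O(\epsilon)$, whence $\|S-S^{\epsilon}\|_{L_{2}}=O(\epsilon)$ and, using $\|vv^{T}\|=|v|^{2}$ together with Jensen's inequality, $\|\Delta^{\epsilon}\|\leq\bar{\mathbb{E}}_{\theta^{\ast}}|S-S^{\epsilon}|^{2}=O(\epsilon^{2})$, as claimed.
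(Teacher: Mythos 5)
Your overall architecture --- a missing-information decomposition $I(\theta^{\ast})=I^{\epsilon}(\theta^{\ast})+\Delta^{\epsilon}$ with $\Delta^{\epsilon}\succeq 0$, from which the four claims are read off --- is exactly the paper's strategy (Lemmas \ref{asymmissinf} and \ref{lemABCMLEcomp}). However, the identity on which your entire decomposition rests is false for dependent observations. You claim that the perturbed asymptotic conditional score satisfies $S^{\epsilon}=\nabla_{\theta}\log p^{\epsilon}_{\theta^{\ast}}(Y^{\epsilon}_{1}\vert Y^{\epsilon}_{-\infty:0})=\bar{\mathbb{E}}_{\theta^{\ast}}[S\mid\mathcal{F}^{\epsilon}]$ with $S=\nabla_{\theta}\log p_{\theta^{\ast}}(Y_{1}\vert Y_{-\infty:0})$ and $\mathcal{F}^{\epsilon}=\sigma(Y^{\epsilon}_{-\infty:\infty})$. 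The Fisher/tower identity does hold for \emph{block} scores, $\nabla_{\theta}\log p^{\epsilon}_{\theta}(Y^{\epsilon}_{1:n})=\mathbb{E}[\nabla_{\theta}\log p_{\theta}(Y_{1:n})\vert Y^{\epsilon}_{1:n}]$ (this is the paper's \eqref{lemasymissinfpfeq201}--\eqref{lemasymissinfpfeq1001}), but it does not pass to the individual conditional-score increments: $\mathbb{E}[\nabla_{\theta}\log p_{\theta}(Y_{k}\vert Y_{1:k-1})\vert Y^{\epsilon}_{1:n}]$ genuinely depends on the noisy observations \emph{after} time $k$ (which carry information about $Y_{k}$ through the hidden-state dependence), whereas $\nabla_{\theta}\log p^{\epsilon}_{\theta}(Y^{\epsilon}_{k}\vert Y^{\epsilon}_{1:k-1})$ does not; conditional expectation does not commute with the chain-rule decomposition. (In the i.i.d.\ case the two agree, which is why your intuition works there, but for an HMM, e.g.\ a directly observed AR(1)-type chain, $\mathbb{E}[S\mid\mathcal{F}^{\epsilon}]$ is not even $\sigma(Y^{\epsilon}_{-\infty:1})$-measurable while $S^{\epsilon}$ is.) Consequently the law of total covariance does not yield $I=I^{\epsilon}+\bar{\mathbb{E}}[\mathrm{Cov}(S\mid\mathcal{F}^{\epsilon})]$, and since Part 1 (strictness), Part 3, and Part 4 of your argument are all built on this projection property ($S^{\epsilon}$ as the $L_{2}$-best $\mathcal{F}^{\epsilon}$-measurable approximation of $S$), they inherit the gap.

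The missing idea is the paper's telescoping construction: one replaces clean observations by noisy ones a single index at a time, comparing scores of the \emph{mixed} sequences $Y_{1:i};Y^{\epsilon}_{i+1:n}$ and $Y_{1:i-1};Y^{\epsilon}_{i:n}$, applies the Fisher identity at each step so the cross terms cancel, and then uses stationarity together with the filter-stability and gradient-stability estimates of Appendices A and B to pass to the limit. The correct correction term $\Delta^{\epsilon}$ is then not a conditional covariance given the all-noisy $\sigma$-field, but the difference of the conditional Fisher informations of $Y_{0}$ and $Y^{\epsilon}_{0}$ given the mixed infinite sequence $Y_{-\infty:-1};Y^{\epsilon}_{1:\infty}$ (Lemma \ref{asymmissinf}). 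Once that decomposition is in hand, the techniques you propose for the individual parts do closely parallel the paper's: your connected-support deconvolution argument for strictness is Lemma \ref{lemnoiseinfloss}, your flattening of $g^{\epsilon}_{\theta}$ as $\epsilon\to\infty$ is the substance of Lemmas \ref{lemlargenoiseinfasym} and \ref{lemcondprobsatbasepsilongotozero}, and your Lipschitz argument for the $O(\epsilon^{2})$ rate matches part 4 of Lemma \ref{lemABCMLEcomp} --- but each must be rebased onto the mixed-sequence decomposition (and, for the rate, applied to the explicit integral representations \eqref{lemasymmissinfeq5001}--\eqref{lemasymmissinfeq5002} rather than to a projection bound that is no longer available).
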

Theorem \ref{asymnormthm} tells us that asymptotic variance of the noisy ABC MLE estimator is strictly greater than that of the MLE estimator and hence that there is a loss in accuracy relative
to the MLE in using noisy ABC MLE.  For very large values of $\epsilon$ the
asymptotic variance of the noisy ABC MLE grows without bound and the loss in accuracy becomes almost complete.  Thus if one chooses values of $\epsilon$ which are too large the noisy ABC MLE becomes ineffective. Furthermore we have shown that by taking small enough values of $\epsilon$ the
loss in accuracy can be be made arbitrarily small and hence that we can obtain (ignoring computational issues) a
performance of the noisy ABC MLE arbitrarily close to that of
the MLE.  Finally, the theorem provides a rate of convergence for the Fisher information matricies for when the likelihoods obey certain simple Lipschitz assumptions.

The proof of Theorem \ref{asymnormthm} is based on the following lemma, see
Appendix B for the proof.

\begin{lemma}
\label{asymmissinf} Assume (A1)-(A5). Then
\[
I( \theta^{\ast} ) = I^{\epsilon}( \theta^{\ast} ) + \bar{\mathbb{E}}%
_{\theta^{\ast}} \left[  I^{ Y_{0} : Y^{\epsilon}_{0} }_{ Y_{-\infty:-1};Y^{\epsilon}_{1: \infty}} ( \theta^{\ast} ) \right]
\]
where for every doubly infinite sequence $Y_{-\infty:-1};Y^{\epsilon}_{1: \infty}$ the random variable $I^{ Y_{0} : Y^{\epsilon}_{0} }_{Y_{-\infty:-1};Y^{\epsilon}_{1: \infty}} ( \theta^{\ast} )$ is equal to the difference in the Fisher
informations of the conditional laws of $Y_{0}$ and $Y^{\epsilon}_{0} $ given
$Y_{-\infty:-1};Y^{\epsilon}_{1: \infty}$, that is
\begin{align*}
\lefteqn{I^{ Y_{0} : Y^{\epsilon}_{0} }_{Y_{-\infty:-1};Y^{\epsilon}_{1: \infty}} ( \theta^{\ast} ) := } \\
& \qquad \bar{\mathbb{E}}_{\theta^{\ast}} \bigg[ \nabla_{\theta} \log p_{\theta^{\ast}} \left( Y_{0} \vert Y_{-\infty:-1};Y^{\epsilon}_{1:\infty} \right) \cdot \\
& \qquad \qquad \qquad \qquad \nabla_{\theta} \log p_{\theta^{\ast}} \left( Y_{0} \vert Y_{-\infty:-1};Y^{\epsilon}_{1:\infty} \right)^{T} \vert Y_{-\infty:-1};Y^{\epsilon}_{1:\infty} \bigg] \\
&\qquad - \bar{\mathbb{E}}_{\theta^{\ast}} \bigg[ \nabla_{\theta} \log p_{\theta^{\ast}} \left( Y^{\epsilon}_{0} \vert Y_{-\infty:-1};Y^{\epsilon}_{1:\infty} \right) \cdot \\
& \qquad \qquad \qquad \qquad  \nabla_{\theta} \log p_{\theta^{\ast}} \left( Y^{\epsilon}_{0} \vert Y_{-\infty:-1};Y^{\epsilon}_{1:\infty} \right)^{T} \vert Y_{-\infty:-1};Y^{\epsilon}_{1:\infty} \bigg] .
\end{align*}
\end{lemma}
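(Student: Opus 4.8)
The plan is to realize the claimed identity as a Louis-type missing-information decomposition, obtained by interpolating one coordinate at a time between the fully noisy observation sequence $Y^{\epsilon}_{1:n}$ and the fully clean sequence $Y_{1:n}$, and then passing to the stationary limit. First I would recall from \cite{doumouryd2004} that under (A1)-(A5) both the clean HMM and the perturbed HMM (the latter shown in the proof of Theorem \ref{thmNoisyABCAsyCon}, and the discussion following it, to inherit (A1)-(A5)) admit asymptotic Fisher informations realized as normalised limits of the score second moments. Writing $\mathcal{I}(W):=\bar{\mathbb{E}}_{\theta^{\ast}}[\nabla_{\theta}\log p_{\theta^{\ast}}(W)\,\nabla_{\theta}\log p_{\theta^{\ast}}(W)^{T}]$ for the finite-dimensional Fisher information of a random vector $W$ at $\theta^{\ast}$, one has $I(\theta^{\ast})=\lim_{n}n^{-1}\mathcal{I}(Y_{1:n})$ and $I^{\epsilon}(\theta^{\ast})=\lim_{n}n^{-1}\mathcal{I}(Y^{\epsilon}_{1:n})$, the key point being that the channel $Y^{\epsilon}_{k}=Y_{k}+\epsilon Z_{k}$ is a $\theta$-independent degradation of $Y_{k}$, so $Y^{\epsilon}$ carries no more information than $Y$.

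The combinatorial core is a single-coordinate swap. Define the hybrid sequences $V^{(j)}=(Y_{1},\ldots,Y_{j},Y^{\epsilon}_{j+1},\ldots,Y^{\epsilon}_{n})$, so $V^{(0)}=Y^{\epsilon}_{1:n}$ and $V^{(n)}=Y_{1:n}$, and set $R_{j}=(Y_{1:j-1},Y^{\epsilon}_{j+1:n})$. Telescoping gives
\begin{equation*}
\mathcal{I}(Y_{1:n}) - \mathcal{I}(Y^{\epsilon}_{1:n}) = \sum_{j=1}^{n} \Big( \mathcal{I}\big((Y_{j},R_{j})\big) - \mathcal{I}\big((Y^{\epsilon}_{j},R_{j})\big) \Big) .
\end{equation*}
I would then prove the single-swap identity
\begin{equation*}
\mathcal{I}\big((Y_{j},R_{j})\big) - \mathcal{I}\big((Y^{\epsilon}_{j},R_{j})\big) = \bar{\mathbb{E}}_{\theta^{\ast}}\big[\, \mathcal{I}(Y_{j}\vert R_{j}) - \mathcal{I}(Y^{\epsilon}_{j}\vert R_{j})\,\big]
\end{equation*}
by decomposing the joint score as $\nabla_{\theta}\log p_{\theta^{\ast}}(Y_{j},R_{j})=\nabla_{\theta}\log p_{\theta^{\ast}}(R_{j})+\nabla_{\theta}\log p_{\theta^{\ast}}(Y_{j}\vert R_{j})$. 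Since $R_{j}$ is \emph{common} to the two configurations and the conditional score has conditional mean zero at $\theta^{\ast}$ (differentiation under the integral being licensed by (A4)-(A5)), the cross term vanishes and the law of total variance yields $\mathcal{I}((Y_{j},R_{j}))=\mathcal{I}(R_{j})+\bar{\mathbb{E}}_{\theta^{\ast}}[\mathcal{I}(Y_{j}\vert R_{j})]$, with the analogous identity for $Y^{\epsilon}_{j}$. The $\mathcal{I}(R_{j})$ contributions then cancel in the difference.

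To conclude I would divide the telescoped sum by $n$ and let $n\to\infty$: the left-hand side tends to $I(\theta^{\ast})-I^{\epsilon}(\theta^{\ast})$. On the right, shifting the swapped index to time $0$ exhibits each summand as a conditional Fisher-information difference whose conditioning is a clean past $Y_{1:j-1}$ and a noisy future $Y^{\epsilon}_{j+1:n}$; in the limit these become $Y_{-\infty:-1}$ and $Y^{\epsilon}_{1:\infty}$ respectively, so the Cesàro average converges to $\bar{\mathbb{E}}_{\theta^{\ast}}[I^{Y_{0}:Y^{\epsilon}_{0}}_{Y_{-\infty:-1};Y^{\epsilon}_{1:\infty}}(\theta^{\ast})]$, and rearranging gives the lemma. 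I expect the main obstacle to be precisely this last convergence: showing that the finite-window conditional Fisher informations converge, uniformly enough to interchange the Cesàro average with the limit and the expectation, to the two-sided infinite-conditioning quantities. This is where the geometric forgetting of the conditional laws implied by (A3) (the filter-stability estimates used elsewhere in the paper, cf.\ \cite{doumouryd2004}) controls the dependence of the central coordinate on the growing windows, while the uniform score bounds from (A5) bound the conditional Fisher informations and supply the dominating function for the limit; this is the continuity-of-asymptotic-quantities machinery that the paper locates in Appendix B.
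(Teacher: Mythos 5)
Your proposal is correct and follows essentially the same route as the paper's proof: the same representation of $I(\theta^{\ast})$ and $I^{\epsilon}(\theta^{\ast})$ as normalized limits of score second moments from \cite{doumouryd2004}, the same telescoping over hybrid sequences that swap one coordinate at a time from noisy to clean, the same use of stationarity to center the swapped coordinate at time $0$, and the same filter-stability and uniform-score-bound machinery (the paper's Appendix B lemmas) to pass the Ces\`aro average to the two-sided infinite-conditioning limit. The only difference is bookkeeping: the paper establishes the per-swap identity via the Fisher identity (the coarser score is the conditional expectation of the finer score, using that the noise channel is $\theta$-free) and carries the outer product of the score \emph{difference} through the limiting argument, converting to a difference of conditional Fisher informations only at the very end, whereas you perform that conversion already at finite $n$ via the chain-rule decomposition and conditional-mean-zero property of conditional scores.
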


\begin{rem}
The quantity $I^{ Y_{0} : Y^{\epsilon}_{0} }_{Y_{-\infty:-1};Y^{\epsilon}_{1: \infty}} ( \theta^{\ast} )$ is also equal to the missing information in
the conditional law of $Y^{\epsilon}_{0}$ relative to that in the conditional law of $Y_{0}$ (where both laws are conditioned on $Y_{-\infty:-1};Y^{\epsilon}_{1: \infty}$). Here the term missing information is meant in the sense of that proposed for i.i.d. random variables in \cite{orcwoo1972}. Hence, Lemma
\ref{asymmissinf} can be considered as a conditional asymptotic missing
information principle for HMMs with observations perturbed by uniform additive
noise.
\end{rem}

Theorem \ref{asymnormthm} is then an immediate corollary of the following
lemma which establishes the behaviour of $I^{ Y_{0} : Y^{\epsilon}_{0}
}_{Y_{-\infty:-1};Y^{\epsilon}_{1: \infty}} ( \theta^{\ast} )$ for
different values of $\epsilon$.

\begin{lemma}
\label{lemABCMLEcomp} Assume (A1)-(A5). Then:

\begin{enumerate}
[1.]

\item $\bar{\mathbb{E}}_{\theta^{\ast}} \left[  I^{ Y_{0} : Y^{\epsilon}_{0}
}_{Y_{-\infty:-1};Y^{\epsilon}_{1: \infty}} ( \theta^{\ast} ) \right]  $ is
positive semi-definite. Further if $\nu$ is connected and $I(\theta^{\ast}) \neq 0$ then
$\bar{\mathbb{E}}_{\theta^{\ast}} \left[  I^{ Y_{0} : Y^{\epsilon
}_{0} }_{Y_{-\infty:-1};Y^{\epsilon}_{1: \infty}} ( \theta^{\ast} )
\right]  \neq0$ for any $\epsilon> 0$.

\item $\bar{\mathbb{E}}_{\theta^{\ast}} \left[  I^{ Y_{0} : Y^{\epsilon}_{0}
}_{Y_{-\infty:-1};Y^{\epsilon}_{1: \infty}} ( \theta^{\ast} ) \right]  \to
I ( \theta^{\ast} )$ as $\epsilon\to\infty$.

\item $\bar{\mathbb{E}}_{\theta^{\ast}} \left[  I^{ Y_{0} : Y^{\epsilon}_{0}
}_{Y_{-\infty:-1};Y^{\epsilon}_{1: \infty}} ( \theta^{\ast} ) \right]
\to0$ as $\epsilon\to0$.

\item Assume that (A6) and (A7) also hold. Then $\left\Vert \bar{\mathbb{E}}%
_{\theta^{\ast}} \left[  I^{ Y_{0} : Y^{\epsilon}_{0} }_{Y_{-\infty:-1};Y^{\epsilon}_{1: \infty}} ( \theta^{\ast} ) \right] \right\Vert  = O ( \epsilon^{2})$.
\end{enumerate}
\end{lemma}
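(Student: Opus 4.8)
The plan is to derive all four parts from a single representation of the per–step missing information as a conditional covariance. Fix $\epsilon>0$, write $\mathcal{F}=\sigma(Y_{-\infty:-1};Y^{\epsilon}_{1:\infty})$ for the conditioning $\sigma$–field, and set $s_{0}=\nabla_{\theta}\log p_{\theta^{\ast}}(Y_{0}\vert\mathcal{F})$ and $\tilde{s}_{0}=\nabla_{\theta}\log p_{\theta^{\ast}}(Y^{\epsilon}_{0}\vert\mathcal{F})$ for the conditional scores of the clean and of the perturbed central observation. Since $Y^{\epsilon}_{0}=Y_{0}+\epsilon Z_{0}$ with $Z_{0}\sim\mathcal{U}_{B^{1}_{0}}$ independent of everything else, the noise kernel is $\theta$–free, so $p_{\theta^{\ast}}(y^{\epsilon}_{0}\vert\mathcal{F})=\int p_{\theta^{\ast}}(y_{0}\vert\mathcal{F})\rho_{\epsilon}(y^{\epsilon}_{0}-y_{0})\,dy_{0}$, where $\rho_{\epsilon}$ is the density of $\epsilon Z_{0}$; differentiating under the integral (justified by (A5)) gives the coarsening identity $\tilde{s}_{0}=\bar{\mathbb{E}}_{\theta^{\ast}}[s_{0}\vert Y^{\epsilon}_{0},\mathcal{F}]$. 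Because $\bar{\mathbb{E}}_{\theta^{\ast}}[s_{0}\vert\mathcal{F}]=0$ and $\tilde{s}_{0}$ is $\sigma(Y^{\epsilon}_{0},\mathcal{F})$–measurable, expanding and using the tower property collapses the cross terms to yield
\[
I^{Y_{0}:Y^{\epsilon}_{0}}_{Y_{-\infty:-1};Y^{\epsilon}_{1:\infty}}(\theta^{\ast})=\bar{\mathbb{E}}_{\theta^{\ast}}\big[(s_{0}-\tilde{s}_{0})(s_{0}-\tilde{s}_{0})^{T}\,\big\vert\,\mathcal{F}\big].
\]
This is a conditional covariance matrix, hence positive semi-definite, and remains so after taking $\bar{\mathbb{E}}_{\theta^{\ast}}$; this proves the first assertion of part 1.

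For the strictness in part 1, note that $\bar{\mathbb{E}}_{\theta^{\ast}}[I^{Y_{0}:Y^{\epsilon}_{0}}_{Y_{-\infty:-1};Y^{\epsilon}_{1:\infty}}(\theta^{\ast})]=0$ forces $s_{0}=\tilde{s}_{0}$ a.s., i.e.\ $y_{0}\mapsto s_{0}(y_{0})$ is $\sigma(Y^{\epsilon}_{0})$–measurable given $\mathcal{F}$. Writing $s_{0}(Y_{0})=h(Y_{0}+\epsilon Z_{0})$ and varying $Z_{0}$ over $B^{1}_{0}$ with $Y_{0}$ held fixed shows $h$ must be constant on each ball $B^{\epsilon}_{y_{0}}$; by (A3) the density $p_{\theta^{\ast}}(\cdot\vert\mathcal{F})$ is strictly positive on the support of $\nu$, so if $\nu$ is connected these overlapping balls chain together and force $s_{0}(\cdot)$ to be globally constant, whence, being conditionally centred, $s_{0}\equiv0$. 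The remaining step, which I expect to be the \emph{main obstacle}, is to deduce $I(\theta^{\ast})=0$ (contradicting the hypothesis) from the vanishing of these two–sided conditional scores: for this one uses that, by stationarity, $I(\theta^{\ast})$ is the variance of the one–sided limiting score $\nabla_{\theta}\log p_{\theta^{\ast}}(Y_{0}\vert Y_{-\infty:-1})$, together with the filter/smoother stability supplied by (A3), to relate the one– and two–sided conditional scores and conclude that $s_{0}\equiv0$ propagates to $I(\theta^{\ast})=0$.

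Parts 2 and 3 are limit statements obtained from $\bar{\mathbb{E}}_{\theta^{\ast}}[I^{Y_{0}:Y^{\epsilon}_{0}}_{Y_{-\infty:-1};Y^{\epsilon}_{1:\infty}}]=\bar{\mathbb{E}}_{\theta^{\ast}}[I_{Y_{0}\vert\mathcal{F}}]-\bar{\mathbb{E}}_{\theta^{\ast}}[I_{Y^{\epsilon}_{0}\vert\mathcal{F}}]$ together with dominated convergence, the uniform bounds of (A3) and (A5) supplying the dominating functions. For part 2, as $\epsilon\to\infty$ the perturbed observation becomes $\theta$–uninformative, since $g^{\epsilon}_{\theta}(y\vert x)=\nu(B^{\epsilon}_{y})^{-1}\int_{B^{\epsilon}_{y}}g_{\theta}(y'\vert x)\nu(dy')\to\nu(\mathcal{Y})^{-1}$, a $\theta$–free constant (using that $\nu$ is finite), so $\tilde{s}_{0}\to0$ and $\bar{\mathbb{E}}_{\theta^{\ast}}[I_{Y^{\epsilon}_{0}\vert\mathcal{F}}]\to0$; simultaneously the perturbed future conditioning washes out, leaving $\bar{\mathbb{E}}_{\theta^{\ast}}[I_{Y_{0}\vert\mathcal{F}}]\to\bar{\mathbb{E}}_{\theta^{\ast}}[I_{Y_{0}\vert Y_{-\infty:-1}}]=I(\theta^{\ast})$ by the tower property and stationarity. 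For part 3, as $\epsilon\to0$ we have $Y^{\epsilon}_{0}\to Y_{0}$ and $Y^{\epsilon}_{1:\infty}\to Y_{1:\infty}$, so $\tilde{s}_{0}=\bar{\mathbb{E}}_{\theta^{\ast}}[s_{0}\vert Y^{\epsilon}_{0},\mathcal{F}]\to s_{0}$ in $L_{2}$ and the covariance representation gives convergence to $0$; this is the same continuity in $\epsilon$ already exploited in Lemmas \ref{lemexpllhcont} and \ref{lemABCGradApprox}.

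Finally, part 4 is the quantitative sharpening of part 3. Because $|Y_{0}-Y^{\epsilon}_{0}|=\epsilon|Z_{0}|\leq\epsilon$, the conditional law of $Y_{0}$ given $(Y^{\epsilon}_{0},\mathcal{F})$ is supported in $B^{\epsilon}_{Y^{\epsilon}_{0}}$, so $s_{0}-\tilde{s}_{0}=s_{0}(Y_{0})-\bar{\mathbb{E}}_{\theta^{\ast}}[s_{0}(Y_{0})\vert Y^{\epsilon}_{0},\mathcal{F}]$ is the fluctuation of $s_{0}(\cdot)$ over an $\epsilon$–ball. Writing $s_{0}(y)=N(y)/D(y)$ with $D(y)=\int g_{\theta^{\ast}}(y\vert x_{0})\phi(x_{0})\mu(dx_{0})$ and $N(y)=\int\nabla_{\theta}g_{\theta^{\ast}}(y\vert x_{0})\phi(x_{0})\mu(dx_{0})$, where $\phi$ is the $y$–free smoothing density of $X_{0}$ given $\mathcal{F}$, the bounds (A3) and (A5) control $D$ from below and $N$ from above, while (A6) and (A7) bound $|D(y)-D(y')|$ and $|N(y)-N(y')|$ by $\mathrm{const}\cdot|y-y'|$; hence $y\mapsto s_{0}(y)$ is Lipschitz with a constant $K$ independent of $\mathcal{F}$. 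Therefore $|s_{0}-\tilde{s}_{0}|\leq 2K\epsilon$ pointwise, and the covariance representation yields $\|\bar{\mathbb{E}}_{\theta^{\ast}}[I^{Y_{0}:Y^{\epsilon}_{0}}_{Y_{-\infty:-1};Y^{\epsilon}_{1:\infty}}(\theta^{\ast})]\|\leq C\epsilon^{2}$, which is the claim.
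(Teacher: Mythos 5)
The decisive problem is in the second assertion of part 1, exactly at the step you flag as ``the main obstacle'': you never close it, and it cannot be closed with the tools you have set up. From $\bar{\mathbb{E}}_{\theta^{\ast}}[I^{Y_{0}:Y_{0}^{\epsilon}}_{Y_{-\infty:-1};Y^{\epsilon}_{1:\infty}}(\theta^{\ast})]=0$ your covariance representation and ball-chaining argument (this is in substance the paper's Lemma \ref{lemnoiseinfloss}) give $\nabla_{\theta}\log p_{\theta^{\ast}}(Y_{0}|Y_{-\infty:-1};Y^{\epsilon}_{1:\infty})=0$ a.s. But ``filter/smoother stability'' cannot convert this into $I(\theta^{\ast})=0$: the conditioning block $Y^{\epsilon}_{1:\infty}$ sits at lag one from $Y_{0}$, so its effect on the conditional law of $Y_{0}$ is $O(1)$ and there is no gap parameter that would make the one-sided and two-sided scores close --- they are genuinely different random variables. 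Concretely, writing $p_{\theta}(y_{0}|\mathrm{past})=\int p_{\theta}(y_{0}|\mathrm{past},f)\,p_{\theta}(f|\mathrm{past})\,df$ with $f$ the noisy future, differentiating produces the extra term $\bar{\mathbb{E}}_{\theta^{\ast}}\left[p_{\theta^{\ast}}(y_{0}|\mathrm{past},F)\nabla_{\theta}\log p_{\theta^{\ast}}(F|\mathrm{past})\,\middle|\,\mathrm{past}\right]$, which does not vanish even when the conditional score of $Y_{0}$ does. The paper's missing idea is Remark \ref{corasymmissinf}: the asymptotic missing-information identity holds for \emph{every} block length $m$, with central block $Y_{0:m-1}$ versus $Y^{\epsilon}_{0:m-1}$ conditioned on $Y_{-\infty:-1};Y^{\epsilon}_{m:\infty}$, and all of these expected missing informations equal the same quantity $I(\theta^{\ast})-I^{\epsilon}(\theta^{\ast})$. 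Hence vanishing at $m=1$ forces vanishing at every $m$; the connectedness argument (Lemmas \ref{remmeasureconnectedness} and \ref{lemmeasureconnectedness} plus Lemma \ref{lemnoiseinfloss}) and Fisher's identity then give $\nabla_{\theta}\log p_{\theta^{\ast}}(Y_{0}|Y_{-\infty:-1};Y^{\epsilon}_{m:\infty})=0$ for every $m$; and only now, with the noisy conditioning at distance $m$, does geometric forgetting (the bound \eqref{remcorGradDiffRateextensionEq}, $\leq C\rho^{m/2}$) apply, letting $m\to\infty$ to conclude $\nabla_{\theta}\log p_{\theta^{\ast}}(Y_{0}|Y_{-\infty:-1})=0$ a.s., i.e.\ $I(\theta^{\ast})=0$. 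Without this family of lag-$m$ identities your argument for strictness cannot be completed.

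The rest of your proposal is essentially the paper's own route: the conditional covariance representation of the missing information (established inside the proof of Lemma \ref{asymmissinf} via the Fisher identity) gives positive semi-definiteness; part 2 uses that the perturbed observation becomes uninformative (Lemma \ref{lemlargenoiseinfasym}) while the noisy-future conditioning washes out (Lemma \ref{lemcondprobsatbasepsilongotozero}); part 3 is the Lebesgue-differentiation/dominated-convergence continuity argument; part 4 is the Lipschitz bound from (A6)--(A7). One slip in part 4: the conditional score is not $N(y)/D(y)$ with $N(y)=\int\nabla_{\theta}g_{\theta^{\ast}}(y|x_{0})\phi(x_{0})\mu(dx_{0})$; the numerator also contains $\int g_{\theta^{\ast}}(y|x_{0})\,\bar{\nabla}_{\theta}p_{\theta^{\ast}}(x_{0}|\mathcal{F})\,\mu(dx_{0})$, coming from the $\theta$-dependence of the smoothing density (cf.\ \eqref{lemasymmissinfeq5001}). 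That term is still Lipschitz in $y$ by (A6) together with the uniform bound \eqref{lemGradFiltStabeq4} on the filter gradient, so your conclusion survives, but the formula as written is incorrect.
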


The proof of Lemma \ref{lemABCMLEcomp} is again deferred to Appendix B.

\begin{rem} \label{remsufficentstatsnoisyABCMLE}
Comments similar to those in Remark \ref{remsufficentstatstandardABCMLE} concerning summary statistics also hold for the results on the noisy ABC MLE given in this section.  In particular we note that given a summary statistic of the form $S ( \hat{Y}_{1} ) , \ldots , S (\hat{Y}_{n} )$ one can derive a result analogous to Theorem \ref{asymnormthm} in which the Fisher information matrices $I ( \theta^{\ast} )$ and $I^{\epsilon} ( \theta^{\ast} )$ are replaced with the Fisher information matrices for the HMMs $S(Y_{1}) , \ldots$ and $S(Y_{1}) + \epsilon Z_{1}, \ldots$ where $S(Y_{1}) + \epsilon Z_{1}, \ldots$ is a perturbed version of $S(Y_{1}) , \ldots$ defined in an analogous manner to \eqref{alg1pertHMM}.
\end{rem}

\section{Smoothed ABC} \label{secExtensionToKernels}

ABC estimators based on Procedures \ref{algABCMLE} and \ref{algNoisyABCMLE}
have an inherent lack of smoothness due to the fact that the estimator effectively gives weight one to points inside the balls $B_{\hat{Y}_{1}}^{\epsilon} , \ldots , B_{\hat{Y}_{n}}^{\epsilon}$ and weight zero to those outside them.
As seen in the next section, this becomes particularly problematic if one then tries to estimate these probabilities using SMC algorithms
as the algorithm can collapse due to the use of indicator functions; see \cite{deldoujasabc2008}
for some discussion.

A common way of smoothing ABC, see for example \cite{beaumont2002}, is to approximate the likelihoods of a sequence of observations $\hat{Y}_{1} , \ldots , \hat{Y}_{n}$ not with \eqref{eq:approxnewver1} but instead with the smoothed approximations
\begin{align}
\lefteqn{ \mathbb{E}_{\theta} \left[ \phi \bigg( \frac{ \hat{Y}_{1} - Y_{1} }{\epsilon} \bigg) \cdots \phi \bigg( \frac{ \hat{Y}_{n} - Y_{n} }{\epsilon} \bigg) \right] } \notag \\
 &= \int_{\mathcal{X}^{n+1}\times\mathcal{Y}^{n}} \bigg[\prod_{k=1}^{n}
q_{\theta} (x_{k-1}, x_{k}) \phi  ( \frac{ \hat{Y}_{k} - y_{k} }{\epsilon} )
g_{\theta} (y_{k} \vert x_{k}) \bigg] \pi_{0} (d x_{0}) \, \mu( d
x_{1:n} ) \nu( d y_{1:n} ) \notag \\
\label{KernelApproxeq1}
\end{align}
where $\phi( \cdot )$ is the density w.r.t. Lebesgue measure of some smooth probability distribution $\Phi$.  One then estimates the parameters
via maximising \eqref{KernelApproxeq1}. 

By using exactly the same arguments as in Section \ref{ABCProcedureApproxDiscussionSubSection} it is clear that the smoothed ABC MLE estimator resulting from approximating the likelihoods of a sequence of observations $\hat{Y}_{1} , \ldots , \hat{Y}_{n}$ with \eqref{KernelApproxeq1} for some suitable kernel $\phi$ is statistically equivalent to estimator obtained by by approximating the true likelihoods with the likelihoods of the perturbed HMM defined to be
\begin{equation}
\left\{  X_{k},Y_{k}^{\Phi,\epsilon
}\right\}  _{k\geq0} := \{X_{k},Y_{k}+\epsilon Z_{k}\}_{k \geq 0}  \label{algSmoothed1pertHMM}%
\end{equation}
where the $\left\{  Z_{k}\right\}  _{k\geq0}$ are such that $Z_{k}\overset{\text{\emph{i.i.d.}}}{\sim} \Phi$.  Further, in an analogous manner to Section \ref{SubSectionNoisyABCMLEProcDef} one can define a smoothed noisy ABC MLE by applying the smoothed ABC MLE defined above to noisy data of the form $\hat{Y}_{1} + \epsilon \hat{Z}_{1} , \ldots, \hat{Y}_{n} + \epsilon \hat{Z}_{n}$ where again $Z_{k}\overset{\text{\emph{i.i.d.}}}{\sim} \Phi$.

It is natural to ask whether results analogous to Theorems \ref{thmmisspecconv}, \ref{theo:prob_conv}, \ref{theo:asympSolnBias}, \ref{thmNoisyABCAsyCon} and \ref{asymnormthm} hold for the smoothed ABC MLE and the smoothed noisy ABC MLE.  By a careful reading of the proofs of these theorems one can see that analogous results hold when the density of $\Phi$ satisfies the following conditions:
\begin{enumerate}[(i)]
\item $\phi(y) > 0$ for all $y \in \mathbb{R}^{m}$. \\
\item $\phi ( \cdot )$ is continuously differentiable. \\
\item for the reference measure $\nu$ and all $f \in L_{\infty}$,
\begin{equation*}
\lim_{\epsilon \to 0} \frac{\int f(y^{\prime}) \phi(\frac{y - y^{\prime}}{\epsilon}) \nu ( d y^{\prime} )}{ \int \phi(\frac{y - y^{\prime}}{\epsilon}) \nu ( d y^{\prime} )} = f(y) \quad \nu \text{ a.s.} .
\end{equation*} \\
\item $\int x^{2} \phi(x) dx < \infty$.
\end{enumerate}
We observe that these conditions hold for many commonly used smoothing distributions, in particular the Gaussian distribution.

Finally it is noted that comments analogous to those in Remarks \ref{remsufficentstatstandardABCMLE} and \ref{remsufficentstatsnoisyABCMLE} hold for the smoothed ABC MLE and smoothed noisy ABC MLE.  Moreover the quantities \eqref{KernelApproxeq1} can be straight-forwardly estimated using SMC techniques, see the following section for more details.

\section{Implementing ABC via SMC}

\label{sec:simos}

SMC algorithms are commonly used to approximate conditional laws of the form $p (
X_{k} \vert Y_{1:k} )$ (we drop the $\hat{Y}_k$ notation and omit dependence upon $\theta$ here). At each time $k$ the conditional
law of the hidden state is approximated by a collection of $N$ particles, $x_{k}^{1} , \ldots,
x_{k}^{N}$ as
\begin{equation}
\label{SMCdesceq1}\widehat{p}( \cdot \vert Y_{1:k} ) = \frac{1}{N}%
\sum_{l=1}^{N}\delta_{x_{k}^{l}}(\cdot).
\end{equation}
The crucial feature of the SMC algorithm with respect to any form of likelihood based parameter inference is that at each step,  $\frac{1}{N} \sum_{l=1}^{N} g ( Y_{k} \vert x^{l}_{k})$, is an approximation to the conditional
likelihood $p(Y_{k} \vert Y_{1:k-1})$. Thus when the conditional likelihoods $g ( \cdot \vert \cdot )$ are tractable SMC algorithms can
be used to generate approximations to the full likelihoods $p(Y_{1} , \ldots,
Y_{n})$, e.g. see \cite{anddoutad2009} for the use of SMC for MLE in this standard setting.

Consider now the ABC MLE and noisy ABC MLE procedures defined in Sections \ref{sec:standABC} and \ref{sec:noisyABC} and recall that we approximate the true likelihoods with the likelihoods of the perturbed HMMs \eqref{alg1pertHMM}.   To see how standard SMC methods can be implemented in the context of these estimators   consider the extended process $\left\{  X_{k}, Y_{k}, Y^{\epsilon}_{k}
\right\}  _{k \geq0}$ defined such that $\left\{  X_{k} , Y_{k} \right\}  _{k \geq0}$ are
the hidden state and observation process of the original HMM and for all $k
\geq0$, $Y^{\epsilon}_{k} = Y_{k} + \epsilon Z_{k}$ where $\left\{
Z_{k} \right\}  _{k \geq0}$ is an i.i.d.~ sequence of $\mathcal{U}_{B_{0}^{1}}$ random variables. Clearly the marginal
distributions of the observations of the extended process are equal to those of the
observations of the perturbed HMMs defined in
\eqref{alg1pertHMM}.   Thus in order to compute the ABC approximation to the likelihood of a sequence of observations $\hat{Y}_{1}
, \ldots, \hat{Y}_{n}$ it is sufficient to compute the likelihood of the observations under the extended HMM detailed above.  Since the conditional densities of the observed state given the hidden state of the extended HMM are trivial the corresponding likelihoods may be computed using standard SMC.  This suggests the following SMC algorithm for evaluating the ABC approximate likelihoods \eqref{eq:approxnewver1}, see \cite{jassinmarmcc2010}
\begin{algorithm}{SMC for Computation of Approximate Bayesian Likelihood $p^{\epsilon}(\hat{Y}_{1} , \ldots , \hat{Y}_{n})$.}
\newline
\newline
For $k=1, \ldots , n$ do
\begin{enumerate}[1.]
\item Generate proposal states $(\tilde{x}_{k}^{1},\tilde{y}_{k}^{1}), \ldots, (\tilde{x}_{k}^{N},\tilde{y}_{k}^{N})$
where each $\tilde{x}_{k}^{l} \sim q (x_{k-1}^{l}, \cdot)$ and each $\tilde{y}_{k}^{l} \sim g ( \cdot \vert \tilde{x}_{k}%
^{l} )$.
\item Weight each proposed state $(\tilde{x}_{k}^{l}, \tilde{y}_{k}^{l})$ with $\tilde{w}_{k}^{l} =
\mathbb{I}_{ B^{\epsilon}_{\hat{Y}_{k}} }(\tilde{y}_{k}^{l}) $.
\item Renormalise the weights; $\tilde{w}_{k}^{l} \mapsto w_{k}^{l} :=
\tilde{w}_{k}^{l} / \sum_{l=1}^{N} \tilde{w}_{k}^{l}$.
\item Generate the particles $x_{k}^{1}, \ldots, x_{k}^{N}$ by sampling
multinomially from the proposals $\tilde{x}_{k}^{1}, \ldots, \tilde{x}_{k}%
^{N}$ according to the weights $w_{k}^{1} , \ldots, w_{k}^{N}$.
\end{enumerate}
Finally approximate the likelihood $p^{\epsilon}(\hat{Y}_{1} , \ldots , \hat{Y}_{n})$ by $\prod_{k=1}^{n} \left( \frac{1}{N}\sum_{l=1}^{N} \tilde{w}_{k}^{l} \right)$.
\end{algorithm}

Similarly, given a distribution $\Phi$ with smooth density $\phi$ w.r.t. Lebesgue measure, one can define a SMC algorithm for computing the corresponding smoothed ABC approximations to the likelihoods in an analogous manner; the details follow from Algorithm 2.

Note that in general one does not have to resample the particles at every step
and more efficient approaches may be possible, see for example \cite{deldoujas2008} and the references therein. A detailed analysis of the SMC method,
including description of resampling and convergence results can be found in
\cite{doudefgor2001} and \cite{del2004}.

\section{Numerical Example}

\label{sec:numex}

It is common in economics to model the log returns of a sequence of price data using a HMM.  Typically one uses the hidden state to model certain underlying economic factors which cannot be directly observed and the observed state to model the log returns of the prices themselves.  Furthermore it has become increasingly common to model the distribution of the log returns of asset prices using $\alpha$-stable distributions due to their seemingly good fit to the actual data, see for example \cite{racmit2000}.  Unfortunately the likelihoods of $\alpha$-stable distributions are intractable and so using them presents difficulties when trying to infer model parameters from real financial data.

In this section we study the performance of both the standard and noisy ABC MLE procedures when used to estimate the scale parameter of the following toy economic model with intractable likelihoods.  The hidden state $\left\{ X \right\}_{k \geq 0}$ takes values in the set $\left\{ -1, 1 \right\}$ and the corresponding Markov chain  has transition matrix
\begin{equation*}
\left( 
\begin{array}{cc}
\frac{19}{20} & \frac{1}{20} \\
\frac{1}{5} & \frac{4}{5}
\end{array}
\right) .  
\end{equation*}
Conditional on the hidden state the observed state $Y_{k} \sim S_{\alpha} ( \sigma, 0, X_{k}+\delta)$ where $S_{\alpha} ( \sigma, \beta, \delta)$ denotes the $\alpha$-stable distributions with parameters $\alpha, \sigma, \beta$ and $\delta$, see for example \cite{samtaq1994}.  Intuitively the hidden state denotes the health of the underlying 
economy, $+1$ being good ie.~growth and $-1$ being bad ie.~recession.  Given the state of the economy the log returns of the relevant asset price are then $\alpha$-stable distributed with a positive or negative drift as appropriate.

\begin{figure}
\centering
\scalebox{0.35}[0.3]{\includegraphics[viewport=620 190 550 600]{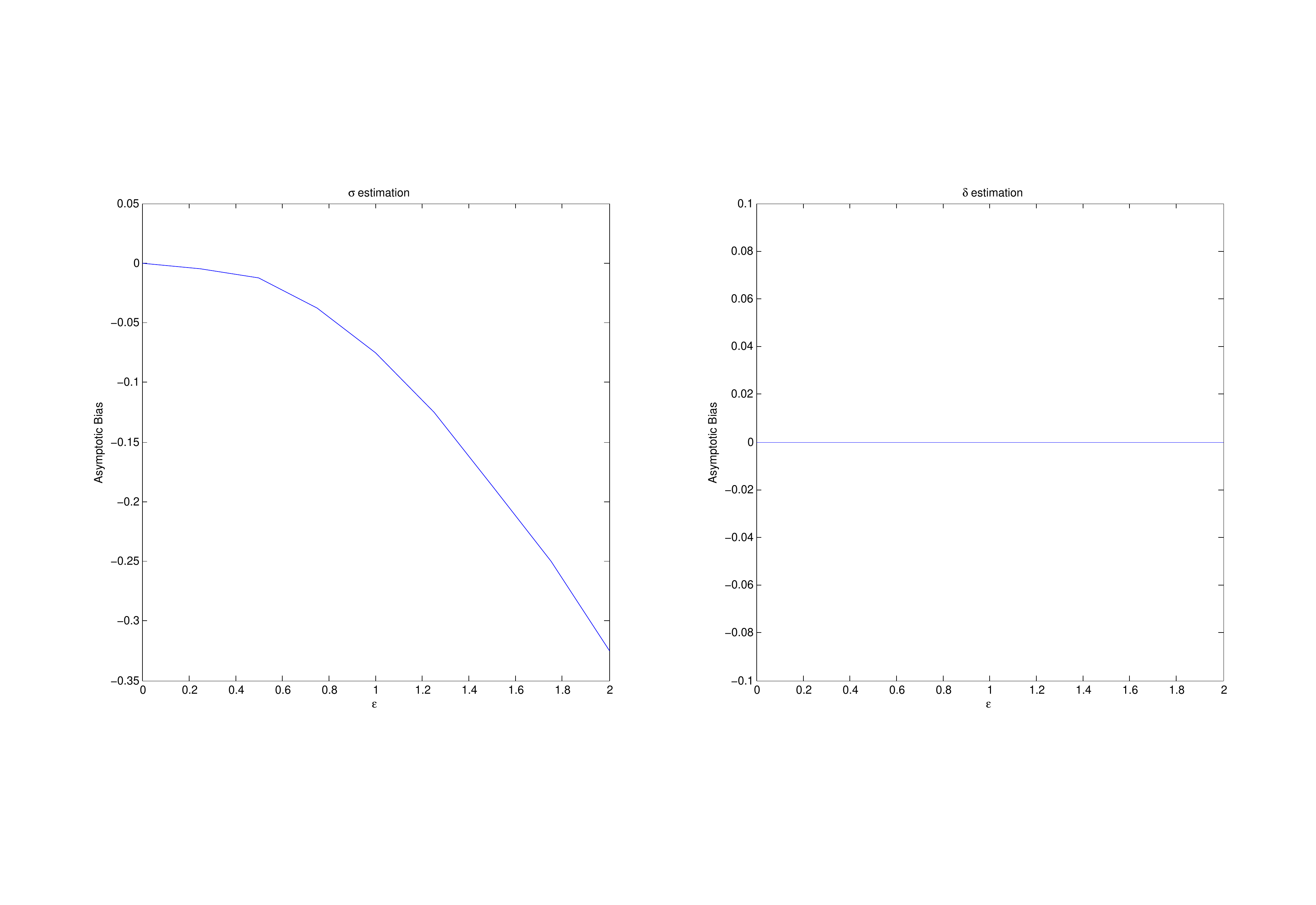}}
\caption{Asymptotic bias of ABC MLE parameter estimates.} \label{StandardABCFig}
\end{figure}

In Figure \ref{StandardABCFig} we plot the asymptotic bias of the ABC MLE when used to estimate the parameters $\sigma$ and $\delta$ given that the true model parameters are $\alpha = 1.8$, $\sigma = 1$ and $\delta = 0$.  We note that the ABC MLE seems to induce a bias in the estimates of the scale parameters but not of the location parameters.  Intuitively this can be understood as being due to the fact that the observed states of the perturbed HMMs \eqref{alg1pertHMM} have a greater variance than those of the corresponding original HMMs but the same mean position.  Lastly we note that for very small $\epsilon$ the size of the bias seems to be $O ( \epsilon^{2} )$ ie. one order of magnitude less than the upper bound obtained in Theorem \ref{theo:asympSolnBias}.

Finally we investigate the behaviour of the noisy ABC MLE.  In the first graph in Figure \ref{NoisyABCFig} we plot the Fisher information matrix as a function of $\epsilon$.  The data suggests that for small $\epsilon$ the loss of information is $O ( \epsilon^{2} )$.  In the second graph we plot the log of the inverse of the Fisher information as a function of log $\epsilon$.  In this case the resulting data suggests that the Fisher information in the noisy ABC MLE decays as the inverse of the fourth power of $\epsilon$  for sufficiently large values of $\epsilon$.

\begin{figure}[h]
\centering
\scalebox{0.35}[0.3]{\includegraphics[viewport=620 190 550 600]{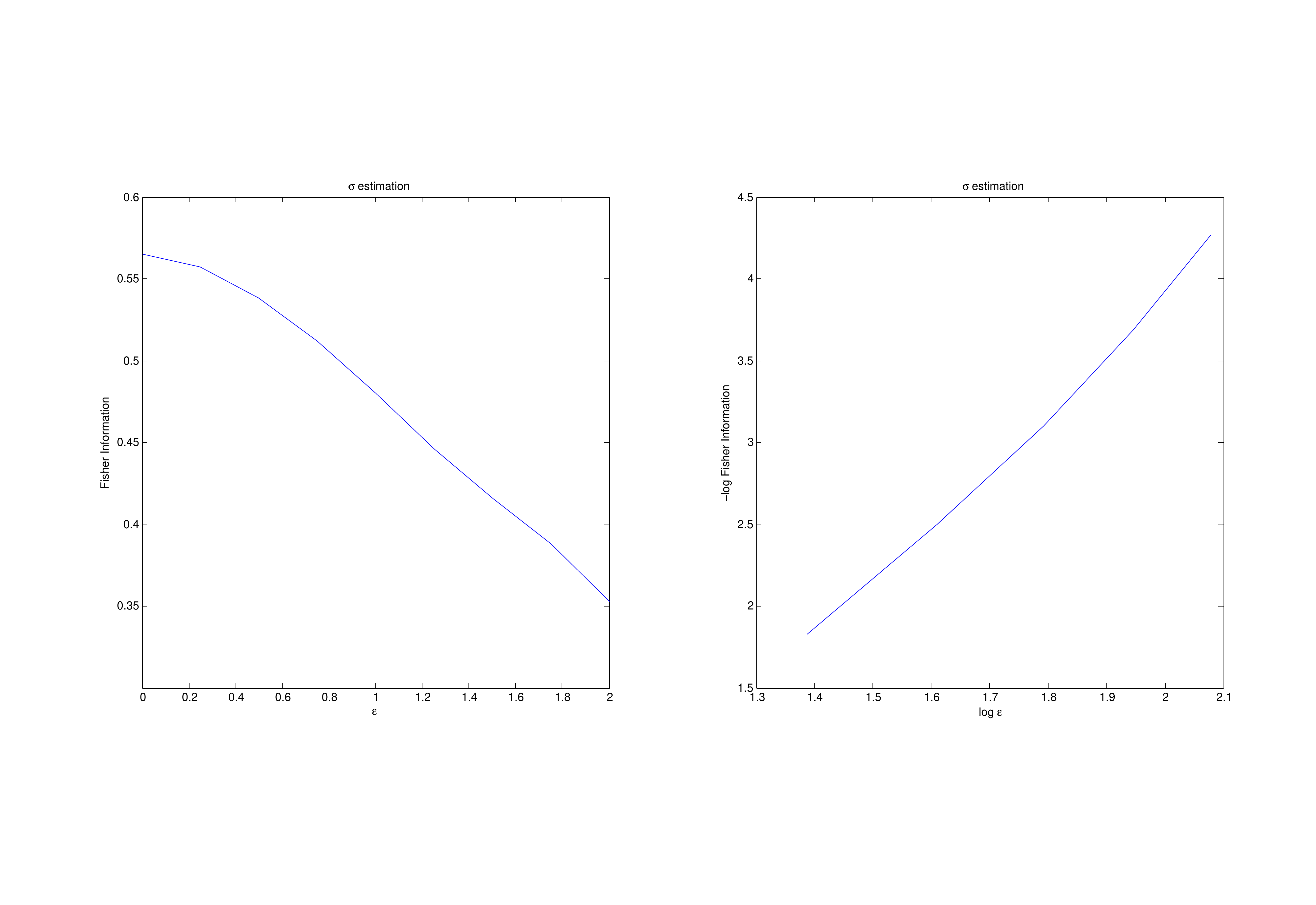}}
\caption{Fisher Information of noisy ABC MLE parameter estimates.} \label{NoisyABCFig}
\end{figure}

This second plot indicates that the Fisher information in the noisy ABC MLE decays as the fourth power of $\epsilon$, at least for large values of $\epsilon$.  This suggests that in order for ABC MLE to provide accurate parameter estimates one must use relatively small values of $\epsilon$.  However this conflicts with the need to keep $\epsilon$ reasonably large in order to achieve computational stability.  we note that even in this simple 1-D linear model we had to use large numbers of particles in our SMC algorithms to obtain accurate estimates of the ABC likelihoods for small values of $\epsilon$.  In higher dimensions this problem will be even worse as the volumes of the $\epsilon$ balls around the observations will decay even more quickly with $\epsilon$ than in the one dimensional case.  This suggests that in order for ABC to become a truly practical statistical method one needs to find algorithms that can generate samples from arbitrarily small neighborhoods of a point in an efficient manner.  One way in which this may be done is to marry ABC with techniques from the rapidly growing field of rare event simulation (see \cite{rubtuf2009} for a recent overview of this area).

\section{Summary} \label{sec:summary}

In this article we have investigated the behaviour of the ABC and noisy ABC MLEs when used for estimating the parameters of HMMs.  We have shown that mathematically these estimators should both be understood as being MLEs implemented using the likelihoods of a collection of perturbed HMMs. Using this insight we have shown that the standard ABC MLE has an innate asymptotic bias which can be made arbitrarily small by choosing a sufficiently small value of the parameter $\epsilon$.  Further we have shown that the noisy ABC MLE provides an asymptotically consistent estimator which is also, under certain conditions analogous to those for the MLE, asymptotically normal.  Moreover this noisy version of the estimator has a loss of information relative to the MLE which manifests itself via an increase in the variance of the parameter estimates.  Finally we have shown that under very mild conditions these results can be extended to smoothed versions of the standard and noisy ABC MLEs.

These theoretical results help to solidify and extend existing intuition associated to the approximations that have been considered.  Further they suggest some possible avenues for future investigation.  Firstly one would expect that the theoretical results in this paper will hold under much weaker assumptions than those presented here.  The question of finding the necessary mathematical tools to relax these assumptions remains an interesting and important open problem.  Secondly, the numerical results suggest that in order to provide an efficient and accurate method of parameter estimation ABC MLE will in practice need to be combined with computational techniques that allow one to generate samples effectively from sets with very small probabilities.  The question of finding a generally applicable method of doing this is the topic of our current research.

\section*{Appendix A: Auxiliary Results}

Here we present some supporting technical lemmas. The first result is a
standard result from real analysis which we state without proof.

\begin{lemma}
\label{lemrealanalresstand} Suppose that there exists a function
$f:\mathbb{R}^{u} \to\mathbb{R}^{v}$ and sequence of continuously
differentiable functions $f_{n}:\mathbb{R}^{u} \to\mathbb{R}^{v}$, $n \geq1$,
such that $f_{n}(z), \nabla_{\theta} f_{n}(z)$ are bounded uniformly in $n$ and $z$,
$f_{n}(z) \to f(z)$ uniformly in $z$ and the sequence $\nabla_{\theta} f_{n}(z)$ is
Cauchy uniformly in z. Then $f$ is itself uniformly bounded and continuously
differentiable and $\nabla_{\theta} f (z) = \lim_{n \to\infty} \nabla_{\theta} f_{n} (z)$
uniformly in $z$.
\end{lemma}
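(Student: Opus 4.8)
The plan is to first identify the uniform limit of the derivatives and then recover differentiability of $f$ through an integral (fundamental theorem of calculus) representation. Since $\nabla_{\theta} f_{n}(z)$ is Cauchy uniformly in $z$ and the space of bounded $\mathbb{R}^{v \times u}$-valued functions equipped with the supremum norm is complete, there exists a function $g$ with $\nabla_{\theta} f_{n} \to g$ uniformly in $z$. As a uniform limit of the continuous, uniformly bounded Jacobians $\nabla_{\theta} f_{n}$, the limit $g$ is itself continuous and uniformly bounded. Uniform boundedness of $f$ is immediate from $f_{n} \to f$ uniformly together with the uniform bound on the $f_{n}$. It thus remains only to show that $f$ is differentiable with $\nabla_{\theta} f = g$.

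To this end I would fix $z, h \in \mathbb{R}^{u}$ and apply the fundamental theorem of calculus to each $C^{1}$ map $f_{n}$ along the segment joining $z$ to $z+h$, giving
\[
f_{n}(z+h) - f_{n}(z) = \int_{0}^{1} \nabla_{\theta} f_{n}(z+th)\, h \, dt .
\]
Letting $n \to \infty$, the left-hand side converges to $f(z+h) - f(z)$ by convergence of $f_{n} \to f$, while on the right-hand side the uniform convergence $\nabla_{\theta} f_{n} \to g$ permits interchanging the limit with the integral, yielding
\[
f(z+h) - f(z) = \int_{0}^{1} g(z+th)\, h \, dt .
\]
With this representation in hand I would estimate the difference quotient: subtracting $g(z)\,h$ and dividing by $|h|$ gives $|h|^{-1} \int_{0}^{1} \bigl( g(z+th) - g(z) \bigr) h \, dt$, whose Euclidean norm is bounded by $\sup_{t \in [0,1]} \left\Vert g(z+th) - g(z) \right\Vert$. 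By continuity of $g$ this supremum tends to $0$ as $h \to 0$, so $f$ is differentiable at $z$ with derivative $g(z)$. Since $z$ was arbitrary and $g$ is continuous, $f$ is continuously differentiable with $\nabla_{\theta} f = g = \lim_{n} \nabla_{\theta} f_{n}$, the convergence being uniform by construction.

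The only genuinely delicate point is the passage to the limit inside the integral, and this is precisely where the hypothesis that $\nabla_{\theta} f_{n}$ converges \emph{uniformly} rather than merely pointwise is essential: it bounds $\left\vert \int_{0}^{1} \bigl( \nabla_{\theta} f_{n}(z+th) - g(z+th) \bigr) h \, dt \right\vert$ by $|h| \sup_{z} \left\Vert \nabla_{\theta} f_{n}(z) - g(z) \right\Vert$, which vanishes uniformly in $z$. Everything else — completeness producing the limit $g$, the inheritance of continuity and boundedness by uniform limits, and the final difference-quotient estimate drawn from the (uniform) continuity of $g$ on the compact segment — is routine, which is consistent with the lemma being a standard fact of real analysis.
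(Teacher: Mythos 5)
Your proof is correct. There is, however, nothing in the paper to compare it against: the paper states this lemma \emph{without} proof, explicitly describing it as ``a standard result from real analysis.'' Your argument is the canonical one for this classical term-by-term differentiation theorem, in its multivariate form: completeness of the space of bounded functions under the supremum norm yields the uniform limit $g$ of the Jacobians; the fundamental theorem of calculus applied to each $f_{n}$ along the segment from $z$ to $z+h$, combined with the interchange of limit and integral justified by uniform convergence, gives the integral representation $f(z+h)-f(z)=\int_{0}^{1} g(z+th)\,h\,dt$; and the difference-quotient estimate via continuity of $g$ then identifies $\nabla_{\theta} f = g$, with uniform convergence of $\nabla_{\theta} f_{n}$ to $\nabla_{\theta} f$ holding by construction. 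Each step, including the uniform boundedness of $f$ inherited from the $f_{n}$, is sound.
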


The second lemma is concerned with the identifiability of probability
distributions under additive noise.

\begin{lemma}
\label{pertdistdifferability} Let distributions $\mu_{1}, \mu_{2}$ and $\nu$
on $\mathbb{R}^{m}$ for some $m\geq1$ be given and suppose that the characteristic function of $\nu$ is equal to zero on a set of Lebesgue measure zero. Then%

\[
\mu_{1} = \mu_{2} \iff\mu_{1} \ast\nu= \mu_{2} \ast\nu.
\]

\end{lemma}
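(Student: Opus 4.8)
The plan is to use the characteristic function (Fourier transform) as the central tool, since convolution of measures corresponds to multiplication of their characteristic functions. The forward implication is trivial: if $\mu_1 = \mu_2$ then obviously $\mu_1 \ast \nu = \mu_2 \ast \nu$, so I need only address the converse. First I would recall that a probability measure on $\mathbb{R}^m$ is uniquely determined by its characteristic function; this is the standard Fourier inversion / uniqueness theorem for finite measures, which I would invoke without proof.

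For the nontrivial direction, suppose $\mu_1 \ast \nu = \mu_2 \ast \nu$. Denote by $\widehat{\mu_1}, \widehat{\mu_2}, \widehat{\nu}$ the respective characteristic functions. The convolution theorem gives
\begin{equation*}
\widehat{\mu_1}(t)\,\widehat{\nu}(t) = \widehat{\mu_2}(t)\,\widehat{\nu}(t) \qquad \text{for all } t \in \mathbb{R}^m,
\end{equation*}
so that $\bigl(\widehat{\mu_1}(t) - \widehat{\mu_2}(t)\bigr)\widehat{\nu}(t) = 0$ for every $t$. Consequently $\widehat{\mu_1}(t) = \widehat{\mu_2}(t)$ at every $t$ for which $\widehat{\nu}(t) \neq 0$. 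By hypothesis the set $Z = \{ t : \widehat{\nu}(t) = 0 \}$ has Lebesgue measure zero, so $\widehat{\mu_1}$ and $\widehat{\mu_2}$ agree on the complement $\mathbb{R}^m \setminus Z$, which is a set of full Lebesgue measure (in particular dense in $\mathbb{R}^m$).

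It remains to upgrade agreement on a dense, full-measure set to agreement everywhere, and this is the one step requiring a little care — I expect it to be the main (though modest) obstacle. Here I would use that characteristic functions are continuous: $\widehat{\mu_1}$ and $\widehat{\mu_2}$ are both uniformly continuous on $\mathbb{R}^m$ (being Fourier transforms of probability measures). Two continuous functions that agree on a dense set agree everywhere, so $\widehat{\mu_1}(t) = \widehat{\mu_2}(t)$ for all $t \in \mathbb{R}^m$. By the uniqueness theorem for characteristic functions this forces $\mu_1 = \mu_2$, completing the proof. The hypothesis that $\widehat{\nu}$ vanishes only on a Lebesgue-null set is exactly what guarantees that the set where the two characteristic functions are forced to coincide is dense, which is all that the continuity argument needs.
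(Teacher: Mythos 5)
Your proof is correct and takes essentially the same route as the paper's: characteristic functions, the convolution theorem, and the uniqueness theorem for Fourier transforms of probability measures. You are in fact more careful than the paper, whose proof is written as a bare chain of equivalences that never explicitly invokes the hypothesis on the zero set of $\varphi_{\nu}$; your density-plus-uniform-continuity argument is exactly the justification that the backward direction of the paper's middle ``iff'' silently requires.
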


\begin{proof}
For any distribution $\mu$ we shall let $\varphi_{\mu} ( \lambda)$ denote the
corresponding characteristic function. It is well known that for any pair of
random variables $\mu$ and $\nu$, $\varphi_{\mu\ast\nu} ( \lambda) = \varphi_{\mu} (
\lambda) \varphi_{\nu} ( \lambda)$ and that $\mu= \nu$ if and only if $\varphi_{\mu}
( \lambda) = \varphi_{\nu} ( \lambda)$ for all $\lambda$. Thus we have that
\begin{align*}
\mu_{1} = \mu_{2}  &  \iff\varphi_{\mu_{1}} ( \lambda) = \varphi_{\mu_{2}} (
\lambda) \text{ for all } \lambda\\
&  \iff\varphi_{\mu_{1}} ( \lambda) \varphi_{\nu} ( \lambda) = \varphi_{\mu_{2}} (
\lambda) \varphi_{\nu} ( \lambda) \text{ for all } \lambda\\
&  \iff\mu_{1} \ast\nu= \mu_{2} \ast\nu.
\end{align*}

\end{proof}

The following three Lemmas are well known results concerned with the connectedness of the support of a measure.  We state them without proof.

\begin{lemma}
\label{lemnullprobprob} Let a probability distribution $\mu$ on $\mathbb{R}%
^{m}$ for some $m\geq1$ be given. Then for all $\epsilon> 0$ the set
\[
F_{\mu,\epsilon} := \left\{  y \in\mathbb{R}^{m} : P_{\mu} \left(  Y \in
B^{\epsilon}_{y} \right)  = 0 \right\}
\]
is measurable and
\[
P_{\mu} \left(  F_{\mu,\epsilon} \right)  = 0 .
\]

\end{lemma}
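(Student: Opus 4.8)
The plan is to work throughout with the function $f(y) := P_{\mu}\left( Y \in B^{\epsilon}_{y} \right) = \mu \left( B^{\epsilon}_{y} \right)$, so that $F_{\mu,\epsilon} = \{ y : f(y) = 0 \} = f^{-1}(\{0\})$, and to dispatch the two assertions separately: first that $f$ is measurable (which yields measurability of $F_{\mu,\epsilon}$), and then that $\mu(F_{\mu,\epsilon}) = 0$.

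For measurability I would first show that $f$ is upper semicontinuous. Given any sequence $y_{n} \to y$, the fact that the balls $B^{\epsilon}_{y}$ are \emph{closed} gives the pointwise bound $\limsup_{n} \mathbb{I}_{B^{\epsilon}_{y_{n}}}(x) \leq \mathbb{I}_{B^{\epsilon}_{y}}(x)$ for every $x$: indeed if $|x - y| > \epsilon$ then the left-hand side is $0$, and otherwise the right-hand side is $1$. Since $\mu$ is finite and the integrands are bounded by $1$, the reverse Fatou lemma gives $\limsup_{n} f(y_{n}) = \limsup_{n} \int \mathbb{I}_{B^{\epsilon}_{y_{n}}}\, d\mu \leq \int \mathbb{I}_{B^{\epsilon}_{y}}\, d\mu = f(y)$, which is precisely upper semicontinuity. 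Consequently each strict sublevel set $\{ f < c \}$ is open, so $\{ f = 0 \} = \bigcap_{n \geq 1} \{ f < 1/n \}$ is a countable intersection of open sets, hence Borel; this establishes that $F_{\mu,\epsilon}$ is measurable.

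For the null property I would exploit the separability of $\mathbb{R}^{m}$ through a covering argument. Fix a countable dense set $D \subset \mathbb{R}^{m}$ and set $\mathcal{D}' := \{ d \in D : \mu( B^{\epsilon/2}_{d} ) = 0 \}$, a countable collection. The key containment is $F_{\mu,\epsilon} \subseteq \bigcup_{d \in \mathcal{D}'} B^{\epsilon/2}_{d}$. To see it, take $y \in F_{\mu,\epsilon}$; by density there is $d \in D$ with $|y - d| < \epsilon/2$, and the triangle inequality yields $B^{\epsilon/2}_{d} \subseteq B^{\epsilon}_{y}$, so that $\mu( B^{\epsilon/2}_{d} ) \leq \mu( B^{\epsilon}_{y} ) = 0$, whence $d \in \mathcal{D}'$, while plainly $y \in B^{\epsilon/2}_{d}$. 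Countable subadditivity then gives $\mu(F_{\mu,\epsilon}) \leq \sum_{d \in \mathcal{D}'} \mu( B^{\epsilon/2}_{d} ) = 0$, as required.

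I expect the only genuinely delicate point to be the reverse-Fatou step underlying upper semicontinuity: one must check that the pointwise inequality $\limsup_{n} \mathbb{I}_{B^{\epsilon}_{y_{n}}} \leq \mathbb{I}_{B^{\epsilon}_{y}}$ really holds (it does here precisely because the balls are closed, so boundary points cause no difficulty) and that finiteness of $\mu$ legitimizes the use of reverse Fatou. The covering argument is then essentially routine, the one conceptual ingredient being the passage from the \emph{uncountable} family $\{ B^{\epsilon}_{y} : y \in F_{\mu,\epsilon} \}$ to the \emph{countable} subfamily indexed by $D$, which is exactly where separability is used.
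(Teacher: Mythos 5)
Your proof is correct and complete. There is, however, no in-paper argument to compare it against: the authors list Lemma \ref{lemnullprobprob} among the results stated explicitly without proof (it is grouped with two connectedness lemmas under ``well known results \ldots\ We state them without proof''), so your write-up supplies a proof where the paper gives none. Both halves of your argument are sound and are the standard ones for this fact. For measurability, the pointwise bound $\limsup_{n}\mathbb{I}_{B^{\epsilon}_{y_{n}}}(x)\leq\mathbb{I}_{B^{\epsilon}_{y}}(x)$ does hold precisely because the balls are closed (if $\vert x-y\vert>\epsilon$ then $\vert x-y_{n}\vert>\epsilon$ eventually), and the reverse Fatou lemma is legitimate because the integrands are dominated by the constant $1$, which is integrable since $\mu$ is a probability measure; upper semicontinuity of $f(y)=\mu(B^{\epsilon}_{y})$ then exhibits $F_{\mu,\epsilon}=\bigcap_{n\geq1}\{f<1/n\}$ as a $G_{\delta}$, hence Borel. (One cannot hope for more than $G_{\delta}$ here: $F_{\mu,\epsilon}$ need not be closed --- take $\mu$ a point mass at $z$ and $y_{n}\to y$ with $\vert z-y\vert=\epsilon$ but $\vert z-y_{n}\vert>\epsilon$ for all $n$ --- so your route is the right one.) For the null property, your covering argument is exactly the Lindel\"{o}f-type use of separability: each $y\in F_{\mu,\epsilon}$ lies in some $B^{\epsilon/2}_{d}$ with $d\in D$ and, by the triangle inequality, $B^{\epsilon/2}_{d}\subseteq B^{\epsilon}_{y}$, so $\mu(B^{\epsilon/2}_{d})=0$; countable subadditivity over this countable subfamily then gives $\mu(F_{\mu,\epsilon})=0$. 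No gaps.
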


\begin{lemma}
If the support of $\mu$\ is connected then so is the support of the n-fold product
measure $\mu^{\otimes n}$ for any $n \geq 1$.
\label{remmeasureconnectedness}
\end{lemma}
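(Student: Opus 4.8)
The plan is to factor the statement into two classical facts and combine them. The first is the measure-theoretic identity that the support of a finite product measure is the Cartesian product of the supports, namely $\mathrm{supp}(\mu^{\otimes n}) = (\mathrm{supp}\,\mu)^{n}$. The second is the purely topological fact that a finite product of connected spaces is connected. Granting these, the conclusion is immediate: if $\mathrm{supp}\,\mu$ is connected then the product $(\mathrm{supp}\,\mu)^{n}$ is connected, and by the identity this set is precisely $\mathrm{supp}(\mu^{\otimes n})$.

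To establish the support identity I would work with the basis of open boxes $U_{1} \times \cdots \times U_{n}$ for the product topology, which suffices because $\mathbb{R}^{m}$ (indeed any second-countable space) admits a countable base, so every open neighbourhood of a point contains such a box. For the inclusion $(\mathrm{supp}\,\mu)^{n} \subseteq \mathrm{supp}(\mu^{\otimes n})$, I would take a point $(x_{1}, \ldots, x_{n})$ with each $x_{k} \in \mathrm{supp}\,\mu$ and any basic box $U_{1} \times \cdots \times U_{n}$ containing it; then $\mu^{\otimes n}(U_{1} \times \cdots \times U_{n}) = \prod_{k=1}^{n} \mu(U_{k}) > 0$, since each factor is positive by the definition of the support, so the point lies in $\mathrm{supp}(\mu^{\otimes n})$. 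For the reverse inclusion, if some coordinate satisfies $x_{k} \notin \mathrm{supp}\,\mu$, there is an open $U_{k} \ni x_{k}$ with $\mu(U_{k}) = 0$; the box $\mathbb{R}^{m} \times \cdots \times U_{k} \times \cdots \times \mathbb{R}^{m}$ is then an open neighbourhood of $(x_{1}, \ldots, x_{n})$ whose product measure vanishes, so the point is excluded from $\mathrm{supp}(\mu^{\otimes n})$.

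For the topological fact I would argue by induction on $n$, reducing to the base case $n = 2$. Fixing a point $b_{0}$ in the second factor $B$, each "cross" $(A \times \{b_{0}\}) \cup (\{a\} \times B)$ is connected, being the union of two connected sets (homeomorphic copies of $A$ and of $B$) that meet at the point $(a, b_{0})$; since all these crosses share the common connected set $A \times \{b_{0}\}$, their union, which is all of $A \times B$, is connected. Iterating gives connectedness of any finite product.

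Neither step presents a genuine obstacle, so the lemma is essentially bookkeeping. The only point demanding a little care is the appeal to second countability when passing from an arbitrary open neighbourhood to a basic box in verifying the support identity; everything else is a direct computation with product measures and the standard inductive argument for products of connected spaces.
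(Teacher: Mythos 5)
Your proof is correct. The paper does not actually prove this lemma: it is listed among ``three Lemmas [that] are well known results \ldots{} We state them without proof,'' so there is no in-paper argument to compare against; your factorization into the support identity $\mathrm{supp}(\mu^{\otimes n})=(\mathrm{supp}\,\mu)^{n}$ plus the topological fact that a finite product of connected spaces is connected is exactly the standard proof of this well-known statement, and both halves are carried out correctly (the cross argument for products, and the two inclusions for the support identity, using finiteness of $\mu$ so that the degenerate box has measure zero). Two minor remarks: the appeal to second countability is superfluous, since for a \emph{finite} product the open boxes $U_{1}\times\cdots\times U_{n}$ form a base of the product topology by definition, so every neighbourhood of a point contains such a box with no countability hypothesis; and the final gluing step implicitly uses that the subspace topology on $(\mathrm{supp}\,\mu)^{n}\subset\mathbb{R}^{mn}$ coincides with the product of the subspace topologies, which is what lets the two classical facts compose --- worth stating, though entirely routine.
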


\begin{lemma}
\label{lemmeasureconnectedness} Suppose that the support of a probability measure $\mu$ on
$\mathbb{R}^{m}$ is connected (see Section \ref{subsecNotandassumptions}),
then so is the support of the probability measure $\mu\ast\mathcal{U}_{B^{\epsilon}_{0}}$ for
any $\epsilon> 0$.
\end{lemma}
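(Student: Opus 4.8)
The plan is to reduce the statement to two standard facts: an identity describing the support of a convolution, and a purely topological observation about closed neighbourhoods of connected sets. Write $S = \mathrm{supp}(\mu)$ and note that $\mathrm{supp}(\mathcal{U}_{B^{\epsilon}_{0}}) = B^{\epsilon}_{0}$, since the uniform law on the closed ball has density (w.r.t.\ Lebesgue measure) strictly positive exactly on that ball. First I would establish the convolution support identity
\[
\mathrm{supp}(\mu \ast \mathcal{U}_{B^{\epsilon}_{0}}) = \overline{S + B^{\epsilon}_{0}},
\]
where $S + B^{\epsilon}_{0} = \{x + z : x \in S,\ z \in B^{\epsilon}_{0}\}$ is the Minkowski sum and the bar denotes closure.

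To prove the inclusion $\supseteq$, I would fix $a \in S$, $b \in B^{\epsilon}_{0}$ and show $a+b$ lies in the (closed) support: for any radius $\rho > 0$, every pair $x \in B^{\rho/2}_{a}$, $z \in B^{\rho/2}_{b}$ satisfies $x + z \in B^{\rho}_{a+b}$, so $(\mu \ast \mathcal{U}_{B^{\epsilon}_{0}})(B^{\rho}_{a+b}) \geq \mu(B^{\rho/2}_{a}) \, \mathcal{U}_{B^{\epsilon}_{0}}(B^{\rho/2}_{b}) > 0$ because $a \in S$ and $b \in B^{\epsilon}_{0}$ give each factor positive mass. For the inclusion $\subseteq$, if $y \notin \overline{S + B^{\epsilon}_{0}}$ then some ball about $y$ is disjoint from $S + B^{\epsilon}_{0}$; since $\mu$ and $\mathcal{U}_{B^{\epsilon}_{0}}$ assign full mass to $S$ and $B^{\epsilon}_{0}$ respectively, the double integral defining the convolution only charges pairs $(x,z) \in S \times B^{\epsilon}_{0}$, for which $x+z \in S + B^{\epsilon}_{0}$, so the ball about $y$ receives zero mass.

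The second step identifies $S + B^{\epsilon}_{0} = \bigcup_{x \in S} B^{\epsilon}_{x}$, the closed $\epsilon$-neighbourhood of $S$. Each ball $B^{\epsilon}_{x}$ is convex, hence connected, and contains its centre $x \in S$. I would then invoke the elementary fact that if $C$ is connected and $\{A_{i}\}$ is a family of connected sets each meeting $C$, then $C \cup \bigcup_{i} A_{i}$ is connected: any partition into two relatively open, disjoint, nonempty pieces must place the connected $C$ entirely on one side, and then each $A_{i}$, being connected and meeting $C$, lies on the same side, contradicting nonemptiness of the other piece. Applying this with $C = S$ and $A_{x} = B^{\epsilon}_{x}$ shows $S + B^{\epsilon}_{0} = \bigcup_{x \in S} B^{\epsilon}_{x}$ is connected. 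Since the closure of a connected set is connected, $\overline{S + B^{\epsilon}_{0}} = \mathrm{supp}(\mu \ast \mathcal{U}_{B^{\epsilon}_{0}})$ is connected, which is the claim.

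The main obstacle is the careful verification of the convolution support identity, in particular the inclusion $\subseteq$, which rests on the observation that $\mu$ and $\mathcal{U}_{B^{\epsilon}_{0}}$ concentrate on their supports so that only pairs $(x,z) \in S \times B^{\epsilon}_{0}$ contribute to the convolution; once this is in place the topological steps are routine. I would also remark that the closed $\epsilon$-neighbourhood of the closed set $S$ is already closed (being the preimage of $[0,\epsilon]$ under the continuous distance function $y \mapsto \mathrm{dist}(y,S)$), so the closure in the identity is harmless, although it is not even needed since connectedness is preserved under taking closures.
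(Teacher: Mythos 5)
Your proof is correct. There is nothing in the paper to compare it against: the paper lists this lemma among ``well known results concerned with the connectedness of the support of a measure'' and states it explicitly without proof, so your write-up supplies the argument the paper omits. The route you take is the standard one: the convolution support identity $\mathrm{supp}(\mu\ast\mathcal{U}_{B^{\epsilon}_{0}})=\overline{\mathrm{supp}(\mu)+B^{\epsilon}_{0}}$, with the inclusion $\supseteq$ from positivity of mass on small balls around points of the two supports and the inclusion $\subseteq$ from the fact that each factor concentrates on its support; the identification of the Minkowski sum with $\bigcup_{x\in\mathrm{supp}(\mu)}B^{\epsilon}_{x}$; the gluing fact that a union of connected sets each meeting a common connected set is connected; and the stability of connectedness under closure. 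All of these steps are sound, and your closing remark is also right: since $\mathrm{supp}(\mu)$ is closed and $B^{\epsilon}_{0}$ is compact, the Minkowski sum is already closed (equivalently, it is the preimage of $[0,\epsilon]$ under the distance function to $\mathrm{supp}(\mu)$, with the distance attained because closed subsets of $\mathbb{R}^{m}$ have the Heine--Borel property), so the closure operation is harmless.
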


The next lemma shows that adding noise to an observation will, in general,
result in a loss of information. The lemma after shows that for very large
amounts of noise the loss in information will be almost complete.

\begin{lemma}
\label{lemnoiseinfloss} Suppose that there exists a collection of
distributions $\mathbb{P}_{\theta}$ on some $\mathcal{Y}\subset\mathbb{R}^{m}$
parameterised by $\theta\in\Theta$ and with densities $p_{\theta}\left(
\cdot\right)  $ with respect to some common finite dominating measure $\mu$,
and that the densities $p_{\theta}\left(  \cdot\right)  $ are differentiable
w.r.t. $\theta$. For all $\theta\in\Theta$ and $\epsilon>0$ let $\mathbb{P}%
_{\theta}^{\epsilon}=\mathbb{P}_{\theta}\ast\mathcal{U}_{B_{0}^{\epsilon}}$.
Then for any $\theta\in\Theta$ and $\epsilon>0$
\begin{equation}
\mathbb{E}_{\mathbb{P}_{\theta}}\left[  \nabla_{\theta}\log p_{\theta}(Y).\nabla_{\theta}\log
p_{\theta}(Y)^{T}\right]  \geq\mathbb{E}_{\mathbb{P}_{\theta}^{\epsilon}%
}\left[  \nabla_{\theta}\log p_{\theta}^{\epsilon}(Y).\nabla_{\theta}\log p_{\theta}^{\epsilon
}(Y)^{T}\right]  \label{lemnoiseinflosspfeq1}%
\end{equation}
where $p_{\theta}^{\epsilon}(\cdot)$ denotes the density of the distribution
$\mathbb{P}_{\theta}^{\epsilon}$ with respect to the finite dominating measure
$\mu\ast\mathcal{U}_{B_{0}^{\epsilon}}$. Furthermore, if the supports of the  distributions $\mathbb{P}%
_{\theta}$ are all connected then we
have equality in (\ref{lemnoiseinflosspfeq1}) if and only if both quantities
are equal to the zero matrix.
\end{lemma}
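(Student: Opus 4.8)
The plan is to prove this by a data-processing (conditional Jensen) argument: the perturbed observation $Y^{\epsilon}=Y+\epsilon Z$ is obtained from $Y$ through a channel (addition of independent uniform noise) that does not depend on $\theta$, so the Fisher information can only decrease. First I would establish the key representation of the perturbed score as a conditional expectation of the original score. Writing $S_{\theta}(y)=\nabla_{\theta}\log p_{\theta}(y)$ and, in analogy with \eqref{EqnPertCondLaw}, $p_{\theta}^{\epsilon}(y)=\mu(B^{\epsilon}_{y})^{-1}\int_{B^{\epsilon}_{y}}p_{\theta}(y')\mu(dy')$, I differentiate under the integral sign (justified by the assumed differentiability of $p_{\theta}$ together with dominated convergence) to obtain
\[
\nabla_{\theta}\log p_{\theta}^{\epsilon}(y)=\frac{\int_{B^{\epsilon}_{y}} S_{\theta}(y')\,p_{\theta}(y')\,\mu(dy')}{\int_{B^{\epsilon}_{y}} p_{\theta}(y')\,\mu(dy')}.
\]
The right-hand side is exactly $\mathbb{E}_{\mathbb{P}_{\theta}}[S_{\theta}(Y)\mid Y^{\epsilon}=y]$: conditional on $Y^{\epsilon}=y$, the variable $Y$ lies in $B^{\epsilon}_{y}$ with posterior density proportional to $p_{\theta}$, since the uniform noise kernel is constant on that ball. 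Hence, under $\mathbb{P}_{\theta}$, the random vector $\nabla_{\theta}\log p_{\theta}^{\epsilon}(Y^{\epsilon})$ equals $\mathbb{E}_{\mathbb{P}_{\theta}}[S_{\theta}(Y)\mid Y^{\epsilon}]$, while the marginal law of $Y^{\epsilon}$ is $\mathbb{P}_{\theta}^{\epsilon}$.

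Second, I would apply the matrix form of the law of total variance. With $\mathcal{G}=\sigma(Y^{\epsilon})$,
\[
\mathbb{E}_{\mathbb{P}_{\theta}}\!\big[S_{\theta}(Y)S_{\theta}(Y)^{T}\big]=\mathbb{E}_{\mathbb{P}_{\theta}}\!\big[\mathrm{Cov}(S_{\theta}(Y)\mid\mathcal{G})\big]+\mathbb{E}_{\mathbb{P}_{\theta}}\!\big[\mathbb{E}[S_{\theta}(Y)\mid\mathcal{G}]\,\mathbb{E}[S_{\theta}(Y)\mid\mathcal{G}]^{T}\big].
\]
By Step~1 the last term is precisely the perturbed Fisher information $\mathbb{E}_{\mathbb{P}_{\theta}^{\epsilon}}[\nabla_{\theta}\log p_{\theta}^{\epsilon}(Y)\nabla_{\theta}\log p_{\theta}^{\epsilon}(Y)^{T}]$, and the first term is the expectation of a positive semi-definite random matrix, hence positive semi-definite in the Loewner order. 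This yields \eqref{lemnoiseinflosspfeq1}.

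Third, for the equality case under connected supports, equality in \eqref{lemnoiseinflosspfeq1} forces $\mathbb{E}_{\mathbb{P}_{\theta}}[\mathrm{Cov}(S_{\theta}(Y)\mid\mathcal{G})]=0$; since the integrand is positive semi-definite its trace has zero expectation, so $\mathrm{Cov}(S_{\theta}(Y)\mid\mathcal{G})=0$ $\mathbb{P}_{\theta}^{\epsilon}$-a.s., i.e.\ for a.e.\ $y$ the score $S_{\theta}$ is ($\mu$-a.e.) constant on $B^{\epsilon}_{y}\cap\mathrm{supp}(\mathbb{P}_{\theta})$. I would then upgrade this ``locally constant on balls'' property to ``globally constant on the support'': overlapping balls carrying positive mass must share the same constant, and connectedness of the support lets one join any two points by a finite chain of such overlapping balls, so $S_{\theta}$ equals a single constant $c$ a.e.\ on $\mathrm{supp}(\mathbb{P}_{\theta})$. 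Finally, since the score integrates to zero, $c=\mathbb{E}_{\mathbb{P}_{\theta}}[S_{\theta}(Y)]=\nabla_{\theta}\int p_{\theta}\,d\mu=0$, so $S_{\theta}=0$ a.s.\ and both Fisher informations vanish; the converse is immediate.

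The main obstacle will be the chaining argument in Step~3: making rigorous, while carefully tracking $\mu$-null sets, the deduction that a function which is a.e.\ constant on every $\epsilon$-ball (restricted to a connected support) must be globally a.e.\ constant. This is where the connectedness hypothesis enters essentially, via a covering/overlap argument in the spirit of Lemma~\ref{lemmeasureconnectedness}; the positive-semidefiniteness and total-variance steps are otherwise routine.
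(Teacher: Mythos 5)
Your proposal is correct and follows essentially the same route as the paper's own proof: the paper likewise represents the perturbed score as $\mathbb{E}_{\mathbb{P}_{\theta}}\left[\nabla_{\theta}\log p_{\theta}(Y)\,\middle|\,Y^{\epsilon}\right]$ (obtained there via the Fisher identity rather than by differentiating under the integral), deduces \eqref{lemnoiseinflosspfeq1} by conditional Jensen (your law-of-total-variance decomposition is the matrix form of exactly that step), and settles the equality case by showing the score is a.s.\ locally constant, propagating the constant across overlapping balls of positive mass using connectedness of the support, and concluding the constant is zero because the score has zero mean. The only cosmetic difference is in the chaining: the paper covers a continuous curve in the support by finitely many $\epsilon$-balls, whereas you chain directly with overlapping balls, which is, if anything, slightly more faithful to the bare connectedness hypothesis.
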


\begin{proof}
Let $\theta\in\Theta$ be given and let $Y$ be a random variable distributed
according to $p_{\theta}(\cdot)$. Observe that given $\epsilon$ the quantity
$p_{\theta}^{\epsilon}(\cdot)$ is equal to the density of the random variable
$Y^{\epsilon}=Y+\epsilon Z$ (with respect to the appropriate dominating
measure) where $Z$ is an independent random variable and $Z\sim\mathcal{U}%
_{B_{0}^{1}}$. By a straightforward application of the Fisher identity and the
fact that $p_{\theta}(Y,Y^{\epsilon})=p_{\theta}(Y)\mathbb{I}_{B_{\epsilon}%
}(Y^{\epsilon}-Y)$ one has that $\nabla_{\theta}\log p_{\theta}^{\epsilon}(Y^{\epsilon
})=\mathbb{E}\left[  \nabla_{\theta}\log p_{\theta}(Y,Y^{\epsilon})|Y^{\epsilon
}\right]  =\mathbb{E}\left[  \nabla_{\theta}\log p_{\theta}(Y)|Y^{\epsilon}\right]  $
a.s. where $p_{\theta}(\cdot,\cdot)$ denotes the joint density of the random
variables $Y,Y^{\epsilon}$ from which it follows that for any $v\in
\mathbb{R}^{m}$, $v^{T}\nabla_{\theta}\log p_{\theta}^{\epsilon}(Y^{\epsilon
})=\mathbb{E}\left[  v^{T}\nabla_{\theta}\log p_{\theta}(Y)|Y^{\epsilon}\right]  $.
Furthermore given $v\in\mathbb{R}^{m}$ we have that
\begin{equation}
\begin{gathered} v^{T} \mathbb{E}_{\mathbb{P}_{\theta}} \left[ \nabla_{\theta}\log p_{\theta} (Y). \nabla_{\theta}\log p_{\theta} (Y)^{T} \right] v = \mathbb{E}_{\mathbb{P}_{\theta}} \left[ v^{T} \nabla_{\theta}\log p_{\theta} (Y). \nabla_{\theta}\log p_{\theta} (Y)^{T} v \right] , \\ v^{T} \mathbb{E}_{\mathbb{P}_{\theta}^{\epsilon}} \left[ \nabla_{\theta}\log p^{\epsilon}_{\theta} ( Y ) . \nabla_{\theta}\log p^{\epsilon}_{\theta} ( Y )^{T} \right] v = \mathbb{E}_{\mathbb{P}_{\theta}^{\epsilon}} \left[ v^{T} \nabla_{\theta}\log p^{\epsilon}_{\theta} ( Y ) . \nabla_{\theta}\log p^{\epsilon}_{\theta} ( Y )^{T} v \right] . \end{gathered} \label{lemnoiseinflosspfeq2}%
\end{equation}
Applying Jensen's inequality to \eqref{lemnoiseinflosspfeq2} yields
\[
v^{T}\mathbb{E}_{\mathbb{P}_{\theta}}\left[  \nabla_{\theta}\log p_{\theta}%
(Y).\nabla_{\theta}\log p_{\theta}(Y)^{T}\right]  v\geq v^{T}\mathbb{E}_{\mathbb{P}%
_{\theta}^{\epsilon}}\left[  \nabla_{\theta}\log p_{\theta}^{\epsilon}(Y).\nabla_{\theta}\log
p_{\theta}^{\epsilon}(Y)^{T}\right]  v
\]
for all $v\in\mathbb{R}^{m}$ from which \eqref{lemnoiseinflosspfeq1}
immediately follows.

We now prove the second assertion. Since the mapping $z\in\mathbb{R}%
\rightarrow z^{2}$ is strictly convex it further follows from Jensen's
inequality that for any $v\in\mathbb{R}^{m}$,
\[
v^{T}\mathbb{E}_{\mathbb{P}_{\theta}}\left[  \nabla_{\theta}\log p_{\theta}%
(Y).\nabla_{\theta}\log p_{\theta}(Y)^{T}\right]  v=v^{T}\mathbb{E}_{\mathbb{P}%
_{\theta}^{\epsilon}}\left[  \nabla_{\theta}\log p_{\theta}^{\epsilon}(Y).\nabla_{\theta}\log
p_{\theta}^{\epsilon}(Y)^{T}\right]  v
\]
if and only if $v^{T}\nabla_{\theta}\log p_{\theta}(Y)$ and hence $v^{T}\nabla_{\theta}\log
p_{\theta}(Y,Y^{\epsilon})$ is $\sigma\left(  Y^{\epsilon}\right)  $
measurable. Thus equality holds in \eqref{lemnoiseinflosspfeq1} if and only if
$v^{T}\nabla_{\theta}\log p_{\theta}(Y,Y^{\epsilon})$ is $\sigma\left(  Y^{\epsilon
}\right)  $ measurable for all $v\in\mathbb{R}^{m}$ which holds if and only if
$\nabla_{\theta}\log p_{\theta}(Y,Y^{\epsilon})$ is $\sigma\left(  Y^{\epsilon}\right)
$ measurable. Hence in order to prove the final part of the result it is
sufficient to show that $\nabla_{\theta}\log p_{\theta}(Y,Y^{\epsilon})$ is
$\sigma\left(  Y^{\epsilon}\right)  $ measurable if and only if it is equal to
zero a.s. Assume that $\nabla_{\theta}\log p_{\theta}(Y,Y^{\epsilon})$ is
$\sigma\left(  Y^{\epsilon}\right)  $ measurable. Then $\nabla_{\theta}\log p_{\theta
}(Y^{\epsilon})=\nabla_{\theta}\log p_{\theta}(Y,Y^{\epsilon})$ a.s.. Using the fact
that $\nabla_{\theta}\log p_{\theta}(y,y^{\epsilon})=\nabla_{\theta}\log p_{\theta}%
(y)\mathbb{I}_{B_{\epsilon}}(y^{\epsilon}-y)$ one then has that 
\begin{equation} \label{revision1}
\nabla_{\theta}\log
p_{\theta}(y)=\nabla_{\theta}\log p_{\theta}(y^{\prime})
\end{equation}
for $\mathbb{P}_{\theta}$
a.s. all $y,y^{\prime}$ such that $\left\vert y-y^{\prime}\right\vert
\leq2\epsilon$. 

Suppose now that $\nabla_{\theta}\log p_{\theta}(Y)$ is not $\mathbb{P}%
_{\theta}$ a.s. constant. Then there must exist $v$ and $\eta$
such that $\mathbb{P}_{\theta}( \vert \nabla_{\theta}\log p_{\theta}(Y) - v \vert \leq \eta),\mathbb{P}_{\theta}%
( \vert \nabla_{\theta}\log p_{\theta}(Y) - v \vert > \eta )>0$.  It then follows from Lemma \ref{lemnullprobprob} that
there must exist points $\underline{y}$ and $\overline{y}$ such that
for all
$\delta>0$
\begin{equation} \label{revision2}
\begin{gathered}
 \mathbb{P}_{\theta} ( \vert Y - \overline{y} \vert \leq \delta , \vert \nabla_{\theta}\log p_{\theta}( Y ) - v \vert \leq \eta ) > 0 , \\
  \mathbb{P}_{\theta} ( \vert Y - \underline{y} \vert \leq \delta , \vert \nabla_{\theta}\log p_{\theta}( Y ) - v \vert \leq \eta ) > 0 .
 \end{gathered}
\end{equation}
Since the support of $\mathbb{P}_{\theta}$ is connected there exists  a continuous curve $\mathcal{C} : [0,1] \to \mathbb{R}^{m}$ contained in the support of $\mathbb{P}%
_{\theta}$ such that $\mathcal{C}(0) = \overline{y}$ and $\mathcal{C}(0) = \underline{y}$.  By the continuity of $\mathcal{C}$ one can find a finite sequence of
open balls $B_{1}^{o},\ldots,B_{n}^{o}$ of radius less than or equal to $\epsilon$ such that $\underline{y}\in B_{1}^{o}%
$, $\overline{y}\in B_{n}^{o}$, $\mathcal{C}\subset\cup_{k=1}^{n}B_{k}^{o}$
and such that for every $1\leq k<n$, $B_{k}^{o}\cap B_{k+1}^{o}\cap
\mathcal{C}\neq\emptyset$. Consider any two neighbouring balls $B_{k}^{o}$ and
$B_{k+1}^{o}$. From the above we have that $\nabla_{\theta}\log p_{\theta}(\cdot)$ is
$\mathbb{P}_{\theta}$ a.s. constant on $B_{k}^{o}$ and $B_{k+1}^{o}$ and that
there exists some ball contained in $B_{k}^{o}\cap B_{k+1}^{o}$ with non zero
$\mathbb{P}_{\theta}$ mass and thus that $\nabla_{\theta}\log p_{\theta}(\cdot)$ is
$\mathbb{P}_{\theta}$ a.s. constant on $B_{k}^{o}\cup B_{k+1}^{o}$. Hence it
follows that $\nabla_{\theta}\log p_{\theta}(\cdot)$ is $\mathbb{P}_{\theta}$ a.s.
constant $\cup_{k=1}^{n}B_{k}^{o}$ which contradicts the assumption that
$\nabla_{\theta}\log p_{\theta}(Y)$ is not $\mathbb{P}_{\theta}$ a.s. constant. Thus it
follows that if $\nabla_{\theta}\log p_{\theta}(Y,Y^{\epsilon})$ is $\sigma\left(
Y^{\epsilon}\right)  $ measurable that $\nabla_{\theta}\log p_{\theta}(\cdot)$ must be
$\mathbb{P}_{\theta}$ a.s. equal to some constant $K$. Further, since
$\mathbb{E}\left[  \nabla_{\theta}\log p_{\theta}(Y)\right]  =0$ it then follows that
$K=0$. Conversely if $\nabla_{\theta}\log p_{\theta}(\cdot)=0$ a.s. then clearly it is
$\sigma\left(  Y^{\epsilon}\right)  $ measurable.
\end{proof}

\begin{lemma}
\label{lemlargenoiseinfasym} Suppose that there exists a collection of
distributions $\mathbb{P}_{\theta}$ on some $\mathcal{Y} \subset\mathbb{R}%
^{m}$ parameterised by the parameter vector $\theta\in\Theta$. Assume that for
every $\theta$ the corresponding distribution has a density $p_{\theta}
\left(  \cdot\right)  $ with respect to some common finite dominating measure
$\mu$, that the densities $p_{\theta} \left(  \cdot\right)  $ are continuously
differentiable w.r.t. $\theta$ and that the corresponding score functions
$\nabla_{\theta}\log p_{\theta} \left(  \cdot\right)  $ are uniformly bounded above in
norm by some some $K < \infty$. For all $\theta$ and $\epsilon$ let
$\mathbb{P}_{\theta}^{\epsilon} = \mathbb{P}_{\theta} \ast\mathcal{U}%
_{B^{\epsilon}_{0}}$. Then for any $\theta$ and any sequence of positive real
numbers $\epsilon_{n}$ such that $\epsilon_{n} \nearrow\infty$
\[
\lim_{n \to\infty} \mathbb{P}^{\epsilon_{n}}_{\theta} \left( \{ y : \left\vert
\nabla_{\theta}\log p^{\epsilon_{n}}_{\theta} ( y ) \right\vert > \delta\}\right)  = 0
\]
for all $\delta> 0$ where $p^{ \epsilon_{n}}_{\theta} ( \cdot)$ denotes the
density of the distribution $\mathbb{P}_{\theta}^{ \epsilon_{n}}$ with respect
to the finite dominating measure $\mu\ast\mathcal{U}_{B_{0}^{\epsilon_{n}}}$.
\end{lemma}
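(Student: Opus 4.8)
The plan is to reduce everything to the conditional-expectation representation of the perturbed score already established in the proof of Lemma \ref{lemnoiseinfloss}. Writing $Y^{\epsilon} = Y + \epsilon Z$ with $Y \sim \mathbb{P}_{\theta}$ and $Z \sim \mathcal{U}_{B^{1}_{0}}$ independent, that argument gives
\[
\nabla_{\theta}\log p^{\epsilon}_{\theta}(Y^{\epsilon}) = \mathbb{E}\left[ \nabla_{\theta}\log p_{\theta}(Y) \,\middle|\, Y^{\epsilon} \right] \quad \text{a.s.},
\]
so the perturbed score evaluated at a point $y^{\epsilon}$ is the average of the original score under the conditional law $\mathbb{Q}_{y^{\epsilon}}$ of $Y$ given $Y^{\epsilon} = y^{\epsilon}$. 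Since $p_{\theta}(y \mid Y^{\epsilon} = y^{\epsilon}) \propto p_{\theta}(y)\mathbb{I}_{B^{\epsilon}_{y^{\epsilon}}}(y)$, this conditional law is exactly $\mathbb{P}_{\theta}$ restricted to the ball $B^{\epsilon}_{y^{\epsilon}}$ and renormalised. The guiding idea is that when $\epsilon$ is enormous the ball absorbs almost all of the mass of $\mathbb{P}_{\theta}$, so $\mathbb{Q}_{y^{\epsilon}} \approx \mathbb{P}_{\theta}$ and the conditional average of the score is close to the unconditional average, which vanishes since $\mathbb{E}_{\mathbb{P}_{\theta}}[\nabla_{\theta}\log p_{\theta}(Y)] = 0$ (as used in the proof of Lemma \ref{lemnoiseinfloss}).

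To make this quantitative, I would fix $\delta > 0$ and first choose $\eta$ small enough that $2K\eta < \delta$, where $K$ is the uniform bound on the score. By tightness of $\mathbb{P}_{\theta}$ there is a radius $M$ with $\mathbb{P}_{\theta}(B^{M}_{0}) \geq 1-\eta$. Whenever $B^{\epsilon}_{y^{\epsilon}} \supseteq B^{M}_{0}$ — which by the triangle inequality holds as soon as $|y^{\epsilon}| \leq \epsilon - M$ — restricting $\mathbb{P}_{\theta}$ to a set of mass at least $1-\eta$ and renormalising moves it by at most $\eta$ in total variation, so $\|\mathbb{Q}_{y^{\epsilon}} - \mathbb{P}_{\theta}\|_{TV} \leq \eta$. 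Combining this with $|\nabla_{\theta}\log p_{\theta}| \leq K$ and the vanishing of the expected score gives
\[
\left| \nabla_{\theta}\log p^{\epsilon}_{\theta}(y^{\epsilon}) \right| = \left| \int \nabla_{\theta}\log p_{\theta}(y)\, \mathbb{Q}_{y^{\epsilon}}(dy) - \int \nabla_{\theta}\log p_{\theta}(y)\, \mathbb{P}_{\theta}(dy) \right| \leq 2K\eta < \delta,
\]
so the event $\{ |\nabla_{\theta}\log p^{\epsilon}_{\theta}(Y^{\epsilon})| > \delta \}$ is contained in $\{ |Y^{\epsilon}| > \epsilon - M \}$.

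It then remains to show $\mathbb{P}^{\epsilon_{n}}_{\theta}(|Y^{\epsilon_{n}}| > \epsilon_{n} - M) \to 0$ as $\epsilon_{n} \nearrow \infty$. Conditioning on $Y = y$ and using $|y + \epsilon Z| \leq |y| + \epsilon|Z|$, a sufficient condition for $|y + \epsilon Z| \leq \epsilon - M$ is $|Z| \leq 1 - (|y|+M)/\epsilon$; since $Z \sim \mathcal{U}_{B^{1}_{0}}$ satisfies $\mathbb{P}(|Z| \leq s) = s^{m}$, this event has probability at least $(1 - (|y|+M)/\epsilon)_{+}^{m}$, which tends to $1$ for each fixed $y$. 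Integrating over $y \sim \mathbb{P}_{\theta}$ and applying dominated convergence (the integrand is bounded by $1$) gives $\mathbb{P}^{\epsilon_{n}}_{\theta}(|Y^{\epsilon_{n}}| \leq \epsilon_{n} - M) \to 1$, and the claimed limit follows.

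I expect the main obstacle to be the middle step: passing from the set-containment ``$B^{\epsilon}_{y^{\epsilon}} \supseteq B^{M}_{0}$'' to a genuine total-variation estimate and then to the score bound, while handling the fact that the conditioning centre $y^{\epsilon}$ itself grows with $\epsilon$ rather than staying fixed. The explicit computation with the uniform law on the ball, together with tightness of $\mathbb{P}_{\theta}$, is precisely what keeps this uniform control tractable; the remaining ingredients (the Fisher-identity representation, the vanishing of the expected score, and boundedness of the score) are either already available from Lemma \ref{lemnoiseinfloss} or entirely standard.
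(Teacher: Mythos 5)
Your proof is correct and takes essentially the same route as the paper's: your Fisher-identity/conditional-law representation of the perturbed score as the renormalised average of the zero-mean, $K$-bounded original score over $B^{\epsilon}_{y^{\epsilon}}$ is algebraically the same identity the paper obtains by writing $\nabla_{\theta}\log p^{\epsilon_{n}}_{\theta}(y)$ as a ratio of integrals and using $\int \nabla_{\theta}p_{\theta}\,d\mu = 0$ to pass to the complement of the ball. Both arguments then reduce the claim to showing that, with $\mathbb{P}^{\epsilon_{n}}_{\theta}$-probability tending to one, the ball $B^{\epsilon_{n}}_{Y^{\epsilon_{n}}}$ contains a fixed ball carrying all but $\eta$ of the mass of $\mathbb{P}_{\theta}$, which is the same tightness argument in both cases (the paper via a two-radius union bound, you via conditioning on $Y$ and dominated convergence).
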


\begin{proof}
Let $\theta\in\Theta$ be given and let $Y$ be a random variable distributed
according to $\mathbb{P}_{\theta}$. As in the proof of Lemma
\ref{lemnoiseinfloss} we observe that given $\epsilon$ the quantity
$p^{\epsilon}_{\theta} ( \cdot)$ is equal to the density of the random
variable $Y^{\epsilon} = Y + \epsilon Z$ (again with respect to the
appropriate dominating measure) where $Z$ is an independent random variable
with $Z\sim\mathcal{U}_{B_{0}^{1}}$. Standard computations show that for any
$y$
\[
\begin{aligned} \nabla_{\theta}\log p^{\epsilon_{n}}_{\theta} ( y ) & = \frac{ \nabla_{\theta} \int p_{\theta} ( z ) \mathbb{I}_{B^{\epsilon_{n}} } \left( y - z \right) \nu(d z ) }{\int p_{\theta} ( z ) \mathbb{I}_{B^{\epsilon_{n}} } \left( y - z \right) \nu(d z ) } \\ & = \frac{ \nabla_{\theta} \int p_{\theta} ( z ) \left( 1 - \mathbb{I}_{(B^{\epsilon_{n}})^{C} } \left( y - z \right) \right) \nu(d z ) }{\int p_{\theta} ( z ) \mathbb{I}_{B^{\epsilon_{n}} } \left( y - z \right) \nu(d z ) } \\ & = \frac{ - \int \nabla_{\theta} p_{\theta} ( z ) \mathbb{I}_{(B^{\epsilon_{n}})^{C} } \left( y - z \right) \nu(d z ) }{\int p_{\theta} ( z ) \mathbb{I}_{B^{\epsilon_{n}} } \left( y - z \right) \nu(d z ) } \end{aligned}
\]
where the last equality follows from the dominated convergence theorem by
(A2), (A3), (A4) and (A5). Since $\left\vert \nabla_{\theta}\log p_{\theta} \left(  y
\right)  \right\vert \leq K$ it follows that $\left\vert \int\nabla_{\theta} p_{\theta}
( z ) \mathbb{I}_{(B^{\epsilon_{n}})^{C} } \left(  y - z \right)  \nu(d z )
\right\vert \leq K \mathbb{P}_{\theta} \left(  Y^{n} \in(B^{\epsilon_{n}}%
_{y})^{C} \right)  $ for all $y$. Hence the proof will follow once we
establish that for any $\delta^{\prime}$
\begin{equation}
\label{lemlargenoiseinfasymeq1}\limsup_{n \to\infty} \mathbb{P}_{\theta
}^{\epsilon_{n}} \left(  \{y : \mathbb{P}_{\theta} \left(  Y \in B^{\epsilon
_{n}}_{ y } \right)  \leq1 - \delta^{\prime} \}\right)  \leq\delta^{\prime} .
\end{equation}
Note that given any $\delta^{\prime}$ there exist $R < \infty$ and $r < 1$
such that $\mathbb{P}_{\theta} \left(  Y \in B^{R}_{0} \right)  $, $\mathbb{P}
\left(  Z \in B^{r}_{0} \right)  > 1 - \delta^{\prime} / 2$ and thus that for
any $\epsilon_{n} > 2 R / \left(  1 - r \right)  $ we have that $\mathbb{P}%
_{\theta} \left(  Y \in B^{R}_{0}, Y + \epsilon_{n} Z \in B^{\epsilon_{n} -
R}_{0} \right)  > 1 - \delta^{\prime}$. Clearly if $y \in B^{\epsilon_{n}%
-R}_{0}$ then $\mathbb{P}_{\theta} \left(  Y \in B^{\epsilon_{n}}_{ y }
\right)  \geq\mathbb{P}_{\theta} \left(  Y \in B^{R}_{ 0} \right)  > 1 -
\delta^{\prime}/2$ and so the result follows.
\end{proof}

The following result establishes a stability-like property of the filter as
the amount of noise in certain components of the observations becomes
infinite. Before we state the result we recall the extended HMM defined in
Section \ref{sec:simos}. Given a HMM $\left\{  X_{k} , Y_{k} \right\}  _{k
\geq0}$ and a perturbed version $\left\{  X_{k} , Y^{\epsilon}_{k} \right\}
_{k \geq0}$ (see \eqref{alg1pertHMM}) we define the extended HMM to be the
joint process $\left\{  X_{k} , Y_{k}, Y^{\epsilon}_{k} \right\}  _{k \geq0}$.
In other words given a HMM $\left\{  X_{k} , Y_{k} \right\}  _{k \geq0}$ and
some $\epsilon> 0$ the extended HMM is the process
\begin{equation}
\left\{  X_{k} , Y_{k} , Y^{\epsilon}_{k} \right\}
_{k \geq0} : =\left\{  X_{k} , Y_{k}, Y_{k} + \epsilon Z_{k} \right\}  _{k
\geq0}%
\label{eqextendedHMMdef}
\end{equation}
where $\left\{  Z_{k} \right\}  _{k \geq0}$ is such that for each $k\geq0$,
$Z_{k}\overset{\text{i.i.d.}}{\sim}\mathcal{U}_{B_{0}^{1}}$.

\begin{lemma}
\label{lemcondprobsatbasepsilongotozero} Let $\left\{  X_{k} , Y_{k} \right\}
_{k \geq0}$ be a HMM which satisfies (A3) and let $\left\{  X_{k} , Y_{k},
Y^{\epsilon}_{k} \right\}  _{k \geq0}$ be the corresponding extended HMM
defined in \eqref{eqextendedHMMdef}. Then for any $l < m$, sequences $j_{1} <
\cdots< j_{r}$, $\tilde{j}_{1} < \cdots< \tilde{j}_{s}$, any $j \leq
\min\left\{  l , j_{1}, \tilde{j}_{1} \right\}  $, $x \in\mathcal{X}$ and
$\delta> 0$
\begin{align}
\lefteqn{\lim_{\epsilon\to\infty} \mathbb{P}\left(  \left\Vert p
\left(  X_{l:m} \vert Y_{j_{1}:j_{r}} ; Y^{\epsilon}_{\tilde{j}_{1}:\tilde
{j}_{s}} ; X_{j} = x \right) \right. \right. }  \nonumber \\
& \qquad \qquad \qquad  \left. \left. - p \Big(  X_{l:m} \vert Y_{j_{1}:j_{r}} ;
X_{j} = x \right)  \right\Vert _{TV} > \delta \Big)  = 0 . \label{lemFiltStabeq9}
\end{align}

\end{lemma}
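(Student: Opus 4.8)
The plan is to exploit the fact that, as $\epsilon\to\infty$, the perturbed observations $Y^{\epsilon}_{\tilde{j}_{1:s}}$ become non-informative about the hidden chain, so that conditioning on them leaves the smoothing law of $X_{l:m}$ essentially unchanged. To make this precise I would first rewrite the left-hand conditional law as a reweighting of the right-hand one. Write $g,g^{\epsilon}$ for the emission densities of the given model (suppressing the parameter, as in the statement), condition throughout on $\mathcal{G}:=\{X_{j}=x\}\cap\{Y_{j_{1}:j_{r}}\text{ observed}\}$, and let $\mathbb{Q}$ denote the resulting law of the whole hidden chain. By the HMM conditional independence structure, given the entire chain the variables $Y^{\epsilon}_{\tilde{j}_{a}}=Y_{\tilde{j}_{a}}+\epsilon Z_{\tilde{j}_{a}}$ are independent across $a$ with conditional density (in the sense of \eqref{EqnPertCondLaw}) $g^{\epsilon}(\cdot\vert X_{\tilde{j}_{a}})$ depending only on $X_{\tilde{j}_{a}}$. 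Hence Bayes' rule gives
\[
p(X_{l:m}\vert\mathcal{G},Y^{\epsilon}_{\tilde{j}_{1:s}})=\frac{\mathbb{E}_{\mathbb{Q}}[W\,\mathbb{I}(X_{l:m}\in\cdot)]}{\mathbb{E}_{\mathbb{Q}}[W]},\qquad W:=\prod_{a=1}^{s}g^{\epsilon}(Y^{\epsilon}_{\tilde{j}_{a}}\vert X_{\tilde{j}_{a}}),
\]
while $p(X_{l:m}\vert\mathcal{G})=\mathbb{E}_{\mathbb{Q}}[\mathbb{I}(X_{l:m}\in\cdot)]$. Since marginalising cannot increase total variation distance, the quantity in \eqref{lemFiltStabeq9} is bounded by the TV distance between $\mathbb{Q}$ and $W\mathbb{Q}/\mathbb{E}_{\mathbb{Q}}[W]$, which in turn is at most $\tfrac{1}{2}(\rho-1)$, where $\rho:=\sup W(x_{\cdot})/W(x'_{\cdot})$ over all pairs of chain realisations.

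It therefore suffices to control $\rho$, and this is where the flatness of the perturbed likelihood enters. Using \eqref{EqnPertCondLaw} the single-site ratio is $g^{\epsilon}(y\vert x)/g^{\epsilon}(y\vert x')=\int_{B^{\epsilon}_{y}}g(y'\vert x)\nu(dy')/\int_{B^{\epsilon}_{y}}g(y'\vert x')\nu(dy')$, the factor $\nu(B^{\epsilon}_{y})^{-1}$ cancelling. Because $g(\cdot\vert x)$ integrates to one and is bounded above by $\overline{c}_{1}$ under (A3), each such integral lies in $[1-\overline{c}_{1}\nu((B^{\epsilon}_{y})^{C}),1]$. Consequently, on the event $E_{\delta''}:=\{\nu((B^{\epsilon}_{Y^{\epsilon}_{\tilde{j}_{a}}})^{C})\leq\delta''\text{ for all }a\}$, every single-site ratio is trapped in $[1-\overline{c}_{1}\delta'',(1-\overline{c}_{1}\delta'')^{-1}]$, whence $\rho\leq(1-\overline{c}_{1}\delta'')^{-s}\to1$ as $\delta''\to0$. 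Given $\delta$, I would fix $\delta''$ small enough that $\tfrac{1}{2}((1-\overline{c}_{1}\delta'')^{-s}-1)<\delta$; on $E_{\delta''}$ the total variation distance in \eqref{lemFiltStabeq9} is then below $\delta$, so $\{\|\cdots\|_{TV}>\delta\}\subseteq E_{\delta''}^{C}$.

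It remains to show that for each fixed $\delta''>0$ one has $\mathbb{P}(E_{\delta''})\to1$ as $\epsilon\to\infty$; this is a ``large additive noise swamps the signal'' statement of exactly the kind established in the proof of Lemma \ref{lemlargenoiseinfasym}. Since $\nu$ is finite, fix $R$ with $\nu((B^{R}_{0})^{C})<\delta''$; then $B^{\epsilon}_{Y^{\epsilon}_{\tilde{j}_{a}}}\supseteq B^{R}_{0}$, equivalently $|Y^{\epsilon}_{\tilde{j}_{a}}|\leq\epsilon-R$, forces $\nu((B^{\epsilon}_{Y^{\epsilon}_{\tilde{j}_{a}}})^{C})\leq\delta''$. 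Writing $Y^{\epsilon}_{\tilde{j}_{a}}=Y_{\tilde{j}_{a}}+\epsilon Z_{\tilde{j}_{a}}$ with $Z_{\tilde{j}_{a}}\sim\mathcal{U}_{B^{1}_{0}}$, and using that the conditional density of $Y_{\tilde{j}_{a}}$ w.r.t.\ $\nu$ is bounded by $\overline{c}_{1}$ under (A3) so that the laws of $Y_{\tilde{j}_{a}}$ are uniformly tight, for any target $\eta>0$ I would choose $R'$ and $r<1$ with $\mathbb{P}(Y_{\tilde{j}_{a}}\in B^{R'}_{0})$ and $\mathbb{P}(Z_{\tilde{j}_{a}}\in B^{r}_{0})$ both exceeding $1-\eta/(2s)$; then for all $\epsilon\geq(R+R')/(1-r)$ one has $|Y^{\epsilon}_{\tilde{j}_{a}}|\leq R'+\epsilon r\leq\epsilon-R$ off an event of probability at most $\eta/s$. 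A union bound over $a=1,\ldots,s$ gives $\mathbb{P}(E_{\delta''}^{C})\leq\eta$ for all large $\epsilon$, and since $\eta$ is arbitrary, $\mathbb{P}(E_{\delta''})\to1$; combined with the previous paragraph this yields \eqref{lemFiltStabeq9}. The only real subtlety is the bookkeeping: verifying the conditional-independence factorisation that produces $W$ (and noting the degenerate case $\tilde{j}_{a}\in\{j_{1},\ldots,j_{r}\}$, where the factor $g^{\epsilon}(Y^{\epsilon}_{\tilde{j}_{a}}\vert\cdot)$ is already constant in the hidden state and hence trivially flat), together with checking that all reference-measure normalisations cancel in the posterior. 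The main analytic input, that large uniform noise renders $B^{\epsilon}_{Y^{\epsilon}}$ asymptotically $\nu$-full, is precisely the mechanism already exploited in Lemma \ref{lemlargenoiseinfasym}.
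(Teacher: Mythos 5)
Your proposal is correct and takes essentially the same route as the paper: both arguments reduce the total-variation statement to showing that the single-site perturbed emission ratios $g^{\epsilon}(Y^{\epsilon}_{l}\vert x)/g^{\epsilon}(Y^{\epsilon}_{l}\vert x^{\prime})$ become uniformly close to one with probability tending to one, and both prove this flatness by combining uniform tightness of the observation laws under (A3) with the scaling of the uniform noise (your choice of $R$, $R^{\prime}$ and $r<1$ mirrors the paper's final display). The only difference is presentational: where the paper invokes the smoothing-law identity \eqref{lemFiltCondProbUpLowBoundseq3} and asserts the sufficiency of \eqref{lemFiltStabeq10}, you make that reduction explicit via the Bayes reweighting $W$ and the oscillation bound on the resulting Radon--Nikodym derivative, which cleanly fills in a step the paper leaves to the reader.
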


\begin{proof}
Clearly we can assume that $\left\{  j_{1} , \ldots, j_{r} \right\}
\cap\left\{  \tilde{j}_{1} , \ldots, \tilde{j}_{s} \right\}  = \emptyset$. Let
$k = \max\left\{  m, j_{r}, \tilde{j}_{s} \right\}  $, then using assumption
(A3) and the well known identity
\begin{align}
\label{lemFiltCondProbUpLowBoundseq3}
p \left(  X_{l:m} \vert
Y_{j_{1}:j_{r}} ; Y^{\epsilon}_{\tilde{j}_{1}:\tilde{j}_{s}} ; X_{j} = x
\right) \qquad \qquad \qquad \qquad \qquad \qquad \qquad \qquad \qquad \qquad  \nonumber\\
= \frac{ \int\prod_{u = j+1}^{k} q (x_{u-1}, x_{u}) \prod_{v=1}^{r} g
(Y_{j_{v}} \vert x_{j_{v}}) \prod_{w=1}^{s} g^{\epsilon} (Y^{\epsilon}_{j_{w}}
\vert x_{j_{w}}) d \mu( x_{j+1:l-1;m+1:k} ) }{ \int\prod_{u = j+1}^{k} q
(x_{u-1}, x_{u}) \prod_{v=1}^{r} g (Y_{j_{v}} \vert x_{j_{v}}) \prod_{w=1}^{s}
g^{\epsilon} (Y^{\epsilon}_{j_{w}} \vert x_{j_{w}}) d \mu( x_{j+1:k} )
}\nonumber\\
\end{align}
where $g^{\epsilon} ( \cdot\vert\cdot)$ is as in (\ref{EqnPertCondLaw}) it
follows that in order to show (\ref{lemFiltStabeq9}) it is sufficient to show
that for any $l$ and $\delta> 0$
\begin{equation}
\label{lemFiltStabeq10}\lim_{\epsilon\to\infty} \mathbb{P}\left(  \sup_{x,
x^{\prime} \in\mathcal{X}} \left\vert \frac{g^{\epsilon} (Y^{\epsilon}_{l}
\vert x) }{ g^{\epsilon} (Y^{\epsilon}_{l} \vert x^{\prime}) } - 1 \right\vert
> \delta\right)  = 0 .
\end{equation}
In order to prove \eqref{lemFiltStabeq10} it is sufficient, by assumption
(A3), to show that for any $\delta> 0$
\begin{equation}
\label{lemFiltStabeq99999}\lim_{\epsilon\to\infty} \nu\ast\mathcal{U}%
_{B^{\epsilon}_{0}} \left(  y : \sup_{x, x^{\prime} \in\mathcal{X}} \left\vert
\frac{ \int_{B^{\epsilon}_{y}} g ( y^{\prime} \vert x ) \nu( d y^{\prime} ) }{
\int_{B^{\epsilon}_{y}} g ( y^{\prime} \vert x^{\prime} ) \nu( d y^{\prime} )
} - 1 \right\vert > \delta\right)  = 0.
\end{equation}
By assumption (A3) we have that for any $\delta^{\prime} > 0$ there exists
some $R_{\delta^{\prime}} < \infty$ such that for all $x \in\mathcal{X}$
\[
\int_{(B^{R_{\delta^{\prime}}}_{0})^{C}} g ( y \vert x ) \nu( d y ) <
\delta^{\prime} .
\]
It then follows that given the above $\delta$ there exists some $R_{\delta} <
\infty$ such that $\sup_{x, x^{\prime} \in\mathcal{X}} \left\vert \frac{
\int_{B^{\epsilon}_{y}} g ( y^{\prime} \vert x ) \nu( d y^{\prime} ) }{
\int_{B^{\epsilon}_{y}} g ( y^{\prime} \vert x^{\prime} ) \nu( d y^{\prime} )
} - 1 \right\vert \leq\delta$ for all $y$ such that $B^{R_{\delta}}_{0}
\subset B^{\epsilon}_{y}$. Thus in order to prove \eqref{lemFiltStabeq99999}
it is sufficient to show that for any $R > 0$, $\lim_{\epsilon\to\infty}
\nu\ast\mathcal{U}_{B^{\epsilon}_{0}} \left(  (B^{\epsilon- R}_{0})^{C}
\right)  = 0$. However for any $r \in(0,1)$ we have that
\begin{align*}
\limsup_{\epsilon\to\infty} \nu\ast\mathcal{U}_{B^{\epsilon}_{0}} \left(
(B^{\epsilon- R}_{0})^{C} \right)   &  \leq\limsup_{\epsilon\to\infty} \nu
\ast\mathcal{U}_{B^{\epsilon}_{0}} \left(  (B^{(1-r) \epsilon}_{0})^{C}
\right) \\
&  \leq\limsup_{\epsilon\to\infty} \left(  \nu\left(  (B^{r \epsilon}_{0})^{C}
\right)  + \mathcal{U}_{B^{\epsilon}_{0}} \left(  (B^{(1-2r)\epsilon}_{0})^{C}
\right)  \right)
\end{align*}
from which the result follows.
\end{proof}

The next five results are restatements of certain well-known stability
properties of the filter.

\begin{lemma}

\label{lemFiltStabStandard} Let $\left\{  X_{k},Y_{k}\right\}  $ be a HMM
which satisfies (A3) and let the process $\left\{  X_{k},Y_{k},Y_{k}%
^{\epsilon}\right\}  $ be the corresponding extended HMM defined as in
\eqref{eqextendedHMMdef}.  Then for all $k\leq l<m\leq n$, $j_{1}<\cdots<j_{r}$
and $\tilde{j}_{1}<\cdots<\tilde{j}_{s}$ such that $j_{1}\wedge\tilde{j}%
_{1}\geq k$,\ $j_{r}\vee\tilde{j}_{s}\leq n$, all $x_{k},x_{k}^{\prime}%
,x_{n},x_{n}^{\prime}\in\mathcal{X}$ and all sequences $Y_{j_{1}}%
,\ldots,Y_{j_{r}};Y_{\tilde{j}_{1}}^{\epsilon},\ldots,Y_{\tilde{j}_{s}%
}^{\epsilon}$
\begin{align}
&  \left\Vert \mathbb{P} \left(  X_{l:m}|Y_{j_{1:r}};Y_{\tilde{j}_{1:s}}^{\epsilon
};X_{k}=x_{k}\right)  - \mathbb{P} \left(  X_{l:m}|Y_{j_{1:r}};Y_{\tilde
{j}_{1:s}}^{\epsilon};X_{k}=x_{k}^{\prime}\right)
\right\Vert _{TV} \leq \rho^{(l-k)}  \nonumber\\
&\label{lemFiltStabStandardeq2}%
\end{align}
and
\begin{align}
&  \left\Vert \mathbb{P} \left(  X_{l:m}|Y_{j_{1:r}};Y_{\tilde{j}_{1:s}}^{\epsilon
};X_{k}=x_{k};X_{n}=x_{n}\right) \right. \nonumber \\
& \qquad \qquad \qquad  \left. - \mathbb{P} \left(  X_{l:m}|Y_{j_{1:r}};Y_{\tilde
{j}_{1:s}}^{\epsilon};X_{k}=x_{k}^{\prime},X_{n}=x_{n}^{\prime}\right)
\right\Vert _{TV}  \leq2\rho^{(l-k)\wedge(n-m)}  \nonumber\\
&\label{extralabelforv510revisiontom3}
\end{align}
where $\rho=\left(  1-\underline{c}_{1}^{2}\big/\overline{c}_{1}^{2}\right)  $.
\end{lemma}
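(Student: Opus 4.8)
The statement is a standard consequence of the strong mixing assumption (A3), so the plan is twofold: first transfer (A3) to the extended HMM \eqref{eqextendedHMMdef}, and then run the classical Dobrushin contraction argument for the hidden chain conditioned on the observations, exactly in the manner of \cite{doumouryd2004}. For the first step I would note that the extended process has the \emph{same} transition density $q_\theta$ as the original HMM, while the likelihoods attached to the coordinates $Y_{j_{1:r}}$ and $Y^\epsilon_{\tilde j_{1:s}}$ are $g_\theta$ and $g^\epsilon_\theta$ respectively. Since $g^\epsilon_\theta(y\mid x)$ is, by \eqref{EqnPertCondLaw}, an average of values $g_\theta(y'\mid x)$ with $y'\in B^\epsilon_y$, assumption (A3) forces $\underline{c}_1\leq g^\epsilon_\theta(y\mid x)\leq\overline{c}_1$. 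Hence every transition and every likelihood appearing in the smoother is bounded in $[\underline{c}_1,\overline{c}_1]$, i.e. the extended HMM satisfies (A3) with the same constants.

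Next I would reduce the block statements to statements about one or two marginal smoothing laws. For \eqref{lemFiltStabStandardeq2}, the Markov property gives that, conditionally on $X_l$, the block $X_{l:m}$ together with the observations at times $\geq l$ is independent of $X_k$; writing the conditional law of $X_{l:m}$ as a fixed (common) Markov kernel integrated against the conditional law of $X_l$, and using that total variation does not increase under a common kernel, the left-hand side of \eqref{lemFiltStabStandardeq2} is bounded by
\[
\left\Vert \mathbb{P}\!\left(X_l\in\cdot \mid Y_{j_{1:r}};Y^\epsilon_{\tilde j_{1:s}};X_k=x_k\right) - \mathbb{P}\!\left(X_l\in\cdot \mid Y_{j_{1:r}};Y^\epsilon_{\tilde j_{1:s}};X_k=x_k^{\prime}\right)\right\Vert_{TV}.
\]

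The core step is to bound this last quantity by $\rho^{(l-k)}$. Here I would use that the hidden chain \emph{conditioned on the full observation sequence} is again a (time-inhomogeneous) Markov chain; its one-step transition $T_u$ from time $u$ to $u+1$ has a density proportional to $q_\theta(x_u,x_{u+1})$ times a factor depending only on $x_{u+1}$ and the observations (a likelihood term, present or not, and a backward function), but \emph{never} on $x_u$. Consequently, for any two starting points the laws $T_u\delta_x$ and $T_u\delta_{x'}$ have mutual Radon--Nikodym derivative bounded in $[\underline{c}_1^2/\overline{c}_1^2,\overline{c}_1^2/\underline{c}_1^2]$, so their total variation distance is at most $1-\underline{c}_1^2/\overline{c}_1^2=\rho$; that is, $T_u$ has Dobrushin coefficient at most $\rho$. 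Since the conditional law of $X_l$ given the observations and $X_k=x_k$ is obtained by starting from $\delta_{x_k}$ and composing $T_k,\dots,T_{l-1}$, submultiplicativity of the Dobrushin coefficient yields the bound $\rho^{(l-k)}$, which proves \eqref{lemFiltStabStandardeq2}.

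Finally, for \eqref{extralabelforv510revisiontom3} I would condition additionally on the future endpoint $X_n=x_n$ and reduce, by the same Markov argument, to the pair of marginal smoothing laws of $X_l$ and of $X_m$. Writing $P_{x,x'}:=\mathbb{P}(X_{l:m}\mid Y_{j_{1:r}};Y^\epsilon_{\tilde j_{1:s}};X_k=x,X_n=x')$, the triangle inequality
\[
\left\Vert P_{x_k,x_n}-P_{x_k^{\prime},x_n^{\prime}}\right\Vert_{TV}\leq \left\Vert P_{x_k,x_n}-P_{x_k^{\prime},x_n}\right\Vert_{TV}+\left\Vert P_{x_k^{\prime},x_n}-P_{x_k^{\prime},x_n^{\prime}}\right\Vert_{TV}
\]
separates the task into forgetting the past condition $X_k$ (treating $X_n=x_n$ as one more pinned coordinate and arguing exactly as above, giving $\rho^{(l-k)}$) and forgetting the future pinning $X_n$. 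The latter is the identical contraction applied to the \emph{time-reversed} conditional chain, which again has transitions whose dependence on the pinned coordinate enters only through $q_\theta$ and hence satisfies the same Doeblin bound; it yields $\rho^{(n-m)}$. Adding the two contributions gives $\rho^{(l-k)}+\rho^{(n-m)}\leq 2\rho^{(l-k)\wedge(n-m)}$, as required. The only real work is bookkeeping: checking that the conditional-chain and backward-function decomposition is valid for an \emph{arbitrary} finite set of (possibly non-contiguous, mixed $Y$/$Y^\epsilon$) observation times; once (A3) is known to transfer to the extended HMM, the contraction estimates are entirely standard and I would simply cite \cite{doumouryd2004} for their detailed verification.
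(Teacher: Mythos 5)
Your proposal is correct and takes essentially the approach the paper relies on: the paper's own proof is a one-line appeal to standard filter-forgetting results (see \cite{del2004} and \cite{caprydmou2005}), and your argument --- transferring (A3) to the extended HMM via the observation that $g^{\epsilon}_{\theta}$ is an average of values of $g_{\theta}$ and hence obeys the same bounds, then running the Doeblin/Dobrushin contraction for the conditional hidden chain forwards in time (and for the time-reversed chain to forget the pinned endpoint $X_{n}$) --- is precisely the standard argument those references contain. If anything, you supply the details the paper omits.
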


\begin{proof}
Equations (\ref{lemFiltStabStandardeq2}) and (\ref{lemFiltStabeq5}) follow
immediately from standard results in the literature, see for example
\cite{del2004} and \cite{caprydmou2005}.
\end{proof}

\begin{corollary}
Let $\left\{  X_{k},Y_{k}\right\}  $ be a HMM
which satisfies (A3) and let the process $\left\{  X_{k},Y_{k},Y_{k}%
^{\epsilon}\right\}  $ be the corresponding extended HMM defined as in
\eqref{eqextendedHMMdef}.  Then for all $l\leq m$ and infinite sequences $\ldots,j_{-1},j_{0}$ and $\tilde{j}_{0},\tilde{j}_{1},\ldots$ the conditional probability laws $p\left(
X_{l:m}|Y_{j_{-\infty:0}} \right)$ and $p\left(
X_{l:m}|Y_{j_{-\infty:0}};Y_{\tilde{j}_{0:\infty}}^{\epsilon
}\right)$ exist and are well defined.  Further for any $x \in \mathcal{X}$

\begin{equation}
 \left\Vert \mathbb{P} \left(  X_{l:m}|Y_{j_{-k:0}};Y_{\tilde
{j}_{0:n}}^{\epsilon};X_{-k}=x\right) - \mathbb{P} \left(
X_{l:m}|Y_{j_{-\infty:0}};Y_{\tilde{j}_{0:\infty}}^{\epsilon
}\right) \right\Vert_{TV} \to 0 , \label{lemFiltStabeq5}%
\end{equation}
\begin{equation}
 \left\Vert \mathbb{P} \left(  X_{l:m}|Y_{j_{-k:0}};X_{-k}=x\right) - \mathbb{P} \left(
X_{l:m}|Y_{j_{-\infty:0}} \right) \right\Vert_{TV} \to 0 \label{extralabelforv510revisiontom4}
\end{equation}
as $k, n \to \infty$.
\end{corollary}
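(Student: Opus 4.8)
The plan is to show that the finite-horizon smoothing laws form a Cauchy net in total variation as $k,n\to\infty$ and to identify the limit with the claimed infinite-horizon conditional law, using that the probability measures on $\mathcal{X}^{m-l+1}$ are complete under the total variation metric (the total variation norm is complete on finite signed measures and the probability measures form a closed subset). Write $\mu_{k,n}$ for $\mathbb{P}\left(X_{l:m}\,|\,Y_{j_{-k:0}};Y^{\epsilon}_{\tilde{j}_{0:n}};X_{-k}=x\right)$. For $k'\geq k$ and $n'\geq n$ I would split
\[
\left\Vert \mu_{k,n}-\mu_{k',n'}\right\Vert_{TV} \leq \left\Vert \mu_{k,n}-\mu_{k',n}\right\Vert_{TV} + \left\Vert \mu_{k',n}-\mu_{k',n'}\right\Vert_{TV}
\]
and treat the extension of the conditioning in the past and in the future separately.

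For the first (past) term, the Markov property applied at the nearer boundary time expresses the more-distantly-conditioned law $\mu_{k',n}$ as a mixture $\int \mathbb{P}\left(X_{l:m}\,|\,Y_{j_{-k:0}};Y^{\epsilon}_{\tilde{j}_{0:n}};X_{-k}=x_{-k}\right)\lambda(dx_{-k})$ of the shorter-conditioned laws, where $\lambda$ is the conditional law of $X_{-k}$. Since $\mu_{k,n}$ is exactly this integrand evaluated at $x_{-k}=x$, the one-sided forgetting estimate \eqref{lemFiltStabStandardeq2} of Lemma \ref{lemFiltStabStandard} gives $\left\Vert \mu_{k,n}-\mu_{k',n}\right\Vert_{TV}\leq \rho^{\,l+k}$, which vanishes as $k\to\infty$ because $\rho\in(0,1)$.

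For the second (future) term I would condition, again by the Markov property, on the hidden state $X_{q}$ at a time $q$ lying beyond $m$ and beyond the $n$-th retained perturbed observation but before the first discarded one. Since conditioning on $X_{q}=x_{q}$ renders $X_{l:m}$ independent of the discarded future observations, both $\mu_{k',n}$ and $\mu_{k',n'}$ become integrals of the \emph{common} kernel $f(x_{q})=\mathbb{P}\left(X_{l:m}\,|\,Y_{j_{-k':0}};Y^{\epsilon}_{\tilde{j}_{0:n}};X_{-k'}=x,X_{q}=x_{q}\right)$ against two different mixing measures; the difference is therefore controlled by the oscillation of $f$ in $x_{q}$, and the two-sided bound \eqref{extralabelforv510revisiontom3} yields $\left\Vert f(x_{q})-f(x_{q}')\right\Vert_{TV}\leq 2\rho^{\,(l+k')\wedge(q-m)}$, hence $\left\Vert \mu_{k',n}-\mu_{k',n'}\right\Vert_{TV}\leq 4\rho^{\,(l+k')\wedge(q-m)}$. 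Taking $k'$ large first makes $l+k'$ large, and then letting $n$ grow forces $q-m\to\infty$, so this term too vanishes. Combining the two estimates shows $\{\mu_{k,n}\}$ is Cauchy; its total variation limit is a probability measure which I define to be $\mathbb{P}\left(X_{l:m}\,|\,Y_{j_{-\infty:0}};Y^{\epsilon}_{\tilde{j}_{0:\infty}}\right)$, establishing existence, and \eqref{lemFiltStabeq5} is then precisely the asserted convergence. The same left-boundary estimate ($\left\Vert \mu_{k,n}^{(x)}-\mu_{k,n}^{(x')}\right\Vert_{TV}\leq\rho^{\,l+k}$) shows the limit is independent of the initial point $x$, and \eqref{extralabelforv510revisiontom4} is the special case in which the future perturbed sequence is empty, so that the past-extension argument alone suffices.

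The main obstacle I anticipate is bookkeeping rather than conceptual: one must arrange the cut times so that each appeal to the Markov property genuinely decouples the retained data from the discarded data — in particular choosing $q$ strictly between consecutive future observation times and to the right of $m$ — and one must carry out the two extensions in the correct order, enlarging the past index $k'$ first so that the minimum in \eqref{extralabelforv510revisiontom3} is governed by $q-m$ and is not throttled by $l+k'$. It is this ordering that allows the coupled limit $k,n\to\infty$ to go through using only the forgetting bounds supplied by Lemma \ref{lemFiltStabStandard}.
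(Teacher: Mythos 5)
Your proof is correct, and it is in effect the argument the paper leaves unstated: the paper's own ``proof'' of this corollary is a single sentence asserting that \eqref{lemFiltStabeq5} and \eqref{extralabelforv510revisiontom4} are ``simple consequences of \eqref{lemFiltStabStandardeq2}''. Your Cauchy-net construction, with the limit produced by completeness of the space of (probability) measures under the total variation norm, is exactly the technique the paper itself deploys later when it needs such a limit explicitly (the construction of $\bar{\nabla_{\theta}}p$ in Lemma \ref{corGradDiffRate}), so in spirit you are on the paper's route while actually supplying the details. One point where your write-up is a genuine refinement of the paper's citation: extending the conditioning into the \emph{future} cannot be handled by the one-sided bound \eqref{lemFiltStabStandardeq2} alone, since one must control the oscillation, in the right-hand pin $x_{q}$, of the two-sided kernel $\mathbb{P}(X_{l:m}|\cdots;X_{-k'}=x;X_{q}=x_{q})$; this is precisely what the two-sided estimate \eqref{extralabelforv510revisiontom3} provides, and your decomposition makes that dependence explicit. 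Two small bookkeeping corrections: an integer ``strictly between consecutive future observation times'' need not exist (the $\tilde{j}$'s may be consecutive integers), but taking $q=\tilde{j}_{n+1}$, the first discarded observation time, works, since conditionally on $X_{q}$ the observation $Y^{\epsilon}_{q}$ and everything later is independent of the earlier block; and the decoupling requires $\tilde{j}_{n}>m$, which holds for all $n$ large enough because the $\tilde{j}$'s increase to infinity, after which $q-m\geq n+\tilde{j}_{0}+1-m\to\infty$. Also note that your ordering worry is unnecessary: with $k'\geq k$ the combined bound $\rho^{\,l+k}+2\rho^{\,(l+k)\wedge(q-m)}$ already vanishes in the joint limit. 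Finally, if one insists that the infinite-horizon conditional laws exist \emph{a priori} as regular conditional probabilities, one should add a line (Lévy's upward martingale convergence plus your $x$-independence estimate) identifying your total variation limit with them almost surely; defining them as the limit, as you do, is consistent with how the paper treats these objects.
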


\begin{proof}
Equations \eqref{lemFiltStabeq5} and \eqref{extralabelforv510revisiontom4} are simple consequences of \eqref{lemFiltStabStandardeq2}.
\end{proof}

\begin{corollary}
\label{lemFiltCondProbUpLowBounds} Let $\left\{  X_{k},Y_{k}\right\}  $ be a
HMM which satisfies (A3) and let $\left\{  X_{k},Y_{k},Y_{k}^{\epsilon
}\right\}  $ be the corresponding extended HMM defined as in
\eqref{eqextendedHMMdef}. Then for all $k<l$, $j_{1}<\cdots<j_{r}$ and
$\tilde{j}_{1}<\cdots<\tilde{j}_{s}$ such that $j_{1}\wedge\tilde{j}_{1}\geq
k$, all $x\in\mathcal{X}$ and all sequences $Y_{j_{1}},\ldots,Y_{j_{r}%
};Y_{\tilde{j}_{1}}^{\epsilon},\ldots,Y_{\tilde{j}_{s}}^{\epsilon}$

\begin{equation}
\frac{\underline{c}_{1}^{3}}{\overline{c}_{1}^{2}}\leq p(x_{l}|Y_{j_{1:r}%
};Y_{\tilde{j}_{1:s}}^{\epsilon};X_{k}=x)\leq\frac{\overline{c}_{1}^{3}%
}{\underline{c}_{1}^{2}} \label{lemFiltCondProbUpLowBoundseq1}%
\end{equation}
where the constants $\underline{c}_{1},\overline{c}_{1}$ are as in (A3) and the
central quantity in \eqref{lemFiltCondProbUpLowBoundseq1} denotes the density
of the corresponding conditional probability with respect to the dominating
measure $\mu$.
\end{corollary}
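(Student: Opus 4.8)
The plan is to read the bound off the forward--backward factorisation of the smoothing density, being careful to keep the forward filter normalised so that no spurious factor of $\mu(\mathcal{X})$ enters. As in the proof of Lemma \ref{lemcondprobsatbasepsilongotozero} I would first assume, without loss of generality, that $\{j_1,\ldots,j_r\}\cap\{\tilde{j}_1,\ldots,\tilde{j}_s\}=\emptyset$, so that at most one emission density is attached to each time. I would then record that (A3) bounds every factor appearing in \eqref{lemFiltCondProbUpLowBoundseq3}: the transitions satisfy $\underline{c}_1\le q(x,x')\le\overline{c}_1$ by hypothesis, and since by \eqref{EqnPertCondLaw} the perturbed emission $g^{\epsilon}(y\vert x)$ is an average of the values $g(y'\vert x)$, $y'\in B^{\epsilon}_y$, it too satisfies $\underline{c}_1\le g^{\epsilon}(y\vert x)\le\overline{c}_1$. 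In particular all densities are strictly positive, so no normaliser can degenerate.

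Using the Markov property I would split the conditioning information at time $l$. Let $F_l(x_l)$ denote the normalised density (so $\int F_l\,d\mu=1$) of $X_l$ given $X_k=x$ and the observations at times $\le l$, and let $\beta_l(x_l)$ denote the likelihood of the observations at times $>l$ given $X_l=x_l$. Conditional independence of past and future given $X_l$ then yields
\[
p\left(x_l\,\vert\,Y_{j_{1:r}};Y^{\epsilon}_{\tilde{j}_{1:s}};X_k=x\right)=\frac{F_l(x_l)\,\beta_l(x_l)}{\int F_l(x_l')\,\beta_l(x_l')\,\mu(dx_l')}.
\]
The forward factor is controlled by the standard filter recursion: the one-step predictor $\int q(x_{l-1},x_l)F_{l-1}(x_{l-1})\,\mu(dx_{l-1})$ lies in $[\underline{c}_1,\overline{c}_1]$ because $F_{l-1}$ integrates to one and $q$ is bounded, and incorporating the (at most one) emission factor at time $l$ via Bayes' rule, using that the predictor also integrates to one, gives $\underline{c}_1^2/\overline{c}_1\le F_l(x_l)\le\overline{c}_1^2/\underline{c}_1$.

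For the backward factor I would deliberately not bound $\beta_l$ itself but only its ratio: since $\beta_l(x_l)=\int q(x_l,x_{l+1})\psi(x_{l+1})\,\mu(dx_{l+1})$ for some $\psi$ not depending on $x_l$, boundedness of $q$ gives $\beta_l(x_l)/\beta_l(x_l')\in[\underline{c}_1/\overline{c}_1,\overline{c}_1/\underline{c}_1]$. Rewriting the displayed identity as $p(x_l\mid\cdots)=F_l(x_l)\big/\int F_l(x_l')\,\tfrac{\beta_l(x_l')}{\beta_l(x_l)}\,\mu(dx_l')$ and using $\int F_l\,d\mu=1$, the backward direction therefore contributes exactly one additional factor $\overline{c}_1/\underline{c}_1$, so that $\tfrac{\underline{c}_1}{\overline{c}_1}F_l(x_l)\le p(x_l\mid\cdots)\le\tfrac{\overline{c}_1}{\underline{c}_1}F_l(x_l)$. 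Combined with the filter bound this is precisely $\underline{c}_1^3/\overline{c}_1^2\le p\le\overline{c}_1^3/\underline{c}_1^2$. The degenerate cases---time $l$ carrying no emission, or $l$ exceeding all observation times so that $\beta_l\equiv1$---only slacken the bound and are subsumed by the same computation.

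The only genuinely delicate point is bookkeeping rather than analysis. If one instead bounds the numerator and denominator of \eqref{lemFiltCondProbUpLowBoundseq3} crudely and separately, a factor $\mu(\mathcal{X})\in[1/\overline{c}_1,1/\underline{c}_1]$ survives and one power of the constants is lost, producing the weaker exponents $(4,3)$ in place of $(3,2)$. The decomposition above avoids this by keeping $F_l$ a genuine probability density (so the normaliser is exactly one) and extracting from the backward direction only a ratio of $\beta_l$'s, which is where the sharp exponents originate. Alternatively, since this is a restatement of known uniform filter bounds, one could simply invoke \cite{del2004,caprydmou2005}.
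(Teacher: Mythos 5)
Your proof is correct and takes essentially the same route as the paper's: a Bayes (forward--backward) decomposition of the smoothing density at time $l$, with the forward filter/predictor bounded using boundedness of $q_\theta$ together with the fact that it integrates to one against $\mu$, and the backward likelihood controlled only through its ratios over different values of $x_l$. The only difference is bookkeeping---you attach the possible emission at time $l$ to the forward factor (giving $F_l\in[\underline{c}_1^2/\overline{c}_1,\,\overline{c}_1^2/\underline{c}_1]$ and backward ratio $\overline{c}_1/\underline{c}_1$), whereas the paper attaches it to the backward factor (filter bound $[\underline{c}_1,\overline{c}_1]$ and ratio $\overline{c}_1^2/\underline{c}_1^2$)---and the constants come out identically.
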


\begin{proof}
To simplify the exposition we shall only give a proof of \eqref{lemFiltCondProbUpLowBoundseq1} for conditional probabilities of the form $p ( x_{l} \vert Y_{j_{1:r}})$, the proof in the general case following in an identical manner.

It is clear by (A3) that when $j_{r}<l$\
\begin{equation}
\underline{c}_{1}\leq p(x_{l}|Y_{j_{1:r}};X_{k}=x)\leq\overline{c}_{1}
\label{lemFiltCondProbUpLowBoundsEq2}%
\end{equation}
Consider the case when $j_{r}\geq l$. Let $r^{\prime}$ be such that
$j_{r^{\prime}-1}<l\leq j_{r^{\prime}}$. By (A3) we have
\[
p(Y_{j_{r^{\prime}:r}}|X_{l}=x_{l})\leq\frac{\overline{c}_{1}^{2}}%
{\underline{c}_{1}^{2}}p(Y_{j_{r^{\prime}:r}}|X_{l}^{\prime}=x_{l}^{\prime})
\]
for any $x_{l}^{\prime}$ . Note that if $l<j_{r^{\prime}}$ one obtains the
tighter bound $p(Y_{j_{r^{\prime}:r}}|X_{l}=x_{l})\leq(\overline{c}%
_{1}/\underline{c}_{1})p(Y_{j_{r^{\prime}:r}}|X_{l}^{\prime}=x_{l}^{\prime})$.
Thus%
\begin{align*}
p(x_{l}|Y_{j_{1:r}};X_{k}=x)  &  =\frac{p(x_{l}|Y_{j_{1:r^{\prime}-1}}%
;X_{k}=x)p(Y_{j_{r^{\prime}:r}}|X_{l}=x_{l})}{\int p(x_{l}^{\prime
}|Y_{j_{1:r^{\prime}-1}};X_{k}=x)p(Y_{j_{r^{\prime}:r}}|X_{l}=x_{l}^{\prime
})\mu(dx_{l}^{\prime})}\\
&  \leq p(x_{l}|Y_{j_{1:r^{\prime}-1}};X_{k}=x)\frac{\overline{c}_{1}^{2}%
}{\underline{c}_{1}^{2}}%
\end{align*}
and the upper bound in (\ref{lemFiltCondProbUpLowBoundseq1}) is obtained using
(\ref{lemFiltCondProbUpLowBoundsEq2}). The lower bound in
(\ref{lemFiltCondProbUpLowBoundseq1}) is proved similarly.
\end{proof}

\begin{corollary}
\label{lemProdFiltStab} Let $\left\{  X_{k},Y_{k}\right\}  $ be a HMM which
satisfies (A3) and and let $\left\{  X_{k},Y_{k},Y_{k}^{\epsilon}\right\}  $
be the corresponding extended HMM defined as in \eqref{eqextendedHMMdef}. Then
for all $k\leq l\leq l^{\prime}<m\leq m^{\prime}$, $j_{1}<\cdots<j_{r}$ and
$\tilde{j}_{1}<\cdots<\tilde{j}_{s}$ such that $j_{1}\wedge\tilde{j}_{1}\geq
k$, and all $f,h\in L_{\infty}$, $x\in\mathcal{X}$
\begin{align}
\lefteqn{\left\vert \mathbb{E}\left[  f(X_{l:l^{\prime}})|Y_{j_{1:r}%
};Y_{\tilde{j}_{1:s}}^{\epsilon};X_{k}=x\right]  .\mathbb{E}\left[
h(X_{m:m^{\prime}})|Y_{j_{1:r}};Y_{\tilde{j}_{1:s}}^{\epsilon};X_{k}=x\right]
\right.  }\nonumber\\
&  \qquad\qquad\left.  -\mathbb{E}\left[  f(X_{l:l^{\prime}})h(X_{m:m^{\prime
}})|Y_{j_{1:r}};Y_{\tilde{j}_{1:s}}^{\epsilon};X_{k}=x\right]  \right\vert
\leq\left\Vert f\right\Vert _{\infty}\left\Vert h\right\Vert _{\infty}%
\rho^{m-l^{\prime}} \label{lemFiltStabStandardeq3}%
\end{align}
where $\rho$ is as in Lemma \ref{lemFiltStabStandard}.
\end{corollary}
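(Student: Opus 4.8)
The plan is to exploit the fact that, conditional on the observations, the hidden chain of the extended HMM is still a Markov chain, so that the two blocks $X_{l:l'}$ and $X_{m:m'}$ become conditionally independent once the pivot state $X_{l'}$ is fixed; the filter stability bound \eqref{lemFiltStabStandardeq2} then shows that the relevant conditional expectation depends only negligibly on this pivot.

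First I would abbreviate the conditioning by writing $\mathcal{F} = \{ Y_{j_{1:r}} ; Y^{\epsilon}_{\tilde{j}_{1:s}} ; X_{k} = x \}$ and define, for $y \in \mathcal{X}$,
\[
F(y) = \mathbb{E}[ f(X_{l:l'}) \vert X_{l'} = y ; \mathcal{F} ], \qquad H(y) = \mathbb{E}[ h(X_{m:m'}) \vert X_{l'} = y ; \mathcal{F} ],
\]
so that $|F| \leq \|f\|_{\infty}$ and $|H| \leq \|h\|_{\infty}$. Since the triple process $\{X_{k}, Y_{k}, Y^{\epsilon}_{k}\}$ is itself an HMM with hidden chain $\{X_{k}\}$, the smoothing law of the hidden chain given $\mathcal{F}$ is Markov; hence conditional on $X_{l'}$ the block $X_{l:l'}$ (which lies in the past up to $l'$, with endpoint $X_{l'}$ fixed) and the block $X_{m:m'}$ (which lies strictly in its future, as $l' < m$) are conditionally independent. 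Together with the tower property this yields the factorisation
\[
\mathbb{E}[ f(X_{l:l'}) h(X_{m:m'}) \vert \mathcal{F} ] = \int_{\mathcal{X}} F(y) H(y)\, p( X_{l'} = y \vert \mathcal{F} )\, \mu(dy),
\]
while the product of the two marginal conditional expectations equals $\big( \int F\, dp \big)\, \bar{H}$, where $\bar{H} = \int H\, dp = \mathbb{E}[ h(X_{m:m'}) \vert \mathcal{F} ]$ and $dp = p( X_{l'} = y \vert \mathcal{F})\, \mu(dy)$.

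The key step is to show that $H(\cdot)$ is nearly constant. For any $y, y' \in \mathcal{X}$ the difference $H(y) - H(y')$ is the difference of the $h$-expectations under the laws of $X_{m:m'}$ obtained by pinning $X_{l'} = y$ versus $X_{l'} = y'$; by the Markov property the additional conditioning $X_{k} = x$ is screened off by $X_{l'}$, so these are exactly the two laws compared in \eqref{lemFiltStabStandardeq2} with pivot time $l'$ and target block $X_{m:m'}$. As that block starts $m - l'$ steps after the pivot, the estimate gives a total variation distance at most $\rho^{m-l'}$, whence $|H(y) - H(y')| \leq \|h\|_{\infty}\, \rho^{m-l'}$ and therefore $|H(y) - \bar{H}| \leq \|h\|_{\infty}\, \rho^{m-l'}$ for all $y$. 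Writing the quantity on the left of \eqref{lemFiltStabStandardeq3} as
\[
\left| \int_{\mathcal{X}} F(y)\big( H(y) - \bar{H} \big)\, p( X_{l'} = y \vert \mathcal{F} )\, \mu(dy) \right|
\]
and bounding $|F| \leq \|f\|_{\infty}$ together with $|H(y) - \bar{H}| \leq \|h\|_{\infty}\, \rho^{m-l'}$ pointwise then delivers the claimed bound $\|f\|_{\infty}\, \|h\|_{\infty}\, \rho^{m-l'}$.

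The step I expect to be the main obstacle is the careful justification of the conditional Markov / conditional independence property under the mixed conditioning on the interleaved $Y$ and $Y^{\epsilon}$ observations together with the pinned state $X_{k} = x$: one must confirm that the extended process genuinely retains HMM structure so that its smoother factorises across the pivot $X_{l'}$, and that the single-pivot stability bound \eqref{lemFiltStabStandardeq2}, stated for perturbations of an earlier conditioning state, applies once $X_{k} = x$ is screened off by $X_{l'}$. This is essentially bookkeeping, but it is where the real content of the argument resides.
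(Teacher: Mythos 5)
Your proof is correct and takes essentially the same approach as the paper's: the paper sets $\Delta H=h(X_{m:m^{\prime}})-\mathbb{E}\left[h(X_{m:m^{\prime}})|Y_{j_{1:r}};Y_{\tilde{j}_{1:s}}^{\epsilon};X_{k}=x\right]$, bounds $\left\vert\mathbb{E}\left[\Delta H|Y_{j_{1:r}};Y_{\tilde{j}_{1:s}}^{\epsilon};X_{k}=x;X_{l}\right]\right\vert\leq\left\Vert h\right\Vert_{\infty}\rho^{m-l^{\prime}}$ via \eqref{lemFiltStabStandardeq2} exactly as you bound $|H(y)-\bar{H}|$, and then expresses the left-hand side of \eqref{lemFiltStabStandardeq3} as $\mathbb{E}\left[f(X_{l:l^{\prime}})\Delta H|Y_{j_{1:r}};Y_{\tilde{j}_{1:s}}^{\epsilon};X_{k}=x\right]$, which is the same pivot-and-screening argument you spell out in more detail.
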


\begin{proof}
Let
\[
\Delta H=h(X_{m:m^{\prime}})-\mathbb{E}\left[  h(X_{m:m^{\prime}})|Y_{j_{1:r}%
};Y_{\tilde{j}_{1:s}}^{\epsilon};X_{k}=x\right]  .
\]
It follows from (\ref{lemFiltStabStandardeq2}) that
\[
\left\vert \mathbb{E}\left[  \Delta H|Y_{j_{1:r}};Y_{\tilde{j}_{1:s}%
}^{\epsilon};X_{k}=x;X_{l}\right]  \right\vert \leq\left\Vert h\right\Vert
_{\infty}\rho^{m-l^{\prime}}.
\]
The proof is completed by noting that the difference of the two expectations
in (\ref{lemFiltStabStandardeq3}) can be expressed as
\[
\mathbb{E}\left[  f(X_{l:l^{\prime}})\Delta H|Y_{j_{1:r}};Y_{\tilde{j}_{1:s}%
}^{\epsilon};X_{k}=x\right]  .
\]

\end{proof}

\begin{rem}
\label{corlemProdFiltStabresextend} The proof of Corollary
\ref{lemProdFiltStab} actually yields the stronger result that the left hand
side of \eqref{lemFiltStabStandardeq3} is bounded above by
\[
\left\Vert h\right\Vert _{\infty}\rho^{m-l^{\prime}}\mathbb{E}\left[
\left\vert f(X_{l:l^{\prime}})\right\vert |Y_{j_{1:r}};Y_{\tilde{j}_{1:s}%
}^{\epsilon};X_{k}=x\right]  .
\]

\end{rem}

\begin{corollary}
\label{corFiltStabtwoended} Let $\left\{  X_{k},Y_{k}\right\}  $ be a HMM
which satisfies (A3) and and let $\left\{  X_{k},Y_{k},Y_{k}^{\epsilon
}\right\}  $ be the corresponding extended HMM defined as in
\eqref{eqextendedHMMdef}. Then for all $k^{\prime}\leq k\leq l<m$,
$j_{1}<\cdots<j_{r}$ and $\tilde{j}_{1}<\cdots<\tilde{j}_{s}$ such that
$j_{1}\wedge\tilde{j}_{1}\geq k^{\prime}$, $f\in L_{\infty}$, $x,x^{\prime}%
\in\mathcal{X}$ and $1\leq r_{b}\leq r_{e}\leq r$, $1\leq s_{b}\leq s_{e}\leq
s$ such that $j_{r_{b}}\wedge\tilde{j}_{s_{b}}\geq k$, $j_{r_{e}}\wedge
\tilde{j}_{s_{e}}\geq m$ and $l\geq j_{r_{b}}\vee\tilde{j}_{s_{b}}$ we have that
\begin{align}
\lefteqn{\left\vert \mathbb{E}\left[  f(X_{l:m})|Y_{j_{1:r}};Y_{\tilde
{j}_{1:s}}^{\epsilon};X_{k^{\prime}}=x^{\prime}\right]  -\mathbb{E}\left[
f(X_{l:m})|Y_{j_{r_{b}:r_{e}}};Y_{\tilde{j}_{s_{b}:s_{e}}}^{\epsilon}%
;X_{k}=x\right]  \right\vert }\nonumber\\
&  \qquad\qquad\qquad\qquad\qquad\qquad\qquad \quad \leq2\left\Vert f\right\Vert
_{\infty}\rho^{(j_{r_{e}}\wedge\tilde{j}_{s_{e}}-m)\wedge(l-j_{r_{b}}%
\vee\tilde{j}_{s_{b}})} \label{corFiltStabtwoendedeq1}%
\end{align}
where $\rho$ is as in Lemma \ref{lemFiltStabStandard}.
\end{corollary}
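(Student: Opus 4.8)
The plan is to reduce this two-sided comparison to a single invocation of the two-ended forgetting bound \eqref{extralabelforv510revisiontom3} by inserting the hidden states at the two innermost bracketing observation times and absorbing all of the decay into a \emph{bridge functional} rather than into the conditional laws. Write $a = j_{r_b} \vee \tilde{j}_{s_b}$ and $b = j_{r_e} \wedge \tilde{j}_{s_e}$; the hypotheses $l \ge j_{r_b} \vee \tilde{j}_{s_b}$ and $j_{r_e} \wedge \tilde{j}_{s_e} \ge m$ give $a \le l < m \le b$, and the exponent to be produced is precisely $(l-a) \wedge (b-m)$. Let $\mathcal{G}_1$ denote the $\sigma$-field generated by the full conditioning data $Y_{j_{1:r}}; Y_{\tilde{j}_{1:s}}^{\epsilon}; X_{k^{\prime}} = x^{\prime}$ and $\mathcal{G}_2$ that generated by the reduced data $Y_{j_{r_b:r_e}}; Y_{\tilde{j}_{s_b:s_e}}^{\epsilon}; X_k = x$, so that the two expectations appearing in \eqref{corFiltStabtwoendedeq1} are $\mathbb{E}[f(X_{l:m}) \mid \mathcal{G}_i]$ for $i = 1, 2$.

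First I would condition further on the pair $(X_a, X_b)$ and use the tower property to write, for each $i$,
\[
\mathbb{E}[f(X_{l:m}) \mid \mathcal{G}_i] = \int \Psi(x_a, x_b)\, p_i(dx_a, dx_b),
\]
where $p_i$ is the law of $(X_a, X_b)$ under $\mathcal{G}_i$ and $\Psi(x_a, x_b) := \mathbb{E}[f(X_{l:m}) \mid X_a = x_a, X_b = x_b, \{\text{observations at times strictly between } a \text{ and } b\}]$. The step requiring the most care --- and the main obstacle --- is to verify that $\Psi$ is the \emph{same} functional under both conditionings. By the Markov property of the extended HMM, conditioning on $X_a$ and $X_b$ makes $X_{l:m}$ (whose indices lie in $[a,b]$) independent of all states and of both families of observations outside $[a,b]$, so only the observations strictly interior to $[a,b]$ survive; since the retained index ranges satisfy $[j_{r_b}, j_{r_e}] \supseteq [a,b]$ and $[\tilde{j}_{s_b}, \tilde{j}_{s_e}] \supseteq [a,b]$, these interior observations are literally identical under $\mathcal{G}_1$ and $\mathcal{G}_2$. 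One must also note that any observation sitting exactly at $a$ or $b$ adds nothing once $X_a, X_b$ are fixed, since $Y_k$ and $Y_k^{\epsilon}$ are conditionally independent of the remaining states given $X_k$.

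The key estimate is that $\Psi$ is nearly constant. Applying \eqref{extralabelforv510revisiontom3} of Lemma \ref{lemFiltStabStandard} with the boundary times $k, n$ taken to be $a, b$ (legitimate because $a \le l < m \le b$ and all interior observation times lie in $[a,b]$) yields
\[
\bigl| \Psi(x_a, x_b) - \Psi(x_a^{\prime}, x_b^{\prime}) \bigr| \le 2\|f\|_{\infty}\, \rho^{(l-a)\wedge(b-m)}
\]
for all $x_a, x_b, x_a^{\prime}, x_b^{\prime} \in \mathcal{X}$, i.e.\ $\Psi$ has oscillation at most this quantity. To finish I would center $\Psi$ at the midpoint $c$ of its range, so that $\|\Psi - c\|_{\infty} \le \|f\|_{\infty}\rho^{(l-a)\wedge(b-m)}$, and exploit only that $p_1, p_2$ are probability measures:
\[
\bigl| \mathbb{E}[f(X_{l:m}) \mid \mathcal{G}_1] - \mathbb{E}[f(X_{l:m}) \mid \mathcal{G}_2] \bigr| = \left| \int (\Psi - c)\, dp_1 - \int (\Psi - c)\, dp_2 \right| \le 2\|\Psi - c\|_{\infty} \le 2\|f\|_{\infty}\rho^{(l-a)\wedge(b-m)},
\]
which is exactly \eqref{corFiltStabtwoendedeq1}. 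The conceptual point worth stressing is that the two smoothing laws $p_1$ and $p_2$ are never compared directly; doing so would merely reproduce the original two-sided stability problem. Instead all of the decay is extracted from the near-constancy of the bridge functional $\Psi$, whose endpoint arguments lie at distances $l-a$ and $b-m$ from the support $[l,m]$ of $f$.
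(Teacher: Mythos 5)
Your proof is correct and is essentially the paper's own argument: both condition on the bracketing states $X_{j_{r_{b}}\vee\tilde{j}_{s_{b}}}$ and $X_{j_{r_{e}}\wedge\tilde{j}_{s_{e}}}$, observe that the resulting bridge expectation is the same functional of the endpoint states under either conditioning (since the discarded observations lie strictly outside $[a,b]$ and are conditionally irrelevant given $X_{a},X_{b}$), and extract all of the decay from the two-ended bound \eqref{extralabelforv510revisiontom3}. The only cosmetic difference is the final averaging step: the paper writes the difference as a double integral of $\Psi(x_{a}^{\prime},x_{b}^{\prime})-\Psi(x_{a},x_{b})$ against the product of the two endpoint laws, whereas you centre $\Psi$ at the midpoint of its range --- the two devices yield the same constant $2$.
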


\begin{proof}
It is clear that $Y_{j_{r_{b}:r_{e}}}\subseteq Y_{j_{1:r}}$ and $Y_{\tilde
{j}_{s_{b}:s_{e}}}^{\epsilon}\subseteq$\ $Y_{\tilde{j}_{1:s}}^{\epsilon}$. By
conditioning on $X_{j_{r_{b}}\vee\tilde{j}_{s_{b}}}$ and $X_{j_{r_{e}}%
\wedge\tilde{j}_{s_{e}}}$, the difference of the two expectations in the left
hand-side of (\ref{corFiltStabtwoendedeq1}) can be expressed as
\begin{align*}
&  \int \left\vert  \mathbb{E}\left[  f(X_{l:m})|Y_{j_{r_{b}:r_{e}}};Y_{\tilde
{j}_{s_{b}:s_{e}}}^{\epsilon};x_{j_{r_{b}}\vee\tilde{j}_{s_{b}}}^{\prime
};x_{j_{r_{e}}\wedge\tilde{j}_{s_{e}}}^{\prime}\right]  \right. \\
& \qquad \qquad \qquad \qquad \qquad  \left.  -\mathbb{E}\left[  f(X_{l:m})|Y_{j_{r_{b}:r_{e}}};Y_{\tilde
{j}_{s_{b}:s_{e}}}^{\epsilon};x_{j_{r_{b}}\vee\tilde{j}_{s_{b}}};x_{j_{r_{e}%
}\wedge\tilde{j}_{s_{e}}}\right]  \right\vert \\
&  \times p\left(  x_{j_{r_{b}}\vee\tilde{j}_{s_{b}}}^{\prime},x_{j_{r_{e}%
}\wedge\tilde{j}_{s_{e}}}^{\prime}|Y_{j_{1:r}};Y_{\tilde{j}_{1:s}}^{\epsilon
};X_{k^{\prime}}=x^{\prime}\right) \\
& \qquad \qquad \qquad \qquad \qquad \times p\left(  x_{j_{r_{b}}\vee\tilde{j}_{s_{b}}},x_{j_{r_{e}}\wedge
\tilde{j}_{s_{e}}}|Y_{j_{r_{b}:r_{e}}};Y_{\tilde{j}_{s_{b}:s_{e}}}^{\epsilon
};X_{k}=x\right) \\
& \qquad  \times\mu(dx_{j_{r_{b}}\vee\tilde{j}_{s_{b}}}^{\prime})\mu(dx_{j_{r_{e}%
}\wedge\tilde{j}_{s_{e}}}^{\prime})\mu(dx_{j_{r_{b}}\vee\tilde{j}_{s_{b}}})\mu(dx_{j_{r_{e}}\wedge\tilde{j}_{s_{e}}}).
\end{align*}
The result now follows by bounding the difference of the two conditional expectations in the integrand using \eqref{extralabelforv510revisiontom3}.
\end{proof}

\begin{rem}
\label{rewtwocorsfnalt} Using exactly the same proofs as above one can show
that the conclusions of Corollaries \ref{lemProdFiltStab} and
\ref{corFiltStabtwoended} and Remark \ref{corlemProdFiltStabresextend} are
still valid if the functions $f(X_{l:l^{\prime}})$, $h(X_{m:m^{\prime}})$ and
$f(X_{l:m})$ in the statements of those results are replaced with the
functions $f(X_{l:l^{\prime}},Y_{l:l^{\prime}})$, $h(X_{m:m^{\prime}%
},Y_{m:m^{\prime}})$, $f(X_{l:m},Y_{l:m})$.
\end{rem}

The next result establishes certain properties of the gradient of the filter
conditioned on the infinite past, see \cite{legmev2000} or \cite{taddou2005}
for further information concerning the gradient of the filter.

\begin{lemma}
\label{corGradDiffRate} Let $\left\{  X_{k} , Y_{k} \right\}  $ be a
parameterised collection of HMMs which satisfy (A3)-(A5) and let $\left\{
X_{k} , Y_{k}, Y^{\epsilon}_{k} \right\}  $ be the corresponding extended HMMs
defined as in \eqref{eqextendedHMMdef}. Then for all $\theta\in G$ where $G$
is as in assumptions (A4) and (A5) and every sequence of observations $\ldots,
Y_{-1} ; Y^{\epsilon}_{1} , \ldots$ there exists an $\mathbb{R}^{d}$ valued
function $\bar{\nabla_{\theta}} p_{\theta; Y_{-\infty:-1} ; Y^{\epsilon}_{1:\infty} }
\left(  x_{0} \right)  $ in $L_{1} ( \mu)$ such that such that for all $k, n >
0$, $x \in\mathcal{X}$
\begin{align}
\lefteqn{ \sup_{f : \left\Vert f \right\Vert _{\infty} \leq1} \left\vert \int
f(x_{0}) \bar{ \nabla_{\theta}} p_{\theta; Y_{-\infty:-1} ; Y^{\epsilon}_{1:\infty} }
\left(  x_{0} \right)  \mu( d x_{0} ) \right.  }\nonumber\\
&  \qquad\left.  - \int f(x_{0}) \nabla_{\theta} p_{\theta} \left(  x_{0} \vert
Y_{-n:-1} ; Y^{\epsilon}_{1:k} ; X_{-n} = x \right)  \mu( d x_{0} )
\right\vert \leq C \rho^{ \frac{n}{2} \wedge\frac{k}{2} }
\label{lemGradFiltStabeq201}%
\end{align}
where $\rho$ is as in Lemma \ref{lemFiltStabStandard}, $C < \infty$ is a
global constant independent of $\theta$ and $\ldots, Y_{-1} ; Y^{\epsilon}_{1}
, \ldots$ and $\nabla_{\theta} p_{\theta} \left(  x_{0} \vert Y_{-n:-1} ; Y^{\epsilon
}_{1:k} ; X_{-n} = x \right)  $ denotes the gradient of the density of the
conditional law $\mathbb{P}_{\theta} \left(  x_{0} \vert Y_{-n:-1} ;
Y^{\epsilon}_{1:k} ; X_{-n} = x \right)  $ w.r.t. $\mu$.

Furthermore there exists $K<\infty$ such that for all $k,n>0$, $x$ and
$\theta\in G$
\begin{equation}
\nabla_{\theta} p_{\theta}\left(  x_{0} \vert Y_{-n:-1};Y_{1:k}^{\epsilon}%
;X_{-n}=x\right)  ,\bar{\nabla_{\theta}}p_{\theta; Y_{-\infty:-1};Y_{1:\infty
}^{\epsilon} }\left(  x_{0}\right)  \leq K \label{lemGradFiltStabeq4}%
\end{equation}
almost surely. Finally we have that for any $f\in L_{\infty}$
\begin{align}
\lefteqn{ \nabla_{\theta}\int f(x_{0})p_{\theta}\left(  x_{0} \vert Y_{-\infty
:-1};Y_{1:\infty}^{\epsilon} \right)  \mu(dx_{0}) }\nonumber\\
&  \qquad\qquad=\int f(x_{0})\bar{\nabla_{\theta}}p_{\theta; Y_{-\infty:-1}%
;Y_{1:\infty}^{\epsilon},\ldots}\left(  x_{0}\right)  \mu(dx_{0}),
\label{lemGradFiltStabeq305}%
\end{align}
where (\ref{lemGradFiltStabeq305}) defines a continuous function of $\theta$
on $G$.
\end{lemma}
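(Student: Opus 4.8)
The plan is to reduce the statement to a \emph{score decomposition} of the finite-window filter gradient and then exploit the geometric forgetting established in Corollaries \ref{lemProdFiltStab} and \ref{corFiltStabtwoended}. Writing the joint density of the hidden states and observations over the window $[-n,k]$ (conditioned on $X_{-n}=x$) as a product of transitions and conditional laws, one obtains from the Fisher identity the representation
\[
\nabla_{\theta} p_{\theta}\!\left(x_{0} \mid Y_{-n:-1};Y^{\epsilon}_{1:k};X_{-n}=x\right)
= p_{\theta}\!\left(x_{0}\mid \cdots\right)\Big(\Sigma_{\theta}(x_{0}) - \bar{\Sigma}_{\theta}\Big),
\]
where $\Sigma_{\theta}(x_{0})$ is the conditional expectation, given $X_{0}=x_{0}$ and all the observations, of the total score $\sum_{i}\nabla_{\theta}\log q_{\theta}(X_{i-1},X_{i}) + \sum_{i<0}\nabla_{\theta}\log g_{\theta}(Y_{i}\mid X_{i}) + \sum_{i>0}\nabla_{\theta}\log g^{\epsilon}_{\theta}(Y^{\epsilon}_{i}\mid X_{i})$, and $\bar{\Sigma}_{\theta}$ is the same expectation without conditioning on $X_{0}$. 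First I would record that each individual score term is bounded in norm by $\overline{c}_{2}$: for the transition and unperturbed observation scores this is (A5) directly, and for the perturbed observation score one checks, using $\nabla_{\theta}g_{\theta}=g_{\theta}\nabla_{\theta}\log g_{\theta}$ together with (A3) and (A5), that $\nabla_{\theta}\log g^{\epsilon}_{\theta}$ inherits the same bound.

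The heart of the argument is to control each increment $\mathbb{E}[\sigma_{i}\mid X_{0}=x_{0},\text{obs}] - \mathbb{E}[\sigma_{i}\mid\text{obs}]$, where $\sigma_{i}$ denotes the $i$-th score term, a bounded function of $(X_{i-1},X_{i},Y_{i})$. Because conditioning on $X_{0}=x_{0}$ only perturbs the smoothing law of $(X_{i-1},X_{i})$ by an amount that forgets geometrically in $|i|$, Corollary \ref{corFiltStabtwoended} and Lemma \ref{lemFiltStabStandard} (together with Remark \ref{rewtwocorsfnalt}, which permits the functions to depend on the $Y$'s as well) bound each such increment by a constant multiple of $\rho^{|i|}$. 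Summing the resulting geometric series over $i$ yields the almost-sure uniform bound \eqref{lemGradFiltStabeq4} with a finite global constant $K$, the series being dominated by $\overline{c}_{2}\sum_{i}\rho^{|i|}$ independently of $\theta$, $n$, $k$ and the observation sequence.

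Next I would define $\bar{\nabla_{\theta}}p_{\theta;Y_{-\infty:-1};Y^{\epsilon}_{1:\infty}}(x_{0})$ as the $L_{1}(\mu)$ limit of the finite-window gradients and establish the rate \eqref{lemGradFiltStabeq201}. The difference between the finite- and infinite-window gradients splits into two contributions: the score increments for indices $i$ lying outside $[-n,k]$, and the discrepancy, for indices inside the window, caused by replacing the infinite two-sided conditioning and the smoothing law of $X_{-n}$ by the fixed value $X_{-n}=x$. For indices with $|i|\le \tfrac{n}{2}\wedge\tfrac{k}{2}$ the boundary effect is controlled by $\rho^{\frac{n}{2}\wedge\frac{k}{2}}$ via Corollary \ref{corFiltStabtwoended}, while for the remaining indices the increments themselves are already $O(\rho^{|i|})$ and sum to $O(\rho^{\frac{n}{2}\wedge\frac{k}{2}})$; balancing the two at the midpoint is exactly what produces the stated half-exponent rate. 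Since the bound is uniform over $f$ with $\|f\|_{\infty}\le 1$, this simultaneously gives convergence in the dual ($L_{1}$) sense.

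Finally, the interchange \eqref{lemGradFiltStabeq305} and the continuity of the limiting gradient follow by applying Lemma \ref{lemrealanalresstand} to the sequence $\theta\mapsto \int f(x_{0})\,p_{\theta}(x_{0}\mid Y_{-n:-1};Y^{\epsilon}_{1:k};X_{-n}=x)\,\mu(dx_{0})$. These functions are continuously differentiable on $G$ by (A4), their values converge by filter stability (see \eqref{lemFiltStabeq5}), their gradients are uniformly bounded by \eqref{lemGradFiltStabeq4} and converge uniformly by \eqref{lemGradFiltStabeq201}; Lemma \ref{lemrealanalresstand} then yields differentiability of the limit with gradient equal to the limiting gradient, and continuity on $G$. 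The main obstacle is the rate step: one must set up the Fisher decomposition carefully so that the two-sided stability estimates of Corollary \ref{corFiltStabtwoended} apply to each score increment, and then manage the summation so that truncation error and boundary error are balanced to yield the $\rho^{\frac{n}{2}\wedge\frac{k}{2}}$ rate rather than a weaker one.
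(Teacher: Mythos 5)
Your proposal is correct and follows essentially the same route as the paper's proof: the same Fisher-identity score decomposition of the finite-window filter gradient, the same use of the forgetting estimates (Corollary \ref{lemProdFiltStab}, Corollary \ref{corFiltStabtwoended} and Remark \ref{rewtwocorsfnalt}) to bound each score increment geometrically and balance boundary versus truncation error at the window midpoint, and the same application of Lemma \ref{lemrealanalresstand} to obtain \eqref{lemGradFiltStabeq305}. The only cosmetic difference is that you construct $\bar{\nabla_{\theta}}p_{\theta;Y_{-\infty:-1};Y^{\epsilon}_{1:\infty}}$ directly as an $L_{1}(\mu)$ limit, whereas the paper shifts the gradient densities by a constant $K$ and invokes completeness of the space of finite positive measures under total variation; the two constructions are equivalent.
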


\begin{proof}
We begin by proving (\ref{lemGradFiltStabeq201}) and (\ref{lemGradFiltStabeq4}%
). First note that since it is sufficient to prove the results component wise
with respect to the vectors $\nabla_{\theta} p_{\theta} ( \cdot\vert\cdots)$ and $\bar{
\nabla_{\theta}} p_{\theta} ( \cdot\vert\cdots)$ we can assume that $d = 1$. For any
suitable $x$, $f$, $n$ and $k$
\begin{align}
\lefteqn{ \int f(x_{0}) \nabla_{\theta} p_{\theta} \left(  d
x_{0} \vert Y_{-n:-1} ; Y^{\epsilon}_{1:k} ; X_{-n} = x \right)  }\nonumber\\
&  = \sum_{j=-n}^{-1} \mathbb{E} \left[  f(X_{0}) \nabla_{\theta}\log\left(  q_{\theta
}(X_{j}, X_{j+1}) g_{\theta}(Y_{j} \vert X_{j}) \right)  \vert Y_{-n:-1} ;
Y^{\epsilon}_{1:k} ; X_{-n} = x \right] \nonumber \\ \label{lemGradFiltStabeq5}\\
&  - \sum_{j=-n}^{-1} \mathbb{E}_{\theta} \left[  f(X_{0}) \vert Y_{-n:-1} ;
Y^{\epsilon}_{1:k} ; X_{-n} = x \right]  \times\nonumber\\
&  \qquad\qquad\qquad\mathbb{E}_{\theta} \left[  \nabla_{\theta}\log\left(  q_{\theta
}(X_{j}, X_{j+1}) g_{\theta}(Y_{j} \vert X_{j}) \right)  \vert Y_{-n:-1} ;
Y^{\epsilon}_{1:k} ; X_{-n} = x \right] \label{lemGradFiltStabeq6}\\
\end{align}

\begin{align}
&  + \sum_{l=1}^{k} \mathbb{E}_{\theta} \left[  f(X_{0}) \nabla_{\theta}\log\left(
q_{\theta}(X_{l-1}, X_{l}) g_{\theta}(Y_{l} \vert X_{l}) \right)  \vert
Y_{-n:-1} ; Y^{\epsilon}_{1:k} ; X_{-n} = x \right] \nonumber \\  \label{lemGradFiltStabeq7}\\
&  - \sum_{l=1}^{k} \mathbb{E}_{\theta} \left[  f(X_{0}) \vert Y_{-n:-1} ;
Y^{\epsilon}_{1:k} ; X_{-n} = x \right]  \times\nonumber\\
&  \qquad\qquad\qquad\mathbb{E}_{\theta} \left[  \nabla_{\theta}\log\left(  q_{\theta
}(X_{l-1}, X_{l}) g_{\theta}(Y_{l} \vert X_{l}) \right)  \vert Y_{-n:-1} ;
Y^{\epsilon}_{1:k} ; X_{-n} = x \right]. \label{lemGradFiltStabeq8}%
\end{align}
By (A3), (A5), \eqref{corFiltStabtwoendedeq1} and Remark \ref{rewtwocorsfnalt}
we have that for all $f : \left\Vert f \right\Vert _{\infty} \leq1$, $x,
x^{\prime} \in\mathcal{X}$, $\theta\in G$, $k, k^{\prime}, n, n^{\prime} > 0$
and $j$ such that $-n^{\prime} \leq-n < j < k \leq k^{\prime}$ that
\begin{align}
\lefteqn{ \left\vert \mathbb{E} \left[  f(X_{0}) \nabla_{\theta}\log\left(  q_{\theta
}(X_{j}, X_{j+1}) g_{\theta}(Y_{j} \vert X_{j}) \right)  \vert Y_{-n:-1} ;
Y^{\epsilon}_{1:k} ; X_{-n} = x \right]  \right.  }  &  & \nonumber\\
&  & \left.  - \mathbb{E} \left[  f(X_{0}) \nabla_{\theta}\log\left(  q_{\theta}(X_{j},
X_{j+1}) g_{\theta}(Y_{j} \vert X_{j}) \right)  \vert Y_{-n^{\prime}:-1} ;
Y^{\epsilon}_{1:k^{\prime}} ; X_{-n^{\prime}} = x^{\prime} \right]
\right\vert \nonumber\\
&  & \leq\frac{2 \overline{c}_{1} \overline{c}_{2}}{\underline{c}_{1}} C
\rho^{(j+n) \wedge(k-j-1)} \label{lemGradFiltStabeq101}%
\end{align}
and
\begin{align}
\lefteqn{\left\vert \mathbb{E} \left[  f(X_{0}) \vert Y_{-n:-1} ; Y^{\epsilon
}_{1:k} ; X_{-n} = x \right]  \times\right.  }\nonumber\\
&  \qquad\mathbb{E} \left[  \nabla_{\theta}\log\left(  q_{\theta}(X_{j}, X_{j+1})
g_{\theta}(Y_{j} \vert X_{j}) \right)  \vert Y_{-n:-1} ; Y^{\epsilon}_{1:k} ;
X_{-n} = x \right] \nonumber\\
&  - \mathbb{E} \left[  f(X_{0}) \vert Y_{-n^{\prime}:-1} ; Y^{\epsilon
}_{1:k^{\prime}} ; X_{-n^{\prime}} = x^{\prime} \right]  \times\nonumber\\
&  \qquad\left.  \mathbb{E} \left[  \nabla_{\theta}\log\left(  q_{\theta}(X_{j},
X_{j+1}) g_{\theta}(Y_{j} \vert X_{j}) \right)  \vert Y_{-n^{\prime}:-1} ;
Y^{\epsilon}_{1:k^{\prime}} ; X_{-n^{\prime}} = x^{\prime} \right]
\right\vert .\nonumber\\
&  \qquad\qquad\qquad\qquad\qquad\qquad\qquad\leq4C \frac{\overline{c}_{1}
\overline{c}_{2}}{\underline{c}_{1}} \left(  1 + C \frac{\overline{c}_{1}%
}{\underline{c}_{1}} \right)  \rho^{(j+n) \wedge(k-j-1)}
\label{lemGradFiltStabeq202}%
\end{align}
where $\rho$ is as in Lemma \ref{lemFiltStabStandard}, $C$ is as in Corollary
\ref{corFiltStabtwoended} and $\underline{c}_{1}, \overline{c}_{1},
\overline{c}_{2}$ are as in assumption (A3) and (A5). Further by (A3), (A5)
and \eqref{lemFiltStabStandardeq3} it follows that for all $x \in\mathcal{X}$,
$\theta\in G$, $k, n > 0$ and $j \neq0$ that
\begin{align}
\lefteqn{ \left\vert \mathbb{E} \left[  f(X_{0}) \nabla_{\theta}\log\left(  q_{\theta
}(X_{j}, X_{j+1}) g_{\theta}(Y_{j} \vert X_{j}) \right)  \vert Y_{-n:-1} ;
Y^{\epsilon}_{1:k} ; X_{-n} = x \right]  \right.  }\nonumber\\
&  - \mathbb{E} \left[  f(X_{0}) \vert Y_{-n:-1} ; Y^{\epsilon}_{1:k} ; X_{-n}
= x \right]  \times\nonumber\\
&  \qquad\qquad\left.  \mathbb{E} \left[  \nabla_{\theta}\log\left(  q_{\theta}(X_{j},
X_{j+1}) g_{\theta}(Y_{j} \vert X_{j}) \right)  \vert Y_{-n:-1} ; Y^{\epsilon
}_{1:k} ; X_{-n} = x \right]  \right\vert \nonumber\\
&  \qquad\qquad\qquad\qquad\qquad\qquad\qquad\qquad\leq\frac{2\overline{c}_{1}
\overline{c}_{2}}{\underline{c}_{1}} \rho^{\left\vert j \right\vert
\wedge\left\vert j+1 \right\vert } . \label{lemGradFiltStabeq102}%
\end{align}
It thus follows from \eqref{lemGradFiltStabeq5}-\eqref{lemGradFiltStabeq102}
that for all $\theta\in G$ that for all $k,n \geq1$
\begin{align}
\lefteqn{ \sup_{x, x^{\prime} \in\mathcal{X}} \sup_{f : \left\Vert f
\right\Vert _{\infty} \leq1} \left\vert \int f(x_{0}) \nabla_{\theta} p_{\theta}
\left(  x_{0} \vert Y_{-n:-1} ; Y^{\epsilon}_{1:k} ; X_{-n} = x \right)  \mu(
d x_{0} ) \right.  }\nonumber\\
&  \qquad\qquad\left.  - \int f(x_{0}) \nabla_{\theta} p_{\theta} \left(  x_{0} \vert
Y_{-n^{\prime}:-1} ; Y^{\epsilon}_{1:k^{\prime}} ; X_{-n^{\prime}} =
x^{\prime} \right)  \mu( d x_{0} ) \right\vert \nonumber\\
&  \qquad\qquad\qquad\qquad\qquad\qquad\qquad\qquad\qquad\leq64 C^{2}
\frac{\overline{c}_{1}^{2} \overline{c}_{2}}{\underline{c}_{1}^{2} \rho}
\sum_{r = \frac{n}{2} \wedge\frac{k}{2}}^{\infty} \rho^{r} .
\label{lemGradFiltStabeq1}%
\end{align}
Further the first part of \eqref{lemGradFiltStabeq4} follows from
\eqref{lemGradFiltStabeq5}-\eqref{lemGradFiltStabeq8}, (A3) and (A5), the
uniform boundedness of the densities of conditional probability densities
$p_{\theta} \left(  x_{0} \vert Y_{-n:-1} ; Y^{\epsilon}_{1:k} ; X_{-n} = x
\right)  $ (Corollary \ref{lemFiltCondProbUpLowBounds}) and Remark
\ref{corlemProdFiltStabresextend}. Let $K$ be the constant bounding the first
part of \eqref{lemGradFiltStabeq4} and for any $x\in\mathcal{X}$, $k,n\geq0$
and observations $Y_{-n},\ldots,Y_{-1};Y_{1}^{\epsilon},\ldots,Y_{k}%
^{\epsilon}$ let
\begin{align}
\label{lemGradFiltStabeq302}\nabla_{\theta} p_{\theta}^{K}\left(  x_{0}|Y_{-n:-1}%
;Y_{1:k}^{\epsilon};X_{-n}=x\right)  =\nabla_{\theta} p_{\theta}\left(  x_{0}%
|Y_{-n:-1};Y_{1:k}^{\epsilon};X_{-n}=x\right)  +K.\nonumber\\
\end{align}
The functions $\nabla_{\theta} p_{\theta}^{K}(\cdot|\cdots)$ are densities with respect
to $\mu$ of a collection of (random) finite positive measures, each with total
mass equal to $K$ and for which \eqref{lemGradFiltStabeq1} clearly still
holds. Since the space of positive finite measures equipped with the total
variation norm is a Banach space (see e.g.~\cite{parste1985}) it follows from
\eqref{lemGradFiltStabeq1} that given a doubly infinite sequence of
observations $\ldots,Y_{-1};Y_{1}^{\epsilon},\ldots$ there exists some
positive finite measure $\bar{\mu}_{Y_{-\infty:-1};Y_{1:\infty}^{\epsilon}%
}^{K}$ such that for any $n \geq1$
\begin{align}
\lefteqn{\sup_{x\in\mathcal{X}}\sup_{f:\left\Vert f\right\Vert _{\infty}\leq
1}\left\vert \int f(x_{0})\nabla_{\theta} p_{\theta}^{K}\left(  x_{0}|Y_{-n:-1}%
;Y_{1:n}^{\epsilon};X_{-n}=x\right)  \mu(dx_{0}) \right.  }\nonumber\\
&  \qquad\qquad\left.  - \int f(x_{0})\bar{\mu}_{Y_{-\infty:-1};Y_{1:\infty
}^{\epsilon}}^{K}(dx_{0})\right\vert \leq64 C^{2} \frac{\overline{c}_{1}^{2}
\overline{c}_{2}}{\underline{c}_{1}^{2} \rho(1 - \rho) } \rho^{ \frac{n}{2} }.
\label{lemGradFiltStabeq301}%
\end{align}
It follows by definition that $\nabla_{\theta} p_{\theta}^{K}(\cdot|\cdots)\leq2K$ and
thus from \eqref{lemGradFiltStabeq301} that $\bar{\mu}_{Y_{-\infty
:-1};Y_{1:\infty}^{\epsilon}}^{K}\ll\mu$ and that its density $\bar{\nabla_{\theta}
}p_{\theta; Y_{-\infty:-1};Y_{1:\infty}^{\epsilon}}^{K}(\cdot)$ is bounded
above by $2K$. Equation (\ref{lemGradFiltStabeq201}) and the second part of
equation (\ref{lemGradFiltStabeq4}) now follow by letting $\bar{\nabla_{\theta}
}p_{\theta; Y_{-\infty:-1};Y_{1:\infty}^{\epsilon}}(\cdot)=\bar{\nabla_{\theta}
}p_{\theta; Y_{-\infty:-1};Y_{1:\infty}^{\epsilon}}^{K}(\cdot)-K$. We shall
prove \eqref{lemGradFiltStabeq305} by, for any $f \in L_{\infty}$, applying
Lemma \ref{lemrealanalresstand} to the sequence of functions
\begin{equation}
\label{lemGradFiltStabeq16}\bar{\mathbb{E}}_{\theta}\left[  f(X_{0}%
)|Y_{-n:-1};Y_{1:n}^{\epsilon};X_{-n}=x\right]
\end{equation}
for $n \geq1$ and $x \in\mathcal{X}$ arbitrary. Clearly the sequence of
functions in \eqref{lemGradFiltStabeq16} are continuously differentiable by
$(A2)$ and $(A4)$. In order to be able to apply Lemma
\ref{lemrealanalresstand} we further need to establish that the functions in
\eqref{lemGradFiltStabeq16} and their derivatives are uniformly bounded. This
follows from \eqref{lemFiltCondProbUpLowBoundseq1} and
\eqref{lemGradFiltStabeq4}, that
\[
\bar{\mathbb{E}}_{\theta}\left[  f(X_{0})|Y_{-n:-1};Y_{1:n}^{\epsilon}%
;X_{-n}=x\right]  \to\bar{\mathbb{E}}_{\theta}\left[  f(X_{0}) \vert
Y_{-\infty:-1};Y_{1:\infty}^{\epsilon} \right]
\]
uniformly which follows from \eqref{lemFiltStabeq5} and finally that the
sequence of derivatives of the functions in \eqref{lemGradFiltStabeq16} is
uniformly Cauchy which follows from \eqref{lemGradFiltStabeq201}.
\end{proof}

\begin{corollary}
\label{remcorGradDiffRateextension}
Assume the same conditions as in Lemma \ref{corGradDiffRate}.  Then results analogous to
those in \eqref{lemGradFiltStabeq201}, \eqref{lemGradFiltStabeq4} and
\eqref{lemGradFiltStabeq305} hold for the gradients of the conditional densities $p_{\theta}\left(
x_{0}|Y_{-n:-1};X_{-n}=x\right)$, $p_{\theta}^{\epsilon}\left(
x_{0}|Y_{-n:-1}^{\epsilon};X_{-n}=x\right)$ and $p_{\theta}\left(  x_{0}|Y_{-\infty:-1};Y_{m:\infty}^{\epsilon}\right)$. Furthermore for every sequence of observations
$Y_{-\infty:-1}$, $Y_{1:\infty}^{\epsilon}$ and integer $m \geq 1$
\begin{equation}
\left\vert \nabla_{\theta} p_{\theta}\left(  x_{0}|Y_{-\infty:-1};Y_{m:\infty
}^{\epsilon}\right)  -\nabla_{\theta} p_{\theta}\left(  x_{0}|Y_{-\infty:-1}\right)
\right\vert \leq C\rho^{\frac{m}{2}}\label{remcorGradDiffRateextensionEq}%
\end{equation}
where $\rho$ is as in Lemma \ref{lemFiltStabStandard} and $C<\infty$ is a
global constant independent of $\theta$, $x_{0}$, $Y_{-\infty:-1}$,
$Y_{1:\infty}^{\epsilon}$ and $m$.
\end{corollary}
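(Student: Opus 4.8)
The plan is to treat the corollary in two stages: first the qualitative analogues of \eqref{lemGradFiltStabeq201}, \eqref{lemGradFiltStabeq4} and \eqref{lemGradFiltStabeq305} for the three conditional densities $p_\theta(x_0|Y_{-n:-1};X_{-n}=x)$, $p_\theta^\epsilon(x_0|Y^\epsilon_{-n:-1};X_{-n}=x)$ and $p_\theta(x_0|Y_{-\infty:-1};Y^\epsilon_{m:\infty})$, and then the quantitative bound \eqref{remcorGradDiffRateextensionEq}. For the first stage the key observation is that the proof of Lemma \ref{corGradDiffRate} is built entirely on the filter-stability statements of Corollaries \ref{lemFiltStabStandard}, \ref{lemFiltCondProbUpLowBounds}, \ref{lemProdFiltStab} and \ref{corFiltStabtwoended}, together with Remarks \ref{corlemProdFiltStabresextend} and \ref{rewtwocorsfnalt}, all of which are stated for the extended HMM with arbitrary selections of observation indices drawn from either $\{Y_k\}$ or $\{Y^\epsilon_k\}$. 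Thus one simply re-runs that proof after specializing the conditioning set: for the first density the right block of perturbed observations is empty; for the second one uses that the perturbed HMM itself satisfies (A3)-(A5) (as established in the proof of Theorem \ref{thmNoisyABCAsyCon}), so the lemma applies to it verbatim; and for the third one takes the left block to be $Y_{-\infty:-1}$ and the right block to be $Y^\epsilon_{m:\infty}$. In each case the Fisher-identity decomposition \eqref{lemGradFiltStabeq5}--\eqref{lemGradFiltStabeq8}, the decorrelation estimates \eqref{lemGradFiltStabeq101}--\eqref{lemGradFiltStabeq102}, the Banach-space completeness argument producing the limiting density, and the appeal to Lemma \ref{lemrealanalresstand} for \eqref{lemGradFiltStabeq305} all carry over unchanged.

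For the quantitative bound \eqref{remcorGradDiffRateextensionEq} the approach is to represent both gradients through the Fisher identity and to compare the two score decompositions term by term. The decomposition of $\nabla_\theta p_\theta(x_0|Y_{-\infty:-1};Y^\epsilon_{m:\infty})$ produces ``left'' increments indexed by $j\le -1$ (built from $q_\theta$ and $g_\theta$) together with ``right'' increments indexed by $l\ge m$ (built from $q_\theta$ and $g^\epsilon_\theta$), whereas the decomposition of $\nabla_\theta p_\theta(x_0|Y_{-\infty:-1})$ contains only the left increments. The difference therefore splits into (a) the mismatch between the shared left increments under the two conditionings and (b) the extra right increments present only in the first. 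Each left-increment term is a covariance of $f(X_0)$ with a score increment at index $j$, and I would bound its mismatch in two complementary ways: by the decorrelation of $X_0$ from time $j$ via Corollary \ref{lemProdFiltStab} and Remark \ref{corlemProdFiltStabresextend}, which gives a factor decaying in $|j|$; and by the insensitivity to the distant right block via Corollary \ref{corFiltStabtwoended} (with Remark \ref{rewtwocorsfnalt} admitting functions of both $X$ and $Y$), which gives a factor of order $\rho^{m}$ uniformly in $j$ since time $0$ sits at separation at least $m$ from the block. Taking the smaller of the two bounds and summing over $j$ yields $O(\rho^{m/2})$, the halving of the exponent arising, as in the passage to \eqref{lemGradFiltStabeq1}, from trading the number of non-negligible summands against the geometric rate (concretely $m\rho^m\le C\rho^{m/2}$). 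The extra right increments in (b) each decay like a power of $\rho$ in the separation of $l\ge m$ from time $0$, again by Corollary \ref{lemProdFiltStab}, and sum to $O(\rho^m)$; combining (a) and (b) gives the claimed bound for the finite-window densities, and the passage to $p_\theta(x_0|Y_{-\infty:-1};Y^\epsilon_{m:\infty})$ and $p_\theta(x_0|Y_{-\infty:-1})$ follows from the existence and interchange statements established in the first stage.

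I expect the main obstacle to lie in the bookkeeping of step (a): one must check, uniformly in $\theta\in G$, in the initial point $x$, and in the observation sequences, that each left-increment mismatch is genuinely governed by the separation between its index and the block at times $\ge m$, and that on the relevant portion of the index range this separation is at least of order $m$, so that the crossover between the two competing bounds produces exactly the exponent $m/2$. This amounts to invoking Corollary \ref{corFiltStabtwoended} with sub-indices $r_b,r_e,s_b,s_e$ chosen so that the exponent in \eqref{corFiltStabtwoendedeq1} reduces to the desired power, then combining it with the uniform score bound $\overline{c}_2$ from (A5) and the density bounds of Corollary \ref{lemFiltCondProbUpLowBounds}. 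Once this accounting is in place the geometric summations and the limit $k,n\to\infty$ are routine.
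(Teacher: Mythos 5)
Your proposal is correct and follows essentially the same route as the paper: the paper also proves the first part by re-running the proof of Lemma \ref{corGradDiffRate} for the specialized conditioning sets, and obtains \eqref{remcorGradDiffRateextensionEq} by reducing to the finite-window inequality \eqref{extralabelforv510revisiontom10}, decomposing both gradients as in \eqref{lemGradFiltStabeq5}--\eqref{lemGradFiltStabeq8}, and bounding the terms individually via \eqref{lemGradFiltStabeq101}--\eqref{lemGradFiltStabeq102} together with the $2\rho^{m}$ total-variation insensitivity \eqref{extralabelforv510revisiontom11} to the perturbed block --- precisely your two complementary bounds and crossover argument yielding the exponent $m/2$. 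The only cosmetic difference is that you invoke Corollary \ref{corFiltStabtwoended} directly for the insensitivity step, where the paper cites the equivalent bound \eqref{extralabelforv510revisiontom11} derived from \eqref{extralabelforv510revisiontom3}.
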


\begin{proof}
The first part of the corollary can be proved in exactly the same way as Lemma \ref{corGradDiffRate}.

To prove the second part of the corollary it is sufficient to show that
\begin{equation}
\left\vert \nabla_{\theta} p_{\theta}\left(  x_{0}|Y_{-n:-1};Y_{m:n
}^{\epsilon} ; X_{-n}=x\right)  -\nabla_{\theta} p_{\theta}\left(  x_{0}|Y_{-n:-1} ; X_{-n}=x\right)
\right\vert \leq C\rho^{\frac{m}{2} \wedge \frac{n}{2}} \label{extralabelforv510revisiontom10}
\end{equation}
for some $C$ and all $\theta$, $Y_{-\infty:-1}$,
$Y_{1:\infty}^{\epsilon}$, $x$ and $n$.  Inequality \eqref{extralabelforv510revisiontom10} can be established by decomposing the two gradients that appear on its left hand side in an analogous manner to \eqref{lemGradFiltStabeq5}-\eqref{lemGradFiltStabeq8}.  The bound on the right hand side then follows by bounding the terms in this decomposition individually using \eqref{lemGradFiltStabeq101}-\eqref{lemGradFiltStabeq102} and the fact that
\begin{equation}  \label{extralabelforv510revisiontom11}
\mathbb{P}_{\theta}\left(  x_{0}|Y_{-n:-1};Y_{m:n
}^{\epsilon} ; X_{-n}=x\right)  - \mathbb{P}_{\theta}\left(  x_{0}|Y_{-n:-1} ; X_{-n}=x\right)
 \leq 2 \rho^{m}
\end{equation}
for all $\theta$, $Y_{-\infty:-1}$,
$Y_{1:\infty}^{\epsilon}$, $x$ and $n$ which follows immediately from \eqref{extralabelforv510revisiontom3}.
\end{proof}

\section*{Appendix B: Proofs of Lemmas \ref{lemexpllhcont},
\ref{lemABCGradApprox}, \ref{asymmissinf} and \ref{lemABCMLEcomp}}

\begin{proof}
[Proof of Lemma \ref{lemexpllhcont}]We begin by observing that a
straightforward consequence of assumption (A3) is that for all $(\theta,
\epsilon) \in\Theta\times\lbrack0,\infty)$, $r > 0$ and sequences $y_{-r},
\ldots,y_{1}$
\begin{equation}
\label{thmprobconvpfeq1}\underline{c}_{1} \leq p_{\theta}^{\epsilon}(y_{1}
\vert y_{-r} \ldots, y_{0}) \leq\overline{c}_{1} .
\end{equation}
Further by Lemma \ref{lemFiltStabStandard} it follows that the finite history
conditional likelihoods $p^{\epsilon}_{\theta} ( y_{1} \vert y_{-r} , \ldots,
y_{0})$ converge to the infinite history conditional likelihoods $p^{\epsilon
}_{\theta} ( y_{1} \vert\ldots, y_{0})$ as $r \to\infty$ uniformly in $\theta
$, $\epsilon$, the sequence of observations $\ldots, y_{0}, y_{1}$ and initial
distribution $\pi(x_{-r-1})$. Thus by \eqref{thmprobconvpfeq1} it follows that
in order to show continuity w.r.t. the first term and right continuity w.r.t.
the second term of the mapping $(\theta,\epsilon)\in\Theta\times
\lbrack0,\infty)\rightarrow l^{\epsilon}(\theta)$ it is sufficient to show
that these properties hold for the mapping
\begin{equation}
\label{thmprobconvpfeq2}(\theta,\epsilon)\in\Theta\times\lbrack0,\infty)
\rightarrow\bar{\mathbb{E}}_{\theta^{\ast}}\left[  \log p_{\theta}^{\epsilon
}(Y_{1} \vert Y_{-r:0}) \right]
\end{equation}
for all $r > 0$. For the rest of the proof we shall assume an arbitrary fixed
$r >0$ and initial distribution $\pi(x_{-r-1})$ are given. Observe that by
(A2), (A3) Lemma \ref{lemnullprobprob} and the dominated convergence theorem
the mapping $(\theta,\epsilon)\in\Theta\times( 0,\infty)\rightarrow
g^{\epsilon}_{\theta} ( y \vert x)$ is continuous w.r.t. its first term and
right continuous w.r.t. its second term for all $y \in\mathcal{Y}$ and $x
\in\mathcal{X}$. Thus by a second application of (A2), (A3) and the dominated
convergence theorem one immediately obtains these properties of the mapping
$(\theta,\epsilon)\in\Theta\times(0,\infty)\rightarrow p_{\theta}^{\epsilon
}(y_{1} \vert y_{-r} \ldots, y_{0})$ for any $r > 0$ and sequence $y_{-r},
\ldots,y_{1}$. A final application of (A2), (A3) and the dominated convergence
theorem along with the inequality \eqref{thmprobconvpfeq1} yield that the
mapping $\Theta\times\lbrack0,\infty) \rightarrow\mathbb{R}$ given in
\eqref{thmprobconvpfeq2} is also respectively continuous and right continuous.
In order to prove continuity w.r.t. the first term and right continuity w.r.t.
the second term of \eqref{thmprobconvpfeq2} on $\Theta\times[ 0,\infty)$ we
shall show for any sequences $\epsilon_{n} \searrow0$ and $\theta_{n}
\in\Theta\to\theta\in\Theta$, that
\begin{equation}
\label{thmprobconvpfeq3}\bar{\mathbb{E}}_{\theta^{\ast}}\left[  \log
p_{\theta_{n}}^{\epsilon_{n}}(Y_{1} \vert Y_{-r:0}) \right]  \to
\bar{\mathbb{E}}_{\theta^{\ast}}\left[  \log p_{\theta} (Y_{1} \vert Y_{-r:0})
\right]
\end{equation}
as $n \to\infty$. First note that
\[
\bar{\mathbb{E}}_{\theta^{\ast}}\left[  \log p_{\theta_{n}}^{\epsilon_{n}%
}(Y_{1} \vert Y_{-r:0}) \right]  = \bar{\mathbb{E}}_{\theta^{\ast}}\left[
\log p_{\theta_{n}}^{\epsilon_{n}} (Y_{-r}, \ldots,Y_{1}) - \log p_{\theta_{n}
}^{\epsilon_{n}} (Y_{-r}, \ldots,Y_{0}) \right]  .
\]
Thus in order to prove \eqref{thmprobconvpfeq3} it is sufficient to show that
\begin{equation}
\label{thmprobconvpfeq4}\bar{\mathbb{E}}_{\theta^{\ast}}\left[  \log
p_{\theta_{n} }^{\epsilon_{n}} (Y_{-r}, \ldots,Y_{1}) \right]  \to
\bar{\mathbb{E}}_{\theta^{\ast}}\left[  \log p_{\theta} (Y_{-r}, \ldots,Y_{1})
\right]
\end{equation}
and
\begin{equation}
\label{thmprobconvpfeq5}\bar{\mathbb{E}}_{\theta^{\ast}}\left[  \log
p_{\theta_{n} }^{\epsilon_{n}} (Y_{-r}, \ldots,Y_{0}) \right]  \to
\bar{\mathbb{E}}_{\theta^{\ast}}\left[  \log p_{\theta} (Y_{-r}, \ldots,Y_{0})
\right]
\end{equation}
as $n \to\infty$. We will now conclude the proof of the theorem by proving
\eqref{thmprobconvpfeq4} and observing that the proof of
\eqref{thmprobconvpfeq5} follows in a completely identical manner. The
differences in the values of the likelihoods in \eqref{thmprobconvpfeq4}
evaluated at different parameter values $\theta_{n}$ and $\theta$ can be
bounded by
\begin{align}
\lefteqn{ \left\vert p_{\theta_{n} }^{\epsilon_{n}} (Y_{-r}, \ldots,Y_{1}) - p_{\theta}^{\epsilon_{n}} (Y_{-r}, \ldots,Y_{1}) \right\vert } \notag \\
& \leq \bigg\vert \int_{\mathcal{X}^{r+3}} \bigg[\prod_{k=-r}^{1}
q_{\theta_{n}} (x_{k-1}, x_{k}) g^{\epsilon_{n}}_{\theta_{n}} (Y_{k} \vert x_{k}) \bigg] \pi (d x_{-r-1}) \mu( d x_{-r:1}) \notag \\
& \qquad \qquad - \int_{\mathcal{X}^{r+3}} \bigg[\prod_{k=-r}^{1}
q_{\theta} (x_{k-1}, x_{k}) g_{\theta_{n}}^{\epsilon_{n}} (Y_{k} \vert x_{k}) \bigg] \pi (d x_{-r-1}) \mu( d x_{-r:1}) \bigg\vert \notag \\
& + \bigg\vert \int_{\mathcal{X}^{r+3}} \bigg[\prod_{k=-r}^{1}
q_{\theta} (x_{k-1}, x_{k}) g^{\epsilon_{n}}_{\theta_{n}} (Y_{k} \vert x_{k}) \bigg] \pi (d x_{-r-1}) \mu( d x_{-r:1})  \notag \\
& \qquad \qquad - \int_{\mathcal{X}^{r+3}} \bigg[\prod_{k=-r}^{1}
q_{\theta} (x_{k-1}, x_{k}) g_{\theta}^{\epsilon_{n}} (Y_{k} \vert x_{k}) \bigg] \pi_{\theta} (d x_{-r-1}) \mu( d x_{-r:1}) \bigg\vert \notag \\
& \leq \overline{c}_{1}^{r+2}  \int_{\mathcal{X}^{r+3}}  \Bigg\vert \prod_{k=-r}^{1}
q_{\theta_{n}} (x_{k-1}, x_{k}) - \prod_{k=-r}^{1}
q_{\theta} (x_{k-1}, x_{k})  \Bigg\vert \pi (d x_{-r-1}) \mu( d x_{-r:1}) \notag \\
 \label{thmprobconvpfeq6} \\
& + \overline{c}_{1}^{r+1} \Bigg( \sum_{l = -r}^{1}  \int_{\mathcal{X}^{r+3}}    \left\vert g_{\theta_{n}}^{\epsilon_{n}} (Y_{l} \vert x_{l}) - g_{\theta}^{\epsilon_{n}} (Y_{l} \vert x_{l}) \right\vert  \pi (d x_{-r-1}) \mu( d x_{-r:1}) \Bigg) \notag \\
 \label{thmprobconvpfeq7}
\end{align}
where $\overline{c}_{1}$ is as in (A3) and \eqref{thmprobconvpfeq7} follows
from (A3), the definition of $g^{\epsilon}_{\theta} (\cdot\vert\cdot)$ and the
telescopic identity
\[
\prod_{k=1}^{n} a_{k} b_{k} - \prod_{k=1}^{n} a_{k} \hat{b}_{k} = \sum
_{l=1}^{n} \left(  \prod_{k=1}^{n} a_{k} \times( b_{l} - \hat{b}_{l})
\prod_{k=1}^{l-1} b_{k} \prod_{k=l+1}^{n} \hat{b}_{k} \right)
\]
which holds for any collection of reals $a_{1}, \ldots, a_{n};b_{1} ,
\ldots, b_{n}; \hat{b}_{1} , \ldots, \hat{b}_{n}$. Clearly assumptions (A2)
and (A3) and the dominated convergence theorem imply that the quantity in
\eqref{thmprobconvpfeq6} converges to zero as $\theta_{n}$ converges to
$\theta$. Furthermore the definition of $g^{\epsilon}_{\theta} (\cdot
\vert\cdot)$ and convergence of $\theta_{n}$ to $\theta$ imply that for any
$\delta> 0$ the limit supremum of \eqref{thmprobconvpfeq7} as $n \to\infty$ is
bounded above by
\begin{align}
\label{thmprobconvpfeq8}\limsup_{n \to\infty} \overline{c}_{1}^{r+1} \left(
\sum_{l = -r}^{1} \int_{\mathcal{X}^{r+3}} \frac{ \int_{B_{Y_{l}}%
^{\epsilon_{n}}} \Delta g^{\delta}_{\theta} (y \vert x_{l}) \nu( dy )}%
{\nu(B_{Y_{l}}^{\epsilon_{n}})} \pi(d x_{-r-1}) \mu( d x_{-r:1}) \right)
\nonumber\\
\end{align}
where for all $x \in\mathcal{X}$ and $y \in\mathcal{Y}$
\[
\Delta g^{\delta}_{\theta} (y \vert x) = \sup_{\theta^{\prime} : \left\vert
\theta^{\prime} - \theta\right\vert \leq\delta} \left\vert g_{\theta^{\prime}}
(y \vert x) - g_{\theta} (y \vert x) \right\vert .
\]
It then follows from (A3), \eqref{thmprobconvpfeq8}, the dominated convergence
theorem and the Lebesgue differentiation theorem (see \cite{whezyg1977},
Chapter 10) that
\begin{align}
\lefteqn{ \limsup_{n \to\infty} \overline{c}_{1}^{r+1} \left(  \sum_{l =
-r}^{1} \int_{\mathcal{X}^{r+3}} \left\vert g_{\theta_{n}}^{\epsilon_{n}}
(Y_{l} \vert x_{l}) - g_{\theta}^{\epsilon_{n}} (Y_{l} \vert x_{l})
\right\vert \pi(d x_{-r-1}) \mu( d x_{-r:1}) \right)  }\nonumber\\
&  \qquad\qquad\qquad\qquad\leq\overline{c}_{1}^{r+1} \left(  \sum_{l =
-r}^{1} \int_{\mathcal{X}^{r+3}} \Delta g^{\delta}_{\theta} (Y_{l} \vert
x_{l}) \pi(d x_{-r-1}) \mu( d x_{-r:1}) \right)  \label{thmprobconvpfeq9}%
\end{align}
for any $\delta> 0$. Next we observe that by (A2) we have that $\lim
_{\delta\to0} \Delta g^{\delta}_{\theta} (y \vert x) = 0$ for all $y
\in\mathcal{Y}$ and $x \in\mathcal{X}$ and hence that by applying (A3) and the
dominated convergence theorem to \eqref{thmprobconvpfeq9} we have that
\begin{align}
\label{thmprobconvpfeq10}\limsup_{n \to\infty} \overline{c}_{1}^{r+1} \left(
\sum_{l = -r}^{1} \int_{\mathcal{X}^{r+3}} \left\vert g_{\theta_{n}}%
^{\epsilon_{n}} (Y_{l} \vert x_{l}) - g_{\theta}^{\epsilon_{n}} (Y_{l} \vert
x_{l}) \right\vert \pi(d x_{-r-1}) \mu( d x_{-r:1}) \right)  = 0 .\nonumber\\
\end{align}
Thus it follows from \eqref{thmprobconvpfeq1}, \eqref{thmprobconvpfeq6},
\eqref{thmprobconvpfeq7} and \eqref{thmprobconvpfeq10} that for almost all
sequences of observations $Y_{-r}, \ldots, Y_{1}$ that
\begin{align}
\lim_{n \to\infty} \log p_{\theta_{n} }^{\epsilon_{n}} (Y_{-r}, \ldots,Y_{1})
&  = \lim_{n \to\infty} \log p_{\theta}^{\epsilon_{n}} (Y_{-r}, \ldots,Y_{1})
. \label{thmprobconvpfeq11}%
\end{align}
Since
\begin{align*}
\lefteqn{ p_{\theta}^{\epsilon} (Y_{-r}, \ldots,Y_{1}) }\\
&  = \int_{\mathcal{X}^{r+3}} \bigg[\prod_{k=-r}^{1} q_{\theta} (x_{k-1},
x_{k}) \frac{ \int_{B^{\epsilon}_{Y_{k}}} g_{\theta} (y \vert x_{k}) \nu( dy )
}{ \nu(B_{Y_{k}}^{\epsilon}) } \bigg] \pi(d x_{-r-1}) \mu( d x_{-r:1})
\end{align*}
we have that \eqref{thmprobconvpfeq4} now follows from
\eqref{thmprobconvpfeq1}, \eqref{thmprobconvpfeq11}, the Lebesgue
differentiation theorem, (A3) and the dominated convergence theorem.
\end{proof}

\begin{proof}
[Proof of Lemma \ref{lemABCGradApprox}]We start by showing that $l(\theta)$ is
continuously differentiable. First observe that by (A3), (A4), (A5),
\eqref{lemFiltStabeq5}, \eqref{lemFiltCondProbUpLowBoundseq1},
\eqref{lemGradFiltStabeq201}, \eqref{lemGradFiltStabeq4},
\eqref{lemGradFiltStabeq305} and the dominated convergence theorem we have
that for arbitrary $x \in\mathcal{X}$
\begin{equation}
\label{lemABCGradApproxeq3}\begin{gathered} \lim_{n \to \infty} \log p_{\theta} (Y_{1} \vert Y_{-n:0} ; X_{-n} = x) = \log p_{\theta} (Y_{1} \vert Y_{-\infty:0} ) \\ \lim_{n \to \infty} \nabla_{\theta} \log p_{\theta} (Y_{1} \vert Y_{-n:0} ; X_{-n} = x) = \nabla_{\theta} \log p_{\theta} (Y_{1} \vert Y_{-\infty:0} ) \end{gathered}
\end{equation}
uniformly in $\theta\in G$ and $\ldots, Y_{0} , Y_{1}$ and that the quantities
in \eqref{lemABCGradApproxeq3} are uniformly bounded. It follows from
\eqref{lemABCGradApproxeq3} that
\[
l(\theta) = \lim_{n \to\infty} \bar{\mathbb{E}}_{\theta^{\ast}}\left[  \log
p_{\theta}(Y_{1} \vert Y_{-n:0}; X_{-n} = x)\right]
\]
and hence from \eqref{lemABCGradApproxeq3} and Lemma \ref{lemrealanalresstand}
that $\nabla_{\theta} l(\theta)$ exists, is continuous and is equal to $\lim_{n
\to\infty} \bar{\mathbb{E}}_{\theta^{\ast}}\left[  \nabla_{\theta}\log p_{\theta}(Y_{1}
\vert Y_{-n:0}; X_{-n} = x)\right]  $. Since $g_{\theta}^{\epsilon}(y|x)$
defined in (\ref{EqnPertCondLaw}) satisfies all the conditions laid out in
(A3)-(A5), the same conclusion applies to $l^{\epsilon}(\theta)$. To prove the
corresponding results for $\nabla_{\theta}^{2} l(\theta)$ observe that by the Fisher
information identity
\begin{align*}
\lefteqn{ \nabla_{\theta}^{2} \bar{\mathbb{E}}_{\theta^{\ast}}\left[  \log p_{\theta
}(Y_{1} \vert Y_{-n:0}; X_{-n} = x)\right]  }\\
&  = \bar{\mathbb{E}}_{\theta^{\ast}}\left[  \nabla_{\theta}\log p_{\theta}(Y_{1} \vert
Y_{-n:0}; X_{-n} = x) \nabla_{\theta}\log p_{\theta}(Y_{1} \vert Y_{-n:0}; X_{-n} =
x)^{T} \right]  .
\end{align*}
Existence and continuity of $\nabla_{\theta}^{2} l(\theta)$ then follows from
\eqref{lemABCGradApproxeq3} and Lemma \ref{lemrealanalresstand} applied to the
functions $\nabla_{\theta}\log p_{\theta}(Y_{1} \vert Y_{-n:0}; X_{-n} = x) $.
Furthermore the fact that $\nabla_{\theta}^{2} l(\theta^{\ast}) = I(\theta^{\ast})$ now
follows from \cite{doumouryd2004}. We begin the proof of
\eqref{eq:uniformGradlnApprox1} by observing that from
\eqref{lemABCGradApproxeq3} and the identity $\log p_{\theta}(Y_{1}%
,\ldots,Y_{n} \vert X_{1} = x) = \sum_{k=1}^{n} \log p_{\theta}(Y_{k} \vert
Y_{1:k-1}; X_{1} = x)$ we have that
\[
\nabla_{\theta} l(\theta)=\lim_{n\rightarrow\infty}\frac{1}{n}\bar{\mathbb{E}}%
_{\theta^{\ast}}\left[  \nabla_{\theta}\log p_{\theta}(Y_{1},\ldots,Y_{n} \vert X_{1} =
x)\right]
\]
and similarly, for any $\epsilon>0$ that
\[
\nabla_{\theta} l^{\epsilon}(\theta)=\lim_{n\rightarrow\infty}\frac{1}{n}%
\bar{\mathbb{E}}_{\theta^{\ast}}\left[  \nabla_{\theta}\log p_{\theta}^{\epsilon}%
(Y_{1},\ldots,Y_{n} \vert X_{1} = x)\right]  .
\]
Thus it is sufficient to show that there exists some positive constant $R$
such that for any sequence $Y_{1},\ldots,Y_{n}$, initial distribution $\pi
_{0}$ and $\theta\in G$,
\begin{equation}
\left\vert \nabla_{\theta}\log p_{\theta}(Y_{1},\ldots,Y_{n} \vert X_{1} =
x)-\nabla_{\theta}\log p_{\theta}^{\epsilon}(Y_{1},\ldots,Y_{n} \vert X_{1} =
x)\right\vert \leq nR\epsilon. \label{lemABCGradApproxeq2}%
\end{equation}
For all $\theta\in G$, sequences $Y_{1},\ldots,Y_{n}$ and $Y_{1}^{\epsilon
},\ldots,Y_{n}^{\epsilon}$ drawn from the original and perturbed processes
respectively and $x \in\mathcal{X}$
\begin{align}
\lefteqn{\nabla_{\theta}\log p_{\theta}\left(  Y_{1},\ldots,Y_{n} \vert X_{1} = x
\right)  -\nabla_{\theta}\log p_{\theta}^{\epsilon}\left(  Y_{1}^{\epsilon}%
,\ldots,Y_{n}^{\epsilon} \vert X_{1} = x \right)  }\nonumber\\
&  =\sum_{i=1}^{n}\left(  \nabla_{\theta}\log p_{\theta}\left(  Y_{1:i};Y_{i+1:n}%
^{\epsilon} \vert X_{1} = x \right)  -\nabla_{\theta}\log p_{\theta}\left(
Y_{1:i-1};Y_{i:n}^{\epsilon}\vert X_{1} = x \right)  \right) \nonumber\\
&  =\sum_{i=1}^{n}\left(  \nabla_{\theta}\log p_{\theta}\left(  Y_{i}|Y_{1:i-1}%
;Y_{i+1:n}^{\epsilon} ; X_{1} = x \right)  \right. \nonumber\\
&  \qquad\qquad\qquad\qquad\left.  -\nabla_{\theta}\log p_{\theta}\left(
Y_{i}^{\epsilon}|Y_{1:i-1};Y_{i+1:n}^{\epsilon} ; X_{1} = x \right)  \right)
\nonumber\\
&  =\sum_{i=1}^{n}\left(  \int\nabla_{\theta} g_{\theta}\left(  Y_{i}|x_{i}\right)
p_{\theta}\left(  x_{i}|Y_{1:i-1};Y_{i+1:n}^{\epsilon} ; X_{1} = x \right)
\mu(dx_{i})\right. \nonumber\\
&  \qquad\qquad\qquad\qquad\left.  +\int g_{\theta}\left(  Y_{i}|x_{i}\right)
\nabla_{\theta} p_{\theta}\left(  x_{i}|Y_{1:i-1};Y_{i+1:n}^{\epsilon} ; X_{1} = x
\right)  \mu(dx_{i})\right) \nonumber\\
&  \qquad\qquad\times\left(  \int g_{\theta}\left(  Y_{i}|x_{i}\right)
p_{\theta}\left(  x_{i}|Y_{1:i-1};Y_{i+1:n}^{\epsilon} ; X_{1} = x \right)
\mu(dx_{i})\right)  ^{-1}\nonumber\\
&  -\sum_{i=1}^{n}\left(  \int\nabla_{\theta} g_{\theta}^{\epsilon}\left(
Y_{i}^{\epsilon}|x_{i}\right)  p_{\theta}\left(  x_{i}|Y_{1:i-1}%
;Y_{i+1:n}^{\epsilon} ; X_{1} = x \right)  \mu(dx_{i})\right. \nonumber\\
&  \qquad\qquad\qquad\qquad\left.  +\int g_{\theta}^{\epsilon}\left(
Y_{i}^{\epsilon}|x_{i}\right)  \nabla_{\theta} p_{\theta}\left(  x_{i}|Y_{1:i-1}%
;Y_{i+1:n}^{\epsilon} ; X_{1} = x \right)  \mu(dx_{i})\right) \nonumber\\
&  \qquad\qquad\times\left(  \int g_{\theta}^{\epsilon}\left(  Y_{i}%
^{\epsilon}|x_{i}\right)  p_{\theta}\left(  x_{i}|Y_{1:i-1};Y_{i+1:\epsilon} ;
X_{1} = x \right)  \mu(dx_{i})\right)  ^{-1}. \label{lemABCGradApproxeq1}%
\end{align}
In particular (\ref{lemABCGradApproxeq1}) holds true when $Y_{1}^{\epsilon
},\ldots,Y_{n}^{\epsilon}=Y_{1},\ldots,Y_{n}$ and so
(\ref{lemABCGradApproxeq2}) follows from (A3), (A5), (A6), (A7) and
(\ref{lemGradFiltStabeq4}).
\end{proof}

\begin{proof}
[Proof of Lemma \ref{asymmissinf}]Throughout this proof we shall assume that
the density of any finite collection of random variables from $\ldots
,Y_{0},Y_{1},\ldots$ and $\ldots,Y_{0}^{\epsilon},Y_{1}^{\epsilon},\ldots$\ is
computed assuming that the initial condition of the hidden state process has
the stationarity distribution $\bar{\mathbb{P}}_{\theta^{\ast}}$. We begin by
observing that from \cite{doumouryd2004} we have that
\begin{equation}
\begin{gathered} I ( \theta^{\ast} )= \lim_{n \to \infty} \frac{1}{n} \bar{\mathbb{E}}_{\theta^{\ast}} \left[ \nabla_{\theta} \log p_{\theta^{\ast}} \left( Y_{1} , \ldots , Y_{n} \right). \nabla_{\theta} \log p_{\theta^{\ast}} \left( Y_{1} , \ldots , Y_{n} \right)^{T} \right] , \\ I^{\epsilon} ( \theta^{\ast} )= \lim_{n \to \infty} \frac{1}{n} \bar{\mathbb{E}}_{\theta^{\ast}} \left[ \nabla_{\theta} \log p^{\epsilon}_{\theta^{\ast}} \left( Y_{1}^{\epsilon} , \ldots , Y_{n}^{\epsilon} \right) . \nabla_{\theta} \log p^{\epsilon}_{\theta^{\ast}} \left( Y_{1}^{\epsilon} , \ldots , Y_{n}^{\epsilon} \right)^{T} \right] . \end{gathered} \label{lemasymissinfpfeq203}%
\end{equation}
By the Fisher identity we have that for any $1\leq k<k^{\prime}$ and
subsequences $\left\{  j_{1},\ldots,j_{l}\right\}  \subset\left\{
1,\ldots,k\right\}  $ and $\left\{  j_{1}^{\prime},\ldots,j_{l^{\prime}%
}^{\prime}\right\}  \subset\left\{  k+1,\ldots,k^{\prime}\right\}  $ that
\begin{equation}
\nabla_{\theta}\log p_{\theta}\left(  Y_{j_{1:l}};Y_{j_{1:l^{\prime}}^{\prime}%
}^{\epsilon}\right)  =\mathbb{E}\left[  \nabla_{\theta}\log p_{\theta}\left(
Y_{1:k};Y_{k+1:k^{\prime}}^{\epsilon}\right)  |Y_{j_{1:l}};Y_{j_{1:l^{\prime}%
}^{\prime}}^{\epsilon}\right]  . \label{lemasymissinfpfeq201}%
\end{equation}
Further by construction of the perturbed process one can easily show that
given any $n$ and any subset $\left\{  j_{1},\ldots,j_{l}\right\}
\subset\left\{  1,\ldots,n\right\}  $ that
\begin{equation}
\nabla_{\theta}\log p_{\theta}\left(  Y_{1},\ldots,Y_{n}\right)  =\nabla_{\theta}\log p_{\theta
}\left(  Y_{1},\ldots,Y_{n};Y_{j_{1}}^{\epsilon},\ldots,Y_{j_{l}}^{\epsilon
}\right)  . \label{lemasymissinfpfeq1001}%
\end{equation}
Using \eqref{lemasymissinfpfeq201} and \eqref{lemasymissinfpfeq1001} it
follows that for any $1\leq i<n$
\begin{align*}
\lefteqn{\bar{\mathbb{E}}_{\theta^{\ast}}\left[  \nabla_{\theta}\log p_{\theta^{\ast}%
}\left(  Y_{1:i},Y_{i+1:n}^{\epsilon}\right)  .\nabla_{\theta}\log p_{\theta^{\ast}%
}\left(  Y_{1:i-1},Y_{i:n}^{\epsilon}\right)  ^{T}\right]  }\\
&  =\bar{\mathbb{E}}_{\theta^{\ast}}\left[  \nabla_{\theta}\log p_{\theta^{\ast}%
}\left(  Y_{1:i-1},Y_{i:n}^{\epsilon}\right)  .\nabla_{\theta}\log p_{\theta^{\ast}%
}\left(  Y_{1:i-1},Y_{i:n}^{\epsilon}\right)  ^{T}\right]  ,
\end{align*}
and hence that
\begin{align}
\lefteqn{
\bar{\mathbb{E}}_{\theta^{\ast}}\left[  \left(  \nabla_{\theta}\log
p_{\theta^{\ast}}\left(  Y_{1:i},Y_{i+1:n}^{\epsilon}\right)  -\nabla_{\theta}\log
p_{\theta^{\ast}}\left(  Y_{1:i-1},Y_{i:n}^{\epsilon}\right)  \right)
\right.
}
\nonumber\\
&  \qquad\qquad\left.  \cdot\left(  \nabla_{\theta}\log p_{\theta^{\ast}}\left(
Y_{1:i},Y_{i+1:n}^{\epsilon}\right)  -\nabla_{\theta}\log p_{\theta^{\ast}}\left(
Y_{1:i-1},Y_{i:n}^{\epsilon}\right)  \right)  ^{T}\right] \nonumber\\
&  =\bar{\mathbb{E}}_{\theta^{\ast}}\left[  \nabla_{\theta}\log p_{\theta^{\ast}%
}\left(  Y_{1:i},Y_{i+1:n}^{\epsilon}\right)  .\nabla_{\theta}\log p_{\theta^{\ast}%
}\left(  Y_{1:i},Y_{i+1:n}^{\epsilon}\right)  ^{T}\right] \nonumber\\
&  \qquad\qquad-\bar{\mathbb{E}}_{\theta^{\ast}}\left[  \nabla_{\theta}\log
p_{\theta^{\ast}}^{\epsilon}\left(  Y_{1:i-1},Y_{i:n}^{\epsilon}\right)
.\nabla_{\theta}\log p_{\theta^{\ast}}^{\epsilon}\left(  Y_{1:i-1},Y_{i:n}^{\epsilon
}\right)  ^{T}\right]  . \label{lemasymissinfpfeq3}%
\end{align}
Using (\ref{lemasymissinfpfeq3}) and (\ref{lemasymissinfpfeq203}) and
stationarity we have that
\begin{align}
& I(\theta^{\ast})-I^{\epsilon}(\theta^{\ast})  \nonumber \\
&=\lim_{n\rightarrow
\infty}\frac{1}{n}\sum_{i=1}^{n}\bar{\mathbb{E}}_{\theta^{\ast}}\left[
\bigg(\nabla_{\theta}\log p_{\theta^{\ast}}\left(  Y_{0}|Y_{1-i:-1};Y_{1:n-i}%
^{\epsilon}\right)   \right. \nonumber\\
&  \qquad\qquad\qquad\qquad\qquad\qquad\qquad-\nabla_{\theta}\log p_{\theta^{\ast}%
}\left(  Y_{0}^{\epsilon}|Y_{1-i:-1};Y_{1:n-i}^{\epsilon}\right)
\bigg)\nonumber\\
&  \qquad\qquad\qquad  \cdot \bigg(\nabla_{\theta}\log p_{\theta^{\ast}}\left(  Y_{0}\vert
Y_{1-i:-1};Y_{1:n-i}^{\epsilon}  \right)  \nonumber \\
& \left.  \qquad\qquad\qquad\qquad\qquad\qquad\qquad -\nabla_{\theta}\log p_{\theta
^{\ast}}\left(  Y_{0}^{\epsilon}|Y_{1-i:-1};Y_{1:n-i}^{\epsilon}
\right)
\bigg)^{T}\right] \nonumber\\
&  =\lim_{n\rightarrow\infty}\bar{\mathbb{E}}_{\theta^{\ast}}\left[
\bigg(\nabla_{\theta}\log p_{\theta^{\ast}}\left(  Y_{0}|Y_{-n:-1};Y_{1:n}^{\epsilon
}\right)
-\nabla_{\theta}\log p_{\theta^{\ast}}\left(  Y_{0}^{\epsilon}|Y_{-n:-1}%
;Y_{1:n}^{\epsilon}\right) \bigg) \right. \nonumber\\
&\qquad\qquad \quad \, \, \left.  \cdot\bigg(\nabla_{\theta}\log p_{\theta^{\ast}%
}\left(  Y_{0} \vert Y_{-n:-1};Y_{1:n}^{\epsilon}  \right)
-\nabla_{\theta}\log p_{\theta^{\ast}}\left(
Y_{0}^{\epsilon}\vert Y_{-n:-1};Y_{1:n}^{\epsilon}\right)  \bigg)^{T}\right].\nonumber \\
&\label{lemasymmissinfeq2}%
\end{align}
where the last equality follows from assumptions (A3) and (A5) and equations
\eqref{lemFiltStabStandardeq2}, \eqref{lemGradFiltStabeq4} and
\eqref{lemGradFiltStabeq1}. In addition, using \eqref{lemGradFiltStabeq201} and the
dominated convergence theorem, we conclude from \eqref{lemasymmissinfeq2} that

\begin{align}
\lefteqn{I(\theta^{\ast})-I^{\epsilon}(\theta^{\ast})=\bar{\mathbb{E}}%
_{\theta^{\ast}}\left[  \left(  G_{\theta^{\ast};Y_{-\infty:-1};Y_{1:\infty
}^{\epsilon}}(Y_{0})-G_{\theta^{\ast};Y_{-\infty:-1};Y_{1:\infty}^{\epsilon}%
}^{\epsilon}(Y_{0}^{\epsilon})\right)  \cdot\right.  }\nonumber\\
&  \qquad\qquad\qquad\qquad\qquad\qquad\left.  \left(  G_{\theta^{\ast
};Y_{-\infty:-1};Y_{1:\infty}^{\epsilon}}(Y_{0})-G_{\theta^{\ast}%
;Y_{-\infty:-1};Y_{1:\infty}^{\epsilon}}^{\epsilon}(Y_{0}^{\epsilon})\right)
^{T}\right]  \label{lemasymmissinfeq504}%
\end{align}
where for any sequence $\ldots,Y_{-1};Y_{1}^{\epsilon},\ldots$
\begin{align}
\lefteqn{G_{\theta^{\ast};Y_{-\infty:-1};Y_{1:\infty}^{\epsilon}}(Y_{0}%
)=\lim_{n\rightarrow\infty}\nabla_{\theta}\log p_{\theta^{\ast}}\left(  Y_{0}%
|Y_{-n:-1};Y_{1:n}^{\epsilon}\right)  }\nonumber\\
&  \qquad=\left(  \int\bigg(\nabla_{\theta} g_{\theta^{\ast}}\left(  Y_{0}%
|x_{0}\right)  p_{\theta^{\ast}}\left(  x_{0} \vert Y_{-\infty : -1};Y_{1: \infty}^{\epsilon
} \right)  \right. \nonumber\\
&  \qquad\qquad\qquad\qquad\qquad\qquad\left.  +g_{\theta^{\ast}}\left(
Y_{0}|x_{0}\right)  \bar{\nabla_{\theta}}p_{\theta^{\ast}}\left(  x_{0} \vert Y_{-\infty : -1};Y_{1: \infty}^{\epsilon
} \right)  \bigg)\mu(dx_{0})\right) \nonumber\\
&  \qquad\qquad\qquad\times\left(  \int g_{\theta^{\ast}}\left(  Y_{0}%
|x_{0}\right)  p_{\theta^{\ast}}\left(  x_{0} \vert Y_{-\infty : -1};Y_{1: \infty}^{\epsilon
} \right)  \mu(dx_{0})\right)  ^{-1} \label{lemasymmissinfeq5001}%
\end{align}
and
\begin{align}
\lefteqn{G_{\theta^{\ast};Y_{-\infty:-1};Y_{1:\infty}^{\epsilon}}^{\epsilon
}(Y_{0}^{\epsilon})=\lim_{n\rightarrow\infty}\nabla_{\theta}\log p_{\theta^{\ast}%
}\left(  Y_{0}^{\epsilon}|Y_{-n:-1};Y_{1:n}^{\epsilon}\right)  }\nonumber\\
&  \qquad=\left(  \int\bigg(\nabla_{\theta} g_{\theta^{\ast}}^{\epsilon}\left(
Y_{0}^{\epsilon}|x_{0}\right)  p_{\theta^{\ast}}\left(  x_{0}|Y_{-\infty
:-1};Y_{1:\infty}^{\epsilon}\right)  \right. \nonumber\\
&  \qquad\qquad\qquad\qquad\qquad\qquad\left.  +g_{\theta^{\ast}}^{\epsilon
}\left(  Y_{0}^{\epsilon}|x_{0}\right)  \bar{\nabla_{\theta}}p_{\theta^{\ast}}\left(
x_{0}|Y_{-\infty:-1};Y_{1:\infty}^{\epsilon}\right)  \bigg)\mu(dx_{0})\right)
\nonumber\\
&  \qquad\qquad\qquad\times\left(  \int g_{\theta^{\ast}}^{\epsilon}\left(
Y_{0}^{\epsilon}|x_{0}\right)  p_{\theta^{\ast}}\left(  x_{0}|Y_{-\infty
:-1};Y_{1:\infty}^{\epsilon}\right)  \mu(dx_{0})\right)  ^{-1}.
\label{lemasymmissinfeq5002}%
\end{align}
Further by using assumptions (A3), (A5) and \eqref{lemFiltStabStandardeq2},
\eqref{lemGradFiltStabeq201}, \eqref{lemGradFiltStabeq4} and
\eqref{lemGradFiltStabeq305} we have that the conditional likelihood functions
$\log p_{\theta^{\ast}}\left(  Y_{0}|Y_{-n:-1};Y_{1:n}^{\epsilon}\right)  $
and $\log p_{\theta^{\ast}}\left(  Y_{0}^{\epsilon}|Y_{-n:-1};Y_{1:n}%
^{\epsilon}\right)  $ as well as their derivatives are bounded uniformly in
$\theta$ and $\ldots,Y_{-1};Y_{1}^{\epsilon},\ldots$ and that the derivatives
are uniformly Cauchy in $n$ whilst the conditional likelihoods themselves
converge uniformly to the quantities $\log p_{\theta^{\ast}}\left(
Y_{0}|Y_{-\infty:-1};Y_{1:\infty}^{\epsilon}\right)  $ and $\log
p_{\theta^{\ast}}\left(  Y_{0}^{\epsilon}|Y_{-\infty:-1};Y_{1:\infty
}^{\epsilon}\right)  $. Hence we can apply Lemma \ref{lemrealanalresstand} to obtain
\begin{equation}
\begin{gathered} G_{\theta^{\ast}; Y_{-\infty:-1};Y_{1:\infty}^{\epsilon} } ( Y_{0} ) = \nabla_{\theta} \log p_{\theta^{\ast}} (Y_{0} \vert Y_{-\infty:-1};Y_{1:\infty}^{\epsilon} ), \\ G^{\epsilon}_{\theta^{\ast}; Y_{-\infty:-1};Y_{1:\infty}^{\epsilon} } ( Y^{\epsilon}_{0} ) = \nabla_{\theta} \log p_{\theta^{\ast}} (Y^{\epsilon}_{0} \vert Y_{-\infty:-1};Y_{1:\infty}^{\epsilon} ). \end{gathered} \label{lemasymmissinfeq501}%
\end{equation}
It now follows from \eqref{lemasymmissinfeq504} and
\eqref{lemasymmissinfeq501} that
\begin{align}
&I(\theta^{\ast})-I^{\epsilon}(\theta^{\ast})
\nonumber\label{lemasymissinfpfeq2001}\\
&=\bar{\mathbb{E}}_{\theta^{\ast}} \bigg[  \Big(\nabla_{\theta}\log p_{\theta^{\ast}%
}(Y_{0}|Y_{-\infty:-1};Y_{1:\infty}^{\epsilon})-\nabla_{\theta}\log p_{\theta^{\ast}%
}(Y_{0}^{\epsilon}|Y_{-\infty:-1};Y_{1:\infty}^{\epsilon})\Big) \cdot
\nonumber\\
&\qquad\qquad\quad   \Big(\nabla_{\theta}\log p_{\theta^{\ast}}(Y_{0}%
|Y_{-\infty:-1};Y_{1:\infty}^{\epsilon})-\nabla_{\theta}\log p_{\theta^{\ast}}%
(Y_{0}^{\epsilon}|Y_{-\infty:-1};Y_{1:\infty}^{\epsilon})\Big)^{T} \bigg]
\nonumber\\
&=\bar{\mathbb{E}}_{\theta^{\ast}}\bigg[  \bar{\mathbb{E}}_{\theta^{\ast}%
}\bigg[  \Big(\nabla_{\theta}\log p_{\theta^{\ast}}(Y_{0}|Y_{-\infty:-1};Y_{1:\infty
}^{\epsilon})-\nabla_{\theta}\log p_{\theta^{\ast}}(Y_{0}^{\epsilon}|Y_{-\infty
:-1};Y_{1:\infty}^{\epsilon})\Big) \cdot  \nonumber\\
& \Big(\nabla_{\theta}\log p_{\theta^{\ast}}(Y_{0}|Y_{-\infty
:-1};Y_{1:\infty}^{\epsilon})-\nabla_{\theta}\log p_{\theta^{\ast}}(Y_{0}^{\epsilon
}|Y_{-\infty:-1};Y_{1:\infty}^{\epsilon})\Big)^{T}|Y_{-\infty:-1}%
;Y_{1:\infty}^{\epsilon} \bigg]  \bigg]  .\nonumber\\
&
\end{align}
Finally by applying the Fisher inequality \eqref{lemasymissinfpfeq201} and
\eqref{lemasymissinfpfeq1001} to the conditional laws $\mathbb{P}%
_{\theta^{\ast}}(Y_{0}|Y_{-\infty:-1};Y_{1:\infty}^{\epsilon})$ and
$\mathbb{P}_{\theta^{\ast}}(Y_{0}^{\epsilon}|Y_{-\infty:-1};Y_{1:\infty
}^{\epsilon})$ we get that
\begin{align}
\lefteqn{\bar{\mathbb{E}}_{\theta^{\ast}}\left[  \Big(\nabla_{\theta}\log
p_{\theta^{\ast}}(Y_{0}|Y_{-\infty:-1};Y_{1:\infty}^{\epsilon})-\nabla_{\theta}\log
p_{\theta^{\ast}}(Y_{0}^{\epsilon}|Y_{-\infty:-1};Y_{1:\infty}^{\epsilon
})\Big)\cdot\right.  }\nonumber\label{lemasymissinfpfeq2002}\\
&  \left.  \Big(\nabla_{\theta}\log p_{\theta^{\ast}}(Y_{0}|Y_{-\infty:-1}%
;Y_{1:\infty}^{\epsilon})-\nabla_{\theta}\log p_{\theta^{\ast}}(Y_{0}^{\epsilon
}|Y_{-\infty:-1};Y_{1:\infty}^{\epsilon})\Big)^{T}|Y_{-\infty:-1}%
;Y_{1:\infty}^{\epsilon}\right] \nonumber\\
&  \qquad=\bar{\mathbb{E}}_{\theta^{\ast}}\bigg[\nabla_{\theta}\log p_{\theta^{\ast}%
}(Y_{0}|Y_{-\infty:-1};Y_{1:\infty}^{\epsilon})\cdot\nonumber\\
&  \qquad\qquad\qquad\qquad\qquad\qquad\qquad\nabla_{\theta}\log p_{\theta^{\ast}%
}(Y_{0}|Y_{-\infty:-1};Y_{1:\infty}^{\epsilon})^{T}|Y_{-\infty:-1}%
;Y_{1:\infty}^{\epsilon}\bigg]\nonumber\\
&  \qquad-\bar{\mathbb{E}}_{\theta^{\ast}}\bigg[\nabla_{\theta}\log p_{\theta^{\ast}%
}(Y_{0}^{\epsilon}|Y_{-\infty:-1};Y_{1:\infty}^{\epsilon})\cdot\nonumber\\
&  \qquad\qquad\qquad\qquad\qquad\qquad\qquad\nabla_{\theta}\log p_{\theta^{\ast}%
}(Y_{0}^{\epsilon}|Y_{-\infty:-1};Y_{1:\infty}^{\epsilon})^{T}|Y_{-\infty
:-1};Y_{1:\infty}^{\epsilon}\bigg]\nonumber\\
&  \qquad:=I_{Y_{-\infty:-1};Y_{1:\infty}^{\epsilon}}^{Y_{0}:Y_{0}^{\epsilon}%
}(\theta^{\ast}).\nonumber\\
&
\end{align}
The result now follows from \eqref{lemasymissinfpfeq2001} and \eqref{lemasymissinfpfeq2002}.
\end{proof}

\begin{rem}
\label{corasymmissinf}Assume (A1)-(A5).  Then in exactly the same way as one proves Lemma \ref{asymmissinf} one may prove that
\[
I(\theta^{\ast})=I^{\epsilon}(\theta^{\ast})+\bar{\mathbb{E}}_{\theta^{\ast}%
}\left[  I_{Y_{-\infty:-1};Y_{m:\infty}^{\epsilon}}^{Y_{0:m-1}:Y_{0:m-1}%
^{\epsilon}}(\theta^{\ast})\right]
\]
where for any pair of sequences
$Y_{-\infty:-1}$ and $Y_{1:\infty}^{\epsilon}$ and any integer $m \geq 1$
\begin{align}
\lefteqn{ I_{Y_{-\infty:-1};Y_{m:\infty}^{\epsilon}}^{Y_{0:m-1}:Y_{0:m-1}^{\epsilon}%
}(\theta^{\ast}) } \nonumber\\
&  \qquad = \frac{1}{m} \bar{\mathbb{E}}_{\theta^{\ast}}\bigg[ \nabla_{\theta}\log p_{\theta
^{\ast}}(Y_{0:m-1}|Y_{-\infty:-1};Y_{m:\infty}^{\epsilon})\cdot \nonumber \\
&  \qquad\qquad\qquad\qquad\qquad  \nabla_{\theta}\log p_{\theta^{\ast}%
}(Y_{0:m-1}|Y_{-\infty:-1};Y_{m:\infty}^{\epsilon})^{T}|Y_{-\infty
:-1};Y_{m:\infty}^{\epsilon}\bigg]\nonumber\\
&  \qquad - \frac{1}{m} \bar{\mathbb{E}}_{\theta^{\ast}}\bigg[ \nabla_{\theta}\log
p_{\theta^{\ast}}(Y_{0:m-1}^{\epsilon}|Y_{-\infty:-1};Y_{m:\infty}^{\epsilon
})\cdot\nonumber\\
&  \qquad\qquad\qquad\qquad\qquad \nabla_{\theta}\log p_{\theta^{\ast}%
}(Y_{0:m-1}^{\epsilon}|Y_{-\infty:-1};Y_{m:\infty}^{\epsilon})^{T}%
|Y_{-\infty:-1};Y_{m:\infty}^{\epsilon}\bigg] . \nonumber \\
& \label{corasymmissinfeq1}
\end{align}
\end{rem}

\begin{proof}[Proof of Lemma \ref{lemABCMLEcomp}]
We begin by establishing part 1. From \eqref{lemnoiseinflosspfeq1} and
\eqref{lemasymissinfpfeq2002} we have for any $Y_{-\infty:-1}$ and
$Y_{1:\infty}^{\epsilon}$ that $I_{Y_{-\infty:-1};Y_{1:\infty}^{\epsilon}}%
^{Y_{0}:Y_{0}^{\epsilon}}(\theta^{\ast})\geq0$ from which the first assertion of part 1 immediately follows. In order to prove the second assertion of part 1 we note that it is sufficient to prove that under the assumption of connectivity we must have that $\bar{\mathbb{E}}%
_{\theta^{\ast}}\left[  I_{Y_{-\infty:-1};Y_{1:\infty}^{\epsilon}}%
^{Y_{0}:Y_{0}^{\epsilon}}(\theta^{\ast})\right]  =0$ implies $I(\theta^{\ast})=0$.  Since we have by Remark \ref{corasymmissinf} that $\bar{\mathbb{E}}%
_{\theta^{\ast}}\left[  I_{Y_{-\infty:-1};Y_{1:\infty}^{\epsilon}}%
^{Y_{0}:Y_{0}^{\epsilon}}(\theta^{\ast})\right]  = \bar{\mathbb{E}}_{\theta^{\ast
}}\left[  I_{Y_{-\infty:-1};Y_{m:\infty}^{\epsilon}}^{Y_{0:m-1}:Y_{0:m-1}%
^{\epsilon}}(\theta^{\ast})\right] $ for all $m \geq 1$ this will follow once we show that $\bar{\mathbb{E}}_{\theta^{\ast
}}\left[  I_{Y_{-\infty:-1};Y_{m:\infty}^{\epsilon}}^{Y_{0:m-1}:Y_{0:m-1}%
^{\epsilon}}(\theta^{\ast})\right] = 0$ for all $m \geq 1$ implies $I(\theta^{\ast})=0$.

Observe that by Lemmas
\ref{remmeasureconnectedness} and \ref{lemmeasureconnectedness} and the assumption that $\nu$ is connected it follows that $\left(  \nu\ast\mathcal{U}%
_{B_{0}^{\epsilon}}\right)  ^{\otimes m}$ is connected for all $\epsilon>0$
and $m \geq 1$ and thus from (A3) and \eqref{EqnPertCondLaw} that the conditional laws
$\mathbb{P}_{\theta^{\ast}}(Y_{0:m-1}|Y_{-\infty:-1};Y_{m:\infty}^{\epsilon})$
and $\mathbb{P}_{\theta^{\ast}}(Y_{0:m-1}^{\epsilon}|\ Y_{-\infty
:-1};Y_{m:\infty}^{\epsilon})$ are also connected for all $\epsilon>0$ and sequences
$Y_{-\infty:-1}$ and $Y_{m:\infty}^{\epsilon}$.  It then follows from \eqref{lemnoiseinflosspfeq1} and \eqref{corasymmissinfeq1} that for all $m \geq 1$ that $\bar{\mathbb{E}}_{\theta^{\ast
}}\left[  I_{Y_{-\infty:-1};Y_{m:\infty}^{\epsilon}}^{Y_{0:m-1}:Y_{0:m-1}%
^{\epsilon}}(\theta^{\ast})\right]  =0$ implies that
\begin{align*}
&  \bar{\mathbb{E}}_{\theta^{\ast}}\bigg[\nabla_{\theta}\log p_{\theta^{\ast}%
}(Y_{0:m-1}|Y_{-\infty:-1};Y_{m:\infty}^{\epsilon})\cdot\\
&  \qquad\qquad\qquad\qquad\qquad \nabla_{\theta}\log p_{\theta^{\ast}%
}(Y_{0:m-1}|Y_{-\infty:-1};Y_{m:\infty}^{\epsilon})^{T}|Y_{-\infty
:-1};Y_{m:\infty}^{\epsilon}\bigg]\\
&  \qquad=\bar{\mathbb{E}}_{\theta^{\ast}}\bigg[\nabla_{\theta}\log p_{\theta^{\ast}%
}(Y_{0:m-1}^{\epsilon}|Y_{-\infty:-1};Y_{m:\infty}^{\epsilon})\cdot\\
&  \qquad\qquad\qquad\qquad\qquad \nabla_{\theta}\log p_{\theta^{\ast}%
}(Y_{0:m-1}^{\epsilon}|Y_{-\infty:-1};Y_{m:\infty}^{\epsilon})^{T}%
|Y_{-\infty:-1};Y_{m:\infty}^{\epsilon}\bigg]
\end{align*}
for all $Y_{-\infty:-1}$,$Y_{m:\infty}^{\epsilon}$ a.s. and hence by Lemma
\ref{lemnoiseinfloss} that
\[
\nabla_{\theta}\log p_{\theta^{\ast}}(Y_{0:m-1}|Y_{-\infty:-1};Y_{m:\infty}^{\epsilon
})=0
\]
$p_{\theta^{\ast}}(Y_{0:m-1}|Y_{-\infty:-1};Y_{m:\infty}^{\epsilon})$ a.s. and thus by
Fisher's identity applied to the conditional probabilities that
\begin{equation} \label{extralabelforv510revisiontom1}
\nabla_{\theta}\log p_{\theta^{\ast}}(Y_{0}|Y_{-\infty:-1};Y_{m:\infty}^{\epsilon})=0
\end{equation}
$p_{\theta^{\ast}}(Y_{0}|Y_{-\infty:-1};Y_{m:\infty}^{\epsilon})$ a.s. also.  Finally observe that one can derive expressions  for the gradients of the conditional densities $p_{\theta^{\ast}}(Y_{0}|Y_{-\infty:-1};Y_{m:\infty}^{\epsilon})$ and $p_{\theta^{\ast}}(Y_{0}|Y_{-\infty:-1})$ analogous to \eqref{lemasymmissinfeq5001} and \eqref{lemasymmissinfeq501}.  It then follows from these and (A3), (A5), \eqref{extralabelforv510revisiontom3}, \eqref{lemFiltStabeq5}, \eqref{extralabelforv510revisiontom4} and \eqref{remcorGradDiffRateextensionEq} that
\begin{equation} \label{extralabelforv510revisiontom2}
\nabla_{\theta}\log p_{\theta^{\ast}}(Y_{0}|Y_{-\infty:-1}) = \lim_{m \to \infty} \nabla_{\theta}\log p_{\theta^{\ast}}(Y_{0}|Y_{-\infty:-1};Y_{m:\infty}^{\epsilon})
\end{equation}
a.s..  It now follows from \eqref{extralabelforv510revisiontom1} and \eqref{extralabelforv510revisiontom2} that if $\bar{\mathbb{E}}_{\theta^{\ast
}}\left[  I_{Y_{-\infty:-1};Y_{m:\infty}^{\epsilon}}^{Y_{0:m-1}:Y_{0:m-1}%
^{\epsilon}}(\theta^{\ast})\right]  =0$ for all $m \geq 1$ then $\nabla_{\theta}\log p_{\theta^{\ast}}(Y_{0}|Y_{-\infty:-1})=0$ a.s. and hence that $I(\theta^{\ast})=0$.

Next we establish part 2. Recall that given a positive semi-definite matrix
$M\in\mathbb{R}^{d\times d}$ and a sequence of $\mathbb{R}^{d\times d}$ valued
positive semi-definite matrices $\left\{  M_{n}\right\}  _{n\geq1}$ that
$M_{n}\rightarrow M$ if and only if $v^{T}M_{n}v\rightarrow v^{T}Mv$ for all
$v\in\mathbb{R}^{d}$. Thus in order to prove part 2 it is sufficient to show
that for every sequence $\epsilon_{n}\nearrow\infty$ and every $v\in
\mathbb{R}^{m}$ that $v^{T}\bar{\mathbb{E}}_{\theta^{\ast}}\left[
I_{Y_{-\infty:-1};Y_{1:\infty}^{\epsilon}}^{Y_{0}:Y_{0}^{\epsilon}}%
(\theta^{\ast})\right]  v\rightarrow v^{T}I\left(  \theta^{\ast}\right)  v$.
By definition and stationarity
\begin{align}
\lefteqn{v^{T}\bar{\mathbb{E}}_{\theta^{\ast}}\left[  I_{Y_{-\infty
:-1};Y_{1:\infty}^{\epsilon}}^{Y_{0}:Y_{0}^{\epsilon}}(\theta^{\ast})\right]
v}\nonumber\label{lemABCMLEcomppfeq501}\\
&  =\mathbb{E}\left[  \left(  v^{T}.\left(  \nabla_{\theta}\log p_{\theta^{\ast}%
}\left(  Y_{0}|Y_{-\infty:-1};Y_{1:\infty}^{\epsilon}\right)  -\nabla_{\theta}\log
p_{\theta^{\ast}}\left(  Y_{0}^{\epsilon}|Y_{-\infty:-1};Y_{1:\infty
}^{\epsilon}\right)  \right)  \right)  ^{2}\right] \nonumber\\
&
\end{align}
and
\begin{equation}
v^{T}Iv=\mathbb{E}\left[  \left(  v^{T}.\nabla_{\theta}\log p_{\theta^{\ast}}\left(
Y_{0}|Y_{-\infty:-1}\right)  \right)  ^{2}\right]  .
\label{lemABCMLEcomppfeq502}%
\end{equation}
Further by assumptions (A3) and (A5) and (\ref{lemGradFiltStabeq4}) we have
that there exists some $K<\infty$ such that
\begin{equation}
\begin{gathered} v^{T} . \nabla_{\theta}\log p_{\theta^{\ast}} \left( Y_{0} \vert Y_{-\infty:-1} \right) , \, v^{T} . \nabla_{\theta}\log p_{\theta^{\ast}} \left( Y_{0} \vert Y_{-\infty:-1}; Y^{\epsilon}_{1:\infty} \right) , \\ v^{T} . \nabla_{\theta}\log p_{\theta^{\ast}} \left( Y^{\epsilon}_{0} \vert Y_{-\infty:-1}; Y^{\epsilon}_{1:\infty} \right) \leq K \label{lemABCMLEcomppfeq503} \end{gathered}
\end{equation}
a.s. for all $\epsilon>0$ and sequences $\ldots,Y_{-1};Y_{1}^{\epsilon}%
,\ldots$. The proof of the second part of the result will follow from
\eqref{lemABCMLEcomppfeq501}, \eqref{lemABCMLEcomppfeq502} and
\eqref{lemABCMLEcomppfeq503} once we show that for any $\ldots,Y_{-1}%
;Y_{1}^{\epsilon},\ldots$
\begin{equation}
\begin{gathered} \nabla_{\theta} \log p_{\theta^{\ast}} \left( Y^{\epsilon}_{0} \vert Y_{-\infty:-1}; Y^{\epsilon}_{1:\infty} \right) \to 0 , \\ \nabla_{\theta} \log p_{\theta^{\ast}} \left( Y_{0} \vert Y_{-\infty:-1}; Y^{\epsilon}_{1:\infty} \right) \to \nabla_{\theta} \log p_{\theta^{\ast}} \left( Y_{0} \vert Y_{-\infty:-1} \right) \end{gathered} \label{lemABCMLEcomppfeq504}%
\end{equation}
as $\epsilon\rightarrow\infty$ in $p_{\theta^{\ast}}\left(  Y_{0}%
|Y_{-\infty:-1};Y_{1:\infty}^{\epsilon}\right)  $ probability. The first part
of \eqref{lemABCMLEcomppfeq504} is a straightforward consequence of applying
Lemma \ref{lemlargenoiseinfasym} to the conditional laws $\mathbb{P}%
_{\theta^{\ast}}\left(  Y_{0}|Y_{-\infty:-1};Y_{1:\infty}^{\epsilon}\right)
$. To establish the second part of \eqref{lemABCMLEcomppfeq504} observe that
from \eqref{lemFiltStabStandardeq2}, \eqref{lemGradFiltStabeq201},
\eqref{lemGradFiltStabeq305}, \eqref{lemasymmissinfeq501} and
\eqref{lemasymmissinfeq5001} we have that there exists some $C<\infty$ such
that for all $\ldots,Y_{-1};Y_{1}^{\epsilon},\ldots$
\begin{equation}
\begin{gathered} \left\vert \nabla_{\theta}\log p_{\theta^{\ast}} \left( y \vert Y_{-\infty:-1}; Y^{\epsilon}_{1:\infty} \right) - \nabla_{\theta}\log p_{\theta^{\ast}} \left( y \vert Y_{-n:-1}; Y^{\epsilon}_{1:n} \right) \right\vert \leq C \rho^{n / 2} \\ \left\vert \nabla_{\theta}\log p_{\theta^{\ast}} \left( y \vert Y_{-\infty:-1}\right) - \nabla_{\theta}\log p_{\theta^{\ast}} \left( y \vert Y_{-n:-1} \right) \right\vert \leq C \rho^{n / 2} \end{gathered} \label{lemABCMLEcomppfeq601}%
\end{equation}
for all $n\geq1$, $\epsilon>0$ and $y\in\mathcal{Y}$. It then follows from
(A3), (A5) and \eqref{lemGradFiltStabeq4}, the representation of the score
functions $\nabla_{\theta}\log p_{\theta^{\ast}}\left(  \cdot|\cdots\right)  $ given in
\eqref{lemABCGradApproxeq1}, the representation of integrals w.r.t.~the filter
gradients given in \eqref{lemGradFiltStabeq5}-\eqref{lemGradFiltStabeq8} and
Lemma \ref{lemcondprobsatbasepsilongotozero} that
\begin{equation}
\nabla_{\theta}\log p_{\theta^{\ast}}\left(  Y_{0}|Y_{-n:-1};Y_{1:k}^{\epsilon}\right)
\rightarrow\nabla_{\theta}\log p_{\theta^{\ast}}\left(  Y_{0}|Y_{-n:-1}\right)
\label{lemABCMLEcomppfeq602}%
\end{equation}
in probability as $\epsilon\nearrow\infty$. One can then conclude that the
second part of \eqref{lemABCMLEcomppfeq504} holds via
\eqref{lemABCMLEcomppfeq601} and \eqref{lemABCMLEcomppfeq602}. In order to
complete the proof of the lemma recall the two random variables $G_{\theta
^{\ast};Y_{-\infty:-1};Y_{1:\infty}^{\epsilon}}(Y_{0})$ and $G_{\theta^{\ast
};Y_{-\infty:-1};Y_{1:\infty}^{\epsilon}}^{\epsilon}(Y_{0}^{\epsilon})$
defined in \eqref{lemasymmissinfeq5001} and \eqref{lemasymmissinfeq5002}. It
follows from (A3), (A5) and the Lebesgue differentiation theorem that
$G_{\theta^{\ast};Y_{-\infty:-1};Y_{1:\infty}^{\epsilon}}(Y_{0})\rightarrow
G_{\theta^{\ast};Y_{-\infty:-1};Y_{1:\infty}^{\epsilon}}^{\epsilon}%
(Y_{0}^{\epsilon})$ a.s. as $\epsilon\searrow0$. Since it follows from the
proof of \eqref{lemasymmissinfeq501} that there exists some $K<\infty$ such
that for all $\epsilon>0$ the functions $G_{\theta^{\ast};Y_{-\infty
:-1};Y_{1:\infty}^{\epsilon}}(Y_{0})$ and $G_{\theta^{\ast};Y_{-\infty
:-1};Y_{1:\infty}^{\epsilon}}^{\epsilon}(Y_{0}^{\epsilon})$ are bounded above
by $K$ for all $\ldots,Y_{-1},Y_{0};Y_{1}^{\epsilon},\ldots$ we have that part
3 follows from \eqref{lemasymissinfpfeq2002} and a simple application of the
dominated convergence theorem. Finally, part 4 is a trivial consequence of
\eqref{lemasymmissinfeq504}, \eqref{lemasymmissinfeq5001} and
\eqref{lemasymmissinfeq5002} and assumptions (A3), (A6) and (A7).
\end{proof}

\bigskip

\bibliographystyle{imsart-nameyear}
\bibliography{references}

\end{document}